\documentclass[12pt,a4paper]{article}  % -*- coding: utf-8 -*-
\usepackage[a4paper,margin=2.5cm]{geometry}
\usepackage[english]{babel}
\usepackage[utf8]{inputenc}
\usepackage[T1]{fontenc}
\usepackage{lmodern}
\usepackage{newtxtext}
\usepackage{amsmath}
\usepackage{amsfonts}
\usepackage{amssymb}
\usepackage{amsthm}
\usepackage{mathrsfs}
\usepackage{stmaryrd}
\usepackage{wasysym}
\usepackage{pifont}
\usepackage{url}
\usepackage{graphicx}
\usepackage[usenames,dvipsnames]{xcolor}
\usepackage{hyperref}
\usepackage{cleveref}
\usepackage{enumitem}
\setlist[itemize]{nosep}
\setlist[enumerate]{nosep}
\theoremstyle{definition}
\newtheorem{comcnt}{Anything}[section]
\newcommand\thingy{%
\refstepcounter{comcnt}\medskip\noindent\textparagraph\textbf{\thecomcnt.} }
\newtheorem{dfn}[comcnt]{Definition}
\newtheorem{prop}[comcnt]{Proposition}
\newtheorem{thm}[comcnt]{Theorem}
\newtheorem{cor}[comcnt]{Corollary}
\newtheorem{rmk}[comcnt]{Remark}
\newtheorem{warn}[comcnt]{Warning}
\newtheorem{lem}[comcnt]{Lemma}
\newtheorem{exm}[comcnt]{Example}
\newtheorem{exer}[comcnt]{Exercise}
\theoremstyle{plain}
\newtheorem{ques}[comcnt]{Question}
\mathchardef\emdash="07C\relax
\mathchardef\hyphen="02D\relax
\DeclareUnicodeCharacter{00A0}{~}
\DeclareUnicodeCharacter{A76B}{z}
\newcommand{\Om}{{\boldsymbol{\omega}}}
\newcommand{\N}{\mathbb{N}}
\renewcommand{\P}{\mathcal{P}}
\newcommand{\D}{\mathscr{D}}
\newcommand{\op}{{\operatorname{op}}}
\renewcommand{\iff}{\Leftrightarrow} % make it shorter
\newcommand{\id}{\operatorname{id}}
\newcommand{\Hom}{\operatorname{Hom}}
\newcommand{\coT}{\operatorname{coT}}
\newcommand{\class}{\operatorname{class}}
\newcommand{\sh}{\operatorname{sh}}
\newcommand{\Sh}{\mathbf{Sh}}
\newcommand{\TZero}{\mathrm{T0}}
\newcommand{\TOne}{\mathrm{T1}}
\newcommand{\TTwo}{\mathrm{T2}}
\newcommand{\TThree}{\mathrm{T3}}
\newcommand{\meetDZero}{\mathbin{\wedge_{\mathrm{T0}}}}
\newcommand{\meetDOne}{\mathbin{\wedge_{\mathrm{T1}}}}
\newcommand{\meetDTwo}{\mathbin{\wedge_{\mathrm{T2}}}}
\newcommand{\meetDThree}{\mathbin{\wedge_{\mathrm{T3}}}}
\newcommand{\meetTOne}{\mathbin{\curlywedge_{\mathrm{T1}}}}
\newcommand{\meetTTwo}{\mathbin{\curlywedge_{\mathrm{T2}}}}
\newcommand{\meetTThree}{\mathbin{\curlywedge_{\mathrm{T3}}}}
\newcommand{\redTOne}{\mathrel{\preceq_{\mathrm{T1}}}}

\newcommand{\redTThree}{\mathrel{\preceq_{\mathrm{T3}}}}
\newcommand{\Psymb}{\mathtt{P}}
\newcommand{\Qsymb}{\mathtt{Q}}
\title{An order-reversing embedding of Turing degrees into Arthur–Nimue–Merlin degrees}
\author{
  Jean \textsc{Abou Samra}\footnote{
    Eötvös Loránd University, Budapest, Hungary.
    Funded by the European Union through the HOTT grant (ERC 101170308).
  }
  \hphantom{*}
  \& David Alexander \textsc{Madore}\footnote{
    Télécom Paris (LTCI) and Institut Polytechnique de Paris, Palaiseau, France.}
}
\begin{document}
\pretolerance=8000
\tolerance=50000
\selectlanguage{english}
\maketitle

% \begin{center}
% {\tiny\noindent
% \immediate\write18{sh ./vc > vcline.tex}
% Git: \input{vcline.tex}
% \immediate\write18{echo ' (stale)' >> vcline.tex}
% \par}
% \end{center}

\begin{abstract}
The Arthur–Nimue–Merlin degrees are a generalization of the Turing
degrees introduced by \textsc{Kihara} as a tangible description of the
partially ordered set of Lawvere–Tierney topologies on the effective
topos (equivalently, subtoposes of the effective topos).  They are
defined in terms of a three-player game that introduces both angelic
and demonic non-determinism into oracle queries.  We construct an
order embedding of the Turing degrees with their order reversed into
the Arthur–Nimue–Merlin degrees, whose image we call the “co-Turing
degrees”; we then study the order relationship of these co-Turing
degrees with the (naturally embedded) Turing degrees within the
Arthur–Nimue–Merlin degrees.
\end{abstract}

\begin{narrower}
\footnotesize
\textbf{MSC} (2020): 03D30, 03G30, 03F65
\end{narrower}

%
%
%

% Notational conventions:
% The argument to f is called m, and for a T3 oracle, F∈f(m).
% The return value of f is called u (so for a T3 oracle, u∈F).
% The argument to g is called n, and for a T3 oracle, G∈g(n).
% The return value of g is called v (so for a T3 oracle, v∈G).

% Try to speak of querying the oracle g “for” some n (rather than
% using the word “question”), and try to reserve the term “value” for
% the return value of the oracle.

\section{Introduction}

\thingy\label{intro-riddles} We begin with a riddle.  King Arthur is
desperate to know whether the Grail is in a particular French castle.
He can ask arbitrary questions to the all-knowing mage Merlin, who
promises to answer truthfully, but with a twist.  Instead of merely
answering “yes” or “no” in plain English, the mischievous Merlin
provides a Turing machine, which halts if and only if the answer is
“yes”.  Can Arthur, a mere mortal limited by the Church–Turing thesis,
find the Grail's location before the end of days?

% (This is a reference to the taunting French knights in the movie
% “Monty Pythons and the Holy Grail”, who claim to have a Grail in
% their castle.)

Indeed he can, with a simple trick: first ask Merlin whether the Grail
is in the castle, then ask whether it is \emph{not} in the castle.
Exactly one of the two resulting Turing machines halts, and he can run
them in parallel to know which one, revealing the answer.

Let us spice up the rules.  Each of Merlin's replies now consists of
\emph{two} Turing machines; for a “yes” answer, both or none halt,
whereas for a “no” answer, exactly one halts.\footnote{We are indebted
to Will \textsc{Sawin} for suggesting this particularly elegant
variant of the riddle.} It turns out that, under these rules, no
strategy can ensure that Arthur learns the answer to an arbitrary
boolean question in finite time.  A proof of this fact (which we give
in ¶\ref{answer-to-riddle}) seems to require the main technical result of this
paper, which shows that asking the question then its negation is
essentially the only useful trick in a precise sense.

\thingy\label{intro-context}
\textbf{Context.} Our object of study is an extension of Turing degrees where
the oracles can feature certain forms of non-determinism, as in this
riddle. Our motivations and ideas are grounded in the study of realizability,
where this extension naturally arises. Nevertheless, this paper is
meant to be accessible to an audience of classical computability
theorists without a background in realizability, except for
\cref{section-topos-theory} where we explain how the proof of our main
result can be understood in topos-theoretic terms.
Still, let us briefly recount how the notion arose.

The effective topos, defined by Hyland, is a model of constructive
mathematics — more precisely, an (elementary) topos\footnote{In this
paper, the bare word \emph{topos} always means \emph{elementary topos}
rather than \emph{Grothendieck topos}.} with natural
numbers object and thus a model of intuitionistic higher-order
logic — which can be viewed as the “world of computable mathematics”.
This topos contains the topos of sets, namely the “world of classical
mathematics”, as a smallest non-degenerate subtopos, and in between them
is an array of subtoposes corresponding to worlds intermediate between
computable and classical mathematics. In his seminal work introducing
the effective topos, Hyland associated to each Turing degree $\mathbf{d}$ a
subtopos of “$\mathbf{d}$-computable mathematics”, which admits a description
similar to the effective topos with plain computability replaced
by computability relative to $\mathbf{d}$. Hyland proved that this defines
an embedding of Turing degrees into subtoposes of the effective topos
\cite[theorem 17.2]{HylandEff}. However, there are many subtoposes which
do not arise in this way.

As a general fact of topos theory, the subtoposes of an elementary
topos are described by the so-called “Lawvere–Tierney topologies”
(also known by a variety of other names such as “local operators”) on the topos.
Although this characterization is already more concrete, to explicitly
describe the Lawvere–Tierney topologies on the effective topos is a
non-trivial task. A first step was undertaken by \cite{LeeVanOosten},
who introduced the subclass of \emph{basic} Lawvere–Tierney topologies
on the effective topos and described them using certain well-founded
trees called \emph{sights}. However, the main
advance was made by \cite{KiharaLT}, who proposed a considerably simpler
description which encompasses all Lawvere–Tierney topologies on the
effective topos. This description gives a new point
of view on these objects, as a generalization of Turing degrees
incorporating two distinct forms of non-determinism.

An oracle of this generalized kind is a
function\footnote{Alternatively, they can be described as partial
  multivalued functions from $\N \times \Lambda$ to $\N$, where
  $\Lambda$ is an unspecified set which may depend on the oracle.  We
  will clarify this in ¶\ref{differences-with-kihara-setup}.} $g \colon \N \to
\P(\P(\N))$, in which each powerset stands for one kind of
nondeterminism. When a program queries the oracle
$g$ at $n$, an external agent chooses a set $G \in g(n)$
in a benevolent way (angelic non-determinism), then another external
agent chooses a natural number $v \in G$ in an adversarial way
(demonic non-determinism), and the program receives the value $v$ (and
only this).

The mathematically precise definition of “benevolent” and
“adversarial” uses a three-player imperfect information game. A
program computing $f \colon \N \to \N$ is represented by a player,
Arthur\footnote{The names are presumably borrowed
from Arthur–Merlin interactive proof protocols in complexity theory,
which bear a slight resemblance to this game.},
who is restricted to computable strategies. Arthur is allied with
Nimue, the benevolent external agent, and both play against Merlin,
the adversarial external agent.  The initial configuration is
given by an input $m$. At each turn, either Arthur declares a final
value of his computation, winning if and only if this final value
is the correct $f(m)$, or he queries the oracle at some $n$, in which
case Nimue chooses $G \in g(n)$, then Merlin chooses $v \in G$ and
Arthur receives $v$ (crucially, without seeing $G$).
By definition, $f$ is computable with oracle
$g$ when Arthur and Nimue have a winning strategy against Merlin
in this game. This can be extended to $f \colon \N \to \P(\P(\N))$ in a
straightforward way, which defines the reducibility relation between
functions $f, g \colon \N \to \P(\P(\N))$, and the usual quotient which
identifies mutually reducible functions yields the partially ordered
set of Arthur–Nimue–Merlin degrees. Precise definitions are spelled
out in our \cref{section-arthur-nimue-merlin-games} or
in \cite{KiharaLT}; and in \cref{section-lattice-structure} we will
describe the lattice structure on these generalized degrees.

This description by a game also sheds light on Hyland's embedding
of Turing degrees into Arthur–Nimue–Merlin degrees: it is
obtained by identifying a Turing oracle $g \colon \N \to \N$ with the
generalized oracle $\hat{g} \colon \N \to \P(\P(\N)), n \mapsto
\{\{g(n)\}\}$. When the game is played with $\hat{g}$, neither Nimue nor
Merlin have any choice in their moves and Arthur is in essence using
the ordinary Turing oracle $g$.

To lighten notations, we henceforth refer to Turing degrees as
\textbf{T0 degrees} and to the most general Arthur–Nimue–Merlin degrees as
\textbf{T3 degrees}\footnote{In \cite{KiharaOracles}, the term “extended Turing–Weihrauch
degrees” is used. However, the focus is more on the analogous objects
for the Kleene–Vesley topos, and we prefer to eschew the
association with Weihrauch's name since we never deal with
call-once oracles. Hence we choose “Arthur–Nimue–Merlin degrees”, to
be clarified with “T1”, “T2” or “T3”  to distinguish the T3 degrees
from the intermediate generalizations introduced in the following
paragraphs.  (We might also suggest “W1” through “W3” for Weihrauch
degrees, see ¶\ref{passing-remark-weihrauch}.)
We hope that this suggestion will not cause the situation described
in \cite{xkcd927}.}  As the terminology suggests, this is because
several other particular types of oracles are worthy of attention,
intermediate between the “T0” and “T3” levels.

We call \textbf{T1 degrees} the degrees of partial functions
$g \colon \N \dasharrow \N$, converted to general oracles
by setting $\hat{g}(n) := \{\{g(n)\}\}$ when $g(n)$ is defined
and $\hat{g}(n) := \varnothing$ otherwise. In the game played with
$\hat{g}$, Arthur may only query the oracle on inputs where $g$
is defined, otherwise Nimue has no available move and hence
Arthur and Nimue lose immediately. Conversely, the game defining
reducibility from $f$ to $g$ can only start on initial configurations
where the input $m$ is such that $f(m)$ is defined. Summarizing
the situation, $f \colon \N \dasharrow \N$ is T1-reducible to
$g \colon \N \dasharrow \N$ when $f$ has an extension which is
partial computable with oracle $g$, with the provision that
oracle queries outside the domain of $g$ make the computation fail.
This reducibility is one of many which can be devised for partial
functions (see \cite{SassoSurvey}, \cite{CooperPartial},
\cite[prop. II.3.25]{Odifreddi1} and \cite{MO112617} for a discussion).
Furthermore, \cite[corollary 2.13]{FaberVanOosten} proved that the
subtoposes of the effective topos which arise from T1 degrees are
exactly those that are themselves realizability toposes.
A systematic investigation of the structure of the T1 degrees was
initiated (under the name “subTuring degrees”) in \cite{KiharaNg1}.

In a similar fashion, we call \textbf{T2 degrees} the degrees of
partial functions $g \colon \N \dasharrow \P(\N)$ (which we may think
of as “partial multivalued” functions), converted to general oracles
by setting $\hat{g}(n) := \{g(n)\}$ when $g(n)$ is defined and
$\hat{g}(n) := \varnothing$ otherwise. In this case, Nimue has
no choice but Merlin's strategy can be non-trivial. When Arthur
queries the oracle on $n$, the set $g(n)$ must be defined (otherwise
Arthur loses), and Merlin chooses any value in $g(n)$ to return to
Arthur.  So the T2 degrees correspond to a fairly natural notion of
Turing reduction of non-deterministic (i.e., adversarially
non-deterministic) functions.  Let us give just one example to
illustrate this.

It is well-known that there exist T0 (Turing) degrees strictly between
computable functions and the halting problem, but none seems to arise
naturally. On the other hand, there is a natural intermediate degree in the T2 degrees.
Define $g(e) := \{0\}$ if the $e$-th program with no input and boolean
output terminates and returns false, $g(e) := \{1\}$ if it terminates
and returns true, and $g(e) := \{0, 1\}$ if it does not terminate.
This corresponds informally to having an oracle that will give you the
return value of a boolean-valued program that terminates, but may
return either boolean for one that does not terminate.
The T2 degree of $g$ is called $\mathtt{LLPO}$ in \cite[example 2.5]{KiharaLT}
due to its connection with the lesser limited principle of omniscience
in constructive mathematics. The T0 degrees above this T2 degree
are exactly the “PA degrees”, a classical object of
study in computability (\cite[definition 15.3.11]{CooperBook}
and \cite[theorem 10.3.3]{Soare}).  See
also ¶\ref{digression-separating-degree} below.

To summarize, we move from T0 (Turing) to T1 degrees by allowing
partial functions, from T1 to T2 degrees by introducing adversarial
nondeterminism (personified by “Merlin”), and from T2 to the most
general T3 degrees by introducing benevolent nondeterminism
(personified by “Nimue”).  Since these intermediate levels can also be
elegantly characterized at the level of Lawvere–Tierney topologies
(see the second paragraph
of \cref{statement-oracle-topology-correspondence}), we believe the
T0–T3 terminology is appropriate.

Another important subclass of the T3 degrees,
incomparable with T2 degrees, is given by the \textbf{basic}
T3 degrees first studied in \cite{LeeVanOosten}, which correspond
to the functions $\N \to \P(\P(\N))$ which are constant.
In other words, Arthur does not even need to explicitly ask a question
to the oracle (but Nimue can “read questions in his mind”), and
Nimue chooses a set of possible moves for Merlin in a fixed
set of possible move sets.  So now the game effectively becomes one
where Nimue attempts to communicate information to Arthur but has to
go through their adversary Merlin.

The phenomenon that a well-studied class
of T0 (Turing) degrees becomes simply a cone in the generalized
degrees, which we have illustrated
concerning the T2 degrees is repeated in the basic T3 degrees.
Indeed, \textsc{Pitts}, in his doctoral thesis
(\cite[example 5.8]{PittsThesis}) considered the T3 oracle
which consists of the cofinite subsets of $\N$.
Informally, this means that Nimue chooses a natural number, then
Merlin can replace it by any larger number before Arthur can see it.  And
in \cite[theorem 2.2]{vanOostenPittsOperator} it is shown that the
functions $\N \to \N$ computable from this oracle exactly coincide
with the hyperarithmetical (i.e., $\Delta_1^1$) ones.

A deep investigation of basic degrees, focusing on their combinatorial
properties (and their unexpected relation with the Rudin–Keisler
order), was even more recently undertaken in \cite{KiharaNg2}.

\thingy \textbf{Contribution.} We make a further advance in the
understanding of Arthur–Nimue–Merlin degrees by exhibiting an
embedding of the Turing (T0) degrees with the opposite of their
standard order into the basic T3 degrees. Namely, we define for
each Turing degree $\mathbf{d}$ a basic\footnote{Thus, our “co-Turing
degree” construction provides an element of answer to \cite[§6,
  question 1]{KiharaLT} which asked for other “interesting” basic T3
degrees.} T3 degree $\coT(\mathbf{d})$, called its
\emph{co-Turing degree}, so that the map $\coT$ is an order-reversing embedding,
i.e., $\mathbf{d}_0 \leq \mathbf{d}_1 \iff \coT(\mathbf{d}_0) \geq \coT(\mathbf{d}_1)$ for all Turing degrees
$\mathbf{d}_0, \mathbf{d}_1$. Furthermore, we show that a non-zero Turing degree is
never comparable with the co-Turing degree of a non-zero Turing
degree.

The proof of the latter fact relies on a property of meets in T1 degrees,
namely that there exists no minimal pair of T0 degrees in the T1 degrees.
As a preliminary, we give a short self-contained proof of this
fact, already proved in \cite{KiharaNg1} as a consequence of more
sophisticated results; this proof is credited to \cite{MO507736}.
We also seize the opportunity to describe the meet in T2 and T3 degrees,
although it is not needed for the main result, and make a few related
remarks.

\thingy \textbf{Outline.} \Cref{section-arthur-nimue-merlin-games}
precisely describes the Arthur–Nimue–Merlin game and recalls
the basic properties of our classes of degrees: this is essentially a
summary of some parts of \cite{KiharaLT} but with a few inessential
differences in notation and terminology.
In \cref{section-lattice-structure}, we describe the binary join and
meet of degrees and prove some results relating to their lattice
structure.  In \cref{section-decoding-and-coding-degrees}
we define the \emph{decoding degree} and \emph{coding
degree} associated with a promise problem and we prove
our main technical result, \cref{main-strategy-result},
characterizing when a coding degree is the omniscient degree. In
\cref{section-co-turing-degrees}, we define the co-Turing degrees
as a particular case of coding degrees and prove our main result,
\cref{reverse-embedding-theorem}, according to which the mapping
from Turing degrees to co-Turing degrees is an order-reversing
embedding. In \cref{section-comparison-of-codegree-with-degree}, we
prove that there is no non-trivial comparison of co-Turing degrees
with Turing degrees. \Cref{section-topos-theory} is devoted to
reinterpreting the constructions in topos-theoretic terms and is the
only section which (after an initial recapitulation on Lawvere–Tierney
topologies on the effective topos, which is self-contained)
requires familiarity with the theory of toposes
and (categorical) realizability. Finally,
in \cref{section-questions}, we propose various questions for further
research.

\thingy\label{general-notations}\textbf{General notations and conventions.}
We work in classical logic unless otherwise specified.  However, we
still adopt the convention from constructive mathematics to call
\textbf{inhabited} a set which has an element (this is classically
equivalent to being non-empty). We write $f \colon A \to B$ for a total
function and $f \colon A \dasharrow B$ for a partial function. In the
latter case, we write $f(x){\downarrow}$ to mean that $f$ is defined
at $x$, and $f(x){\downarrow} = y$ to mean that furthermore the value
equals $y$. The powerset of $X$ is denoted by $\P(X)$.  We deal with
“multivalued” functions by treating them explicitly as functions with
value in a powerset (e.g., we consider functions $\N \to \P(\N)$
where some authors would prefer to write $\N \rightrightarrows \N$).

We fix a computable Gödel coding of tuples by natural numbers, denoted
$(m_0,\ldots,m_{k-1}) \mapsto \langle m_0,\ldots,m_{k-1}\rangle$.
The $e$-th partial computable function is $\varphi_e \colon \N \dasharrow \N$
and the $e$-th partial computable function with oracle
$f \colon \N \dasharrow \N$ is $\varphi_e^f$. In the definition of
$\varphi_e^f$, if the execution of the $e$-th program on $n$
queries the oracle $f$ where it is undefined, then $\varphi_e^f(n)$
is undefined. The numbering of programs is independent of the oracles
(e.g., there exists $e$ such that $\varphi_e^f = f$ for all $f$).
We freely identify a subset $A \subseteq \N$ with its characteristic
function $1_A$.

We denote by $U \sqcup V$ the labeled disjoint union of two sets
$U, V \subseteq \N$, defined as $U \sqcup V := \{\langle 0, u \rangle
\mid u \in U\} \cup \{\langle 1, v \rangle \mid v\in V\}$.

\bigskip

\thingy\label{specific-notations}\textbf{Specific notations and
  definitions} introduced in this paper (numbers refer to the
paragraph where they are defined).
\begin{itemize}
\item T0–T3 oracles, “basic” oracles: \ref{t0-to-t3-oracles}
\item “reduction game” ($\redTThree$): \ref{definition-reduction-game}
\item T0–T3 degrees ($\mathscr{D}_{\mathrm{T0}}$ through $\mathscr{D}_{\mathrm{T3}}$, $[f]_\TThree$): \ref{t0-to-t3-degrees}
\item $\Om$ (“omniscient” degree): \ref{bottom-top-and-omniscient-degrees}
\item $[f\oplus g] = [f] \vee [g]$ (binary join of degrees): \ref{join-of-degrees}
\item $[f\meetTOne g]_{\mathrm{T1}} = [f]_{\mathrm{T1}} \meetDOne [g]_{\mathrm{T1}}$ (binary meet of T1 degrees): \ref{meet-of-t1-degrees}
\item $[f\meetTThree g]_{\mathrm{T3}} = [f]_{\mathrm{T3}} \meetDThree [g]_{\mathrm{T3}}$ (binary meet of T2/T3 degrees): \ref{meet-of-t2-degrees}
\item $\mathbf{d}_{P,Q}$ (“decoding” oracle/degree) and $\mathbf{c}_{P,Q}$ (“coding” oracle/degree): \ref{definition-coding-decoding-degrees}
\item “promise problem”: \ref{promise-problem}
\item $\mathrel{\preceq_{\mathrm{m}}}$ (many-to-one reduction of promise problems): \ref{definition-many-to-one-reduction}
\item $H_0^A, H_1^A$: \ref{definition-codegrees}
\item $\tilde{\mathbf{c}}_A$ (“co-Turing” degree) and $\tilde{\mathbf{d}}_A$: \ref{definition-codegrees}
\item $\coT([A]_{\mathrm{T0}}) = \tilde{\mathbf{c}}_A$: \ref{reverse-embedding-theorem}
\item $U\Rrightarrow V$: \ref{definition-lawvere-tierney-topologies}
\item “Lawvere–Tierney topology on the effective topos”: \ref{definition-lawvere-tierney-topologies}
\item $g \mapsto J_g$ and $J \mapsto J^\natural$: \ref{definition-oracle-topology-correspondence}
\item $\varpi(G)$ (“$G$ is inhabited”): \ref{preparation-for-t3-oracle-to-internal-topology}
\item $X_{P,Q}$: \ref{definition-x-p-q-object}
\item $\coT(j)$ (for $j$ a Lawvere–Tierney topology): \ref{codegree-construction-for-topologies}
\end{itemize}

\section{Arthur–Nimue–Merlin games and degrees}\label{section-arthur-nimue-merlin-games}

The purpose of this section is to provide the reader not familiar
with \cite{KiharaLT} with a definition of the Arthur–Nimue–Merlin
degrees and a summary of their essential properties.  We also properly
introduce the terminology “T0” through “T3”, which is our own.
Readers who \emph{are} familiar with \textit{op. cit.} are advised
to read ¶\ref{differences-with-kihara-setup}.

\bigskip

We start by introducing the various kinds of oracles.

\begin{dfn}\label{t0-to-t3-oracles}
We define formally:
\begin{itemize}
\item A \textbf{T0 oracle} (or \textbf{ordinary Turing oracle}) to be
  a function $\mathbb{N} \to \mathbb{N}$.
\item A \textbf{T1 oracle} to be a partial function $\mathbb{N}
  \dasharrow \mathbb{N}$.
\item A \textbf{T2 oracle} to be a partial function $\mathbb{N}
  \dasharrow \mathcal{P}(\mathbb{N})$.
\item A \textbf{T3 oracle} to be a function $\mathbb{N}
  \to \mathcal{P}(\mathcal{P}(\mathbb{N}))$.
\item A \textbf{basic (T3) oracle} to be an element of
  $\mathcal{P}(\mathcal{P}(\mathbb{N}))$.
\end{itemize}
We agree on the following identifications:
\begin{itemize}
\item A T0 oracle $g\colon \mathbb{N} \to \mathbb{N}$ is identified
  with the T1 oracle obtained by considering the total function $g$ as
  a partial function $\mathbb{N} \dasharrow \mathbb{N}$ that happens
  to be defined everywhere.
\item A T1 oracle $g\colon \mathbb{N} \dasharrow \mathbb{N}$ is
  identified with the T2 oracle given by the partial function
  $\mathbb{N} \dasharrow \mathcal{P}(\mathbb{N})$ that takes $n$ to
  $\{g(n)\}$ if $g(n)$ is defined (and undefined otherwise).
\item A T2 oracle $g\colon \mathbb{N} \dasharrow
  \mathcal{P}(\mathbb{N})$ is identified with the T3 oracle given by
  the total function $\mathbb{N} \to
  \mathcal{P}(\mathcal{P}(\mathbb{N}))$ that takes $n$ to $\{g(n)\}$
  if $g(n)$ is defined, and $\varnothing$ otherwise.
\item A basic oracle $\mathscr{G} \in \mathcal{P}(\mathcal{P}(\mathbb{N}))$ is
  identified with the T3 oracle given by the constant function
  $\mathbb{N} \to \mathcal{P}(\mathcal{P}(\mathbb{N}))$ with
  value $\mathscr{G}$.
\end{itemize}
\end{dfn}

\thingy Intuitively, the oracles should be understood in the following
way: querying a T0 oracle $\mathbb{N} \to \mathbb{N}$ corresponds to
asking the question “what is the value $g(n)$?”.  The same holds for a
T1 oracle $\mathbb{N} \dasharrow \mathbb{N}$ except that now we add “I
promise that this value is defined!”.  For a T2 oracle $\mathbb{N}
\dasharrow \mathcal{P}(\mathbb{N})$, the query is to be understood as
“give me any one of the values in $g(n)$” (this is the introduction of
adversarial nondeterminism).  And for a T3 oracle $\mathbb{N} \to
\mathcal{P}(\mathcal{P}(\mathbb{N}))$, the query means “my ally will
choose the best possible $G \in g(n)$ and now please give me any
one of the values in $G$” (so we now have both benevolent and
adversarial forms of nondeterminism).

\bigskip

To make these intuitions precise, we introduce the Arthur–Nimue–Merlin game.
The following definitions are slightly reformulated from \cite[definitions 2.9 \& 2.10]{KiharaLT}.

\begin{dfn}\label{definition-reduction-game}
Let $f,g\colon\mathbb{N} \to
\mathcal{P}(\mathcal{P}(\mathbb{N}))$ be T3 oracles. We define the
\textbf{reduction game} of $f$ to $g$ as follows:
\begin{itemize}
\item The game has three players named Arthur, Nimue and Merlin.
  Arthur and Nimue are allied against Merlin (so the two possible
  outcomes are that Arthur and Nimue win and Merlin loses,
  or Arthur and Nimue lose and Merlin wins).
\item The game starts with an initial configuration or
  “challenge”\footnote{One may imagine that it is a challenge chosen
    by Merlin for Arthur and Nimue:
    see ¶\ref{differences-with-kihara-setup}.}, which consists of a
  natural number $m$ and a set $F \in f(m)$. Nimue and Merlin can see
  both $m$ and $F$, while Arthur can only see $m$.  Accordingly, we
  will sometimes refer to $m$ as the “public” part and $F$ as the “secret”
  part of the initial configuration.
\item The moves are played in the following order: Arthur, then Nimue,
  then Merlin play in turn, repeatedly, until Arthur, during his turn,
  plays a special move that ends the game (or until a player cannot play
  and loses).
\item When it is his turn to play, Arthur has two kinds
  of moves available: either declare a natural number of his choice
  as the final value of the game, which ends the game,
  or query the oracle at a natural number of his choice.
\item If Arthur declares $u \in \N$ as the final value of the game,
  then Nimue and Arthur win if $u \in F$ (where $F \subseteq \N$ is
  the secret part of the initial configuration), otherwise Merlin wins.
\item If Arthur queries the oracle at some $n \in \N$,
  his choice of $n$ must be such that $g(n)$ is inhabited,
  otherwise Nimue and Arthur lose immediately. If $g(n)$ is
  indeed inhabited, then the game continues and Nimue plays
  next. Both Nimue and Merlin can see the choice of $n$.
\item When it is her turn after Arthur queried the oracle
  on $n$ (in which case $g(n)$ is inhabited), Nimue chooses a set
  $G \in g(n)$. Her choice is visible to Merlin but not to Arthur.
  The game continues with Merlin's turn.
\item If $G$ is empty, then Merlin loses immediately. If $G$
  is inhabited, Merlin chooses a reply $v \in G$, which both
  Arthur and Nimue can see, and the game continues with
  Arthur's turn.
\item If the game never ends, then Merlin wins by default.
\end{itemize}

A \textbf{Nimue strategy} for this game is a function taking an
initial configuration and a finite sequence of moves by Merlin and
Arthur to an element of $\mathcal{P}(\mathbb{N})$ to be played by Nimue;
an \textbf{Arthur strategy} is a \emph{partial computable} function
taking a natural number (representing the public part, $m$, of the initial
configuration) and a finite sequence of natural numbers (representing
moves by Merlin) to a move to be played by Arthur (coded, e.g., as
either $\langle 0, n \rangle$ to query the oracle for $n$,
or $\langle 1, u \rangle$ to declare $u$ as the final value).
If the move returned by the strategy is undefined or not of
the appropriate form, then Arthur and Nimue will lose the game.
An \textbf{Arthur+Nimue strategy} is a pair consisting of an Arthur
strategy and a Nimue one.

An Arthur+Nimue strategy is \textbf{winning} when Arthur and Nimue win
the game when they play following that strategy, no matter what the
initial configuration and Merlin's moves are\footnote{We do not define
the notion of winning Merlin strategy because Merlin cannot have one
except in trivial cases: Arthur might win “by accident” by
declaring a good value on his very first move.}.

We say that $f$ is \textbf{reducible} to $g$ (or T3-reducible to $g$),
and write $f \preceq g$ (or $f \mathrel{\preceq_{\mathrm{T3}}} g$)
when there exists a winning Arthur+Nimue strategy in the reduction game
of $f$ to $g$. We also write $f \equiv g$ (or
$f \mathrel{\equiv_{\mathrm{T3}}} g$) when $f \preceq g$ and
$g \preceq f$.
\end{dfn}

\thingy\label{differences-with-kihara-setup} The setup we have defined
differs in two inessential ways from that used in \cite{KiharaLT},
which we now record.

The first is that we define our general (T3) oracles as functions
$g\colon \mathbb{N} \to \mathcal{P}(\mathcal{P}(\mathbb{N}))$ rather
than $\check g\colon \mathbb{N}\times\Lambda \dasharrow
\mathcal{P}(\mathbb{N})$ (which in \textit{op. cit.} is called a
“bilayer” function), where $\Lambda$ is an arbitrary set which can
vary from function to function.  In the latter presentation, $\Lambda$
is the set of moves available to Nimue, and the connection between the
two descriptions is simply that the function $g\colon
\mathbb{N}\to\mathcal{P}(\mathcal{P}(\mathbb{N}))$ takes $n$ to the
set $\{\check g(n,\lambda) \mid \lambda\in\Lambda,\; \check
g(n,\lambda){\downarrow}\}$ of values of the function $\check g$ at
values the form $(n,\lambda)$ where it is actually defined.
(Conversely, given $g\colon \mathbb{N} \to
\mathcal{P}(\mathcal{P}(\mathbb{N}))$ we may take $\Lambda$ to be the
union of all the $g(n)$, and we define $\check g(n,\lambda)$ to be
$\lambda$ if $\lambda \in g(n)$ or undefined otherwise.)  We find
the presentation as functions $\mathbb{N} \to
\mathcal{P}(\mathcal{P}(\mathbb{N}))$ more convenient because it is
more symmetric and avoids having to deal with “unspecified”
sets $\Lambda$.

The second difference is that we consider the data $(m,F)$ as an
“initial configuration” rather than an opening move by Merlin.  Of
course this is immaterial since we are demanding that an Arthur+Nimue
strategy apply to \emph{all} initial configurations, so we might as
well assume the initial configuration has been chosen by Merlin.  But
this convention will be slightly more convenient when we need to speak
of, say “the sequence of moves by Merlin” or “a strategy for such an
initial configuration”.

\begin{prop}
T3 reducibility $\mathrel{\preceq_{\mathrm{T3}}}$ is a preorder (i.e.,
reflexive and transitive) relation between functions $\mathbb{N} \to
\mathcal{P}(\mathcal{P}(\mathbb{N}))$.  In particular, T3 equivalence
$\mathrel{\equiv_{\mathrm{T3}}}$ is an equivalence relation.
\end{prop}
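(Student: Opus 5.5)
Reflexivity is handled by the copycat strategy. In the reduction game of $f$ to itself with initial configuration $(m,F)$, Arthur queries the oracle at $m$, which is allowed since $F \in f(m)$ makes $f(m)$ inhabited. Nimue answers with $G := F$. If $F$ is empty, Merlin cannot move and loses. Otherwise Merlin returns some $v \in F$, and Arthur then declares $v$ as the final value, which wins. Arthur's strategy is clearly computable.

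Transitivity is proved by composing strategies. Suppose $(\alpha,\nu)$ is a winning Arthur+Nimue strategy for $f \preceq g$ and $(\beta,\mu)$ is one for $g \preceq h$; we build a strategy for $f \preceq h$. Arthur simulates $\alpha$ on the public input $m$, keeping a history of simulated $g$-replies. Whenever $\alpha$ would query $g$ at $n$, Arthur starts a fresh sub-game by running $\beta$ on input $n$, forwarding each $h$-query of $\beta$ to the real oracle $h$ and passing the real replies back to $\beta$. When $\beta$ declares a value $v$, Arthur appends $v$ to the simulated history of $\alpha$ and continues. When $\alpha$ declares a final value $u$, Arthur declares $u$. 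All of this is a partial computable function of $m$ and the sequence of Merlin's replies, because $\alpha$ and $\beta$ are partial computable and the bookkeeping that splits the reply sequence into sub-games is effective. If any simulated step is undefined, the composite is undefined there too, which is acceptable.

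Nimue plays a parallel simulation with full information. She tracks the simulated outer game, with initial configuration $(m,F)$ and the history so far. When $\alpha$ queries $g$ at $n$, she computes $G := \nu(\ldots) \in g(n)$ and then plays the inner sub-game with initial configuration $(n,G)$ according to $\mu$. This is legitimate because $(n,G)$ is a valid initial configuration for the $g \preceq h$ game once $g(n)$ is inhabited, and Nimue has seen everything she needs, since $\nu$ and $\mu$ depend only on configurations and move histories that she observes.

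The correctness argument is where care is needed. Take any run of the composite game and check it against the corresponding outer run. Each inner sub-game is a play of the $g \preceq h$ game under a winning strategy from $(n,G)$, so it terminates after finitely many moves with Arthur declaring some $v \in G$. No player gets stuck: a query at a non-inhabited $h(k)$ would be a loss in the inner game, and an empty set chosen by Nimue would make Merlin lose in the inner game as well. We also need to see that $G$ is inhabited whenever the inner game has to return a value, so that the inner game cannot terminate by Merlin failing to move. When $G = \varnothing$, Merlin cannot move in the outer game either, so the outer run is already a win, and we let Arthur's behaviour be whatever the inner strategy does. This degenerate case has to be handled precisely. One option is to observe that a winning inner strategy from $(n,\varnothing)$ can never reach a declaration, so it must end with Merlin being unable to move in the real game, which is again a win. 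Otherwise $v \in G$ is a legal Merlin reply in the outer game, so the whole sequence simulates a legal outer run consistent with $(\alpha,\nu)$. That outer run is won, and it is finite, so the composite game ends with a value in $F$. Infinite plays are excluded because each sub-game is finite and the outer simulated run is finite.

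Finally, $\equiv_{\mathrm{T3}}$ is an equivalence relation by the usual argument for the symmetrization of a preorder.
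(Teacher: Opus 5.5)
Your proof is correct. Reflexivity comes from the copycat strategy. Transitivity comes from composing $(\alpha,\nu)$ with $(\beta,\mu)$ by simulation, and you handle both delicate points: an inner game started from $(n,\varnothing)$ can only end with the real Merlin stuck, and the simulated outer run is finite, which bounds the number and length of the inner games.

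The paper gives no argument of its own here and simply cites Kihara's Proposition 2.13. Your write-up is a self-contained direct game-composition proof standing in for that citation.
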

\begin{proof}[Reference]\cite[prop. 2.13]{KiharaLT}
\end{proof}

\begin{dfn}\label{t0-to-t3-degrees}
The partially ordered set of \textbf{T3 degrees} is the quotient of the
set of T3 oracles (i.e., functions $\N \to \P(\P(\N))$) by the equivalence
relation $\mathrel{\equiv_{\mathrm{T3}}}$, equipped with the partial order
induced by $\mathrel{\preceq_{\mathrm{T3}}}$ (also written
$\mathrel{\preceq_{\mathrm{T3}}}$). We write $[g]_{\mathrm{T3}}$ for
the T3 degree of $g$ and $\mathscr{D}_{\mathrm{T3}}$ for the set of
all T3 degrees.

We define the \textbf{T0}, resp. \textbf{T1}, resp. \textbf{T2},
resp. \textbf{basic} T3 degrees to be the T3 degrees defined by a T0,
resp. T1, resp. T2, resp. basic T3 oracle (all seen as particular T3
oracles as explained in \ref{t0-to-t3-oracles}).  We define
$\mathrel{\preceq_{\mathrm{T0}}}$ and
$\mathrel{\equiv_{\mathrm{T0}}}$ to be the restriction to the
T0 oracles of $\mathrel{\preceq_{\mathrm{T3}}}$ and
$\mathrel{\equiv_{\mathrm{T3}}}$ respectively, and denote
$[g]_{\mathrm{T0}}$ for the T0 degree
of $g$ and $\mathscr{D}_{\mathrm{T0}}$ for the set of all T0 degrees,
and analogously for the other levels T1 and T2.
\end{dfn}

\thingy\label{passing-remark-weihrauch} We emphasize that our T0–T3
oracles are “Turing” kind oracles, meaning that they can be used
\emph{in unrestricted fashion} (there is no limit to how much Arthur
can query the oracle, as long as the computation eventually
terminates).  In \cite{KiharaLT} and \cite{KiharaOracles}, these
“Turing” kind oracles are studied along with “Weihrauch” kind oracles,
that is, oracles that are to be called \emph{exactly once} (or
\emph{at most once} for the “pointed Weihrauch” kind): formally, this
means that the reduction is defined by a variant of the game in which
the number of moves is limited to a single oracle call by Arthur.
(But the rest of the definitions are unchanged, so we might call them
“W0” through “W3” oracles; the “W3” ones have been studied, in a more
general context, as “extended Weihrauch predicates” in
\cite[definition 3.7]{BauerReducibility}.)  But these “Weihrauch” kind
oracles will not concern us in this paper, and the only reason we
mention them is to dispel possible confusion and explain how the
definitions we use connect with other ones found in the literature.

Another possible source of confusion which will also not concern us is
that, besides the 0–3 level, and the Turing-or-Weihrauch kind of
reduction, there is a third orthogonal dimension of oracleness.  Here
we are concerned with computations on $\N$ even if we generalize them
to allow various forms of nondeterminism; but it is possible to do
something analogous for computations on $\N^\N$, or, for the truly
adventurous, a “partial combinatory algebra” or even a relative one.
This is explained in great generality in \cite{KiharaOracles}.  Again,
the only reason we mention these concepts is to hopefully help the
reader navigate these perhaps confusingly similar notions\footnote{For
example, the most usual form of Weihrauch reducibility is what we
would call the study of “W2-level oracles on the computable part of
Kleene's second algebra $\N^\N$” to specify the various dimensions.},
but in the present paper, only computations on $\N$ will appear.

These reassurances having been made, it is safe to forget the previous
two paragraphs.

\begin{dfn}\label{bottom-top-and-omniscient-degrees}
The \textbf{bottom} degree, denoted $\mathbf{0}$ is the T0 degree of the constantly zero
function, which is also the basic T3 degree defined by $\{\{0\}\}$ (in
other words, Merlin's only move in response to an arbitrary move by
Arthur is to play $0$).  As a Turing degree, this is the usual degree
of all computable functions.

The \textbf{top} degree, denoted $\top$, is the T2 degree of the
constant function $\mathbb{N} \dasharrow \mathcal{P}(\mathbb{N})$ with value
$\varnothing$, which is also the basic T3 degree defined by
$\{\varnothing\}$ (in words, Merlin never has any move available).

The \textbf{omniscient}\footnote{This terminology is ours (in
\cite{KiharaLT} this degree is denoted $\mathtt{Error}_{1/2}$), but seems
self-explanatory.  However, it may be worth pointing out that the
game-theoretic description, and specifically the “Nimue” character,
offer a satisfactory answer to what “omniscience” means when we say we
can ask questions to an “omniscient” oracle and still want to avoid
self-referential questions like “will you answer ‘no’ to this
question?” — in fact, Arthur doesn't even need to phrase questions to
the oracle since it is a basic oracle, the questions are implied by
the common strategy he shares with Nimue.} degree, denoted $\Om$, is
the basic T3 degree defined by $\{\{n\} \mid n \in \N\}$.
\end{dfn}

The reader unfamiliar with the theory might wish to check their
understanding by working through the following, which serves as a
warmup for our later “decoding” oracles:

\begin{exer}\label{exercise-reduction-to-a-bit}
The basic T3 oracle $\{\{n\} \mid n \in \N\}$ is T3-equivalent to basic T3
oracle $\{\{0\}, \{1\}\}$ (so either one defines the degree $\Om$).
\end{exer}
\begin{proof}[Solution]
The reduction of $\{\{0\}, \{1\}\}$ to $\{\{n\} \mid n \in \N\}$ is
obvious: Nimue just plays the set ($\{0\}$ or $\{1\}$) which is part
of the initial configuration, forcing Merlin to communicate its unique
element to Arthur, and Arthur declares it as final value.

For the reduction of $\{\{n\} \mid n \in \N\}$ to $\{\{0\}, \{1\}\}$, we
essentially need to argue that if Nimue can communicate one bit to
Arthur, she can communicate an arbitrary natural number.  For example,
a winning Arthur+Nimue strategy might be the one where, if the secret
part of the initial configuration is $\{n\}$, then on the $n$-th
move, Nimue plays $\{1\}$, and on all other moves $\{0\}$, while
Arthur's strategy is to repeatedly query the oracle until Merlin
returns the bit $1$, and, if this happens on the $n$-th move, declare
the final value as being $n$.
\end{proof}

\thingy \label{reduction-from-om} It is straightforward to check that $\mathbf{0}$ and $\top$ are the
smallest and greatest T3 degree respectively, that $\mathbf{0} < \Om < \top$,
and moreover that $\Om$ is the second-greatest degree, i.e., the greatest
degree less than $\top$. Indeed, for $g \colon \N \to \P(\P(\N))$,
we have the following alternative:

\begin{itemize}
\item There exists $n$ such that $\varnothing \in g(n)$. Then
$[g]_{\mathrm{T3}} = \top$ because for every oracle $f$, in the
reduction game from $f$ to $g$, Arthur and Nimue can win as follows: Arthur
queries the oracle at $n$, and Nimue plays $\varnothing$,
so that Merlin has no legal moves and loses immediately.
\item Otherwise, $g$ reduces to $\Om$, because in the corresponding
reduction game, Arthur and Nimue have the following strategy:
given the initial configuration consisting of $n$ and $G \in g(n)$,
Nimue picks $v \in G$, which must exist, and plays $\{v\}$ so that
Merlin is forced to play $v$; Arthur queries the oracle once and
returns the resulting value directly.
\end{itemize}

Because the specific case of the reduction game of $\Om$
to $g$ comes up frequently, let us explicitate this particular case:
in this game, Nimue is given a target natural number, or without loss
of generality (per \cref{exercise-reduction-to-a-bit}) just a bit,
and Arthur's goal is, without seeing the target, but with Nimue's
help, to declare a value equal to the given target. The
fact that there is obviously no strategy to do this without
Nimue's help proves that $\Om$ is not a T2 degree.

\thingy \label{simplification-of-reducibility}
The T0 degrees are precisely the Turing degrees with their usual
partial order.  Indeed, it is straightforward to check that for $f,g$
two T0 oracles, an Arthur strategy in the reduction game of $f$ to $g$
is tantamount to an oracle machine program which computes $f$ from $g$
(and a Nimue strategy is trivial in this situation as Nimue's only
legal move is ever to play the only element of $\{\{g(n)\}\}$ after
Arthur has played $n$).

Similarly, for partial functions $f, g \colon \N \dasharrow \N$, we
have $f \redTOne g$ if and only if there exists $e$ such that
$\varphi_e^g$ extends $f$, i.e., such that
$\varphi_e^g(m){\downarrow}=f(m)$ if $f(m){\downarrow}$.

\begin{warn}\label{warning-meaning-of-computable-for-t1}
We stress, in particular, that a partial function $\N \dasharrow \N$
with T1 degree $\mathbf{0}$ is one which has a partial computable
\emph{extension}\footnote{One might tentatively call “subcomputable”
  such a function, but we have refrained from using this terminology.}
but not necessarily a partial computable function itself.

For example, \emph{any} partial function $\mathbb{N} \dasharrow
\mathbb{N}$ which equals $0$ wherever it is defined, has T1 degree
$\mathbf{0}$, but of course only countably many of such functions are
among the $\varphi_e$.
\end{warn}

\thingy\label{when-are-oracles-computable} As we have just said, for a
T1 oracle $f \colon \mathbb{N} \dasharrow \mathbb{N}$ to satisfy
$[f]_{\mathrm{T1}} = \mathbf{0}$ means that it has a (partial) computable
extension.  Let us also briefly mention what it means to be of
degree $\mathbf{0}$ for the other levels.

For a T2 oracle $f \colon \mathbb{N} \dasharrow
\mathcal{P}(\mathbb{N})$, the statement $[f]_{\mathrm{T2}} =
\mathbf{0}$ means that there is a partial computable function
$\varphi_e$ such that whenever $f(m){\downarrow}$ we have
$\varphi_e(m){\downarrow} \in f(m)$ (in other words, we can computably
pick an element of every $f(m)$ which is defined).

For a T3 oracle $f \colon \mathbb{N} \to
\mathcal{P}(\mathcal{P}(\mathbb{N}))$, the statement
$[f]_{\mathrm{T3}} = \mathbf{0}$ means that there is a partial
computable function $\varphi_e$ such that whenever $f(m)$ is
inhabited, we have $\varphi_e(m){\downarrow} \in \bigcap_{F\in f(m)}
F$.  (Indeed, in the reduction game of $f$ to $0$, the initial configuration
consists of an $m$ which is shown to Arthur and an $F \in f(m)$ which is
not shown to Arthur, and as Nimue and Merlin are essentially not
playing here, any strategy of Arthur will behave in the same way
whatever choice $F \in f(m)$ was
made, so in the end it must produce an element of $\bigcap_{F\in f(m)}
F$.)  Note that $\varphi_e(m)$ may fail to be defined
when $f(m) = \varnothing$.

In particular, a basic oracle $\mathscr{F} \in
\mathcal{P}(\mathcal{P}(\mathbb{N}))$ is of degree $\mathbf{0}$ iff
$\bigcap_{F\in \mathscr{F}} F$ is inhabited
(this is \cite[prop. 3.1(iii)]{LeeVanOosten}).

\section{Lattice structure on the degrees}\label{section-lattice-structure}

In this section, we collect various results, some of which should be
considered as part of the folklore, pertaining to the lattice structure
of the T1–T3 degrees.   We state and prove slightly more
than is strictly required for our main results, hoping that it will be
convenient for further study of Arthur–Nimue–Merlin degrees to have a
description of their lattice structure.

\medskip

The binary join (i.e., sup, least upper bound) operation is
unproblematic: it is represented by the standard construction for the
join of two Turing degrees, which remains applicable at all other
levels: intuitively, having access to the join of two oracles means
that Arthur can freely query either one.  This is made precise as
follows:

\begin{prop}\label{join-of-degrees}
The partially ordered sets $\D_{\mathrm{T0}}, \D_{\mathrm{T1}}, \D_{\mathrm{T2}}, \D_{\mathrm{T3}}$ have all binary
joins (which we denote by $\vee$), and the embeddings
$\D_{\mathrm{T0}} \hookrightarrow \D_{\mathrm{T1}} \hookrightarrow \D_{\mathrm{T2}} \hookrightarrow \D_{\mathrm{T3}}$
preserve them. In each level, we have $[f] \vee [g] = [f \oplus g]$
where $(f \oplus g)(2n) := f(n)$ and $(f \oplus g)(2n+1) := g(n)$.
\end{prop}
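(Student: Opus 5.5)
The plan is to prove everything at the T3 level and then check that the construction $f\oplus g$ stays inside each sublevel and is compatible with the identifications of \cref{t0-to-t3-oracles}. Concretely, for T3 oracles $f,g$ I will show three things: $f\preceq f\oplus g$, $g\preceq f\oplus g$, and that $f\preceq h$ and $g\preceq h$ together imply $f\oplus g\preceq h$. Once this holds for all T3 oracles, it holds a fortiori in the sub-posets $\D_{\mathrm{T0}},\dots,\D_{\mathrm{T2}}$, because those orders are restrictions of $\redTThree$. It then suffices that, for $f,g$ at a given level, $f\oplus g$ (computed at that level) is again at that level and is identified with the T3 oracle $\hat f\oplus\hat g$. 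This is immediate from the definitions: interleaving commutes with the maps $n\mapsto\{\{g(n)\}\}$, $n\mapsto\{g(n)\}$ or $\varnothing$, and so on, since those maps act pointwise. This simultaneously gives existence of joins at each level and their preservation by the embeddings.

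The two upper-bound reductions are trivial. In the game of $f$ to $f\oplus g$, Arthur plays a single query at $2m$, and Nimue plays the secret set $F\in f(m)=(f\oplus g)(2m)$, which is legal. Merlin must then answer some $v\in F$ (or lose if $F=\varnothing$), and Arthur declares $v$. The case of $g$ uses $2m+1$.

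For the least-upper-bound property, fix winning Arthur+Nimue strategies $(\alpha_f,\nu_f)$ for $f\preceq h$ and $(\alpha_g,\nu_g)$ for $g\preceq h$. In the game of $f\oplus g$ to $h$, the initial configuration is $(k,F)$ with $F\in(f\oplus g)(k)$. Arthur sees the parity of $k$. If $k=2m$, he runs $\alpha_f$ on public input $m$, and Nimue runs $\nu_f$ on the initial configuration $(m,F)$, which is a legal configuration of the $f$-to-$h$ game since $F\in f(m)$. If $k$ is odd, both use the $g$-strategies. Arthur's combined strategy is partial computable because it is a computable case split on $k$ feeding into $\alpha_f$ or $\alpha_g$. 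Every play of the combined game is literally a play of one of the two original games against the same Merlin moves. Hence it is won.

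I expect no real obstacle here. The only point requiring care is the bookkeeping of levels: at T1 and T2, undefinedness of $f(n)$ has to be matched with the value $\varnothing$ at T3, and one must check that $f\oplus g$ is partial exactly where it should be. This is a direct check.
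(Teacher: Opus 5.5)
Your proposal is correct and follows the paper's proof. You show that $f\oplus g$ is the join at the T3 level, where the parity of the public input tells Arthur and Nimue which of the two given winning strategies to run. You then observe that $\oplus$ stays within each of the T0--T2 levels and is compatible with the identifications, which is exactly the paper's argument. You also spell out the upper-bound reductions $f\preceq f\oplus g$ and $g\preceq f\oplus g$, which the paper leaves implicit in its ``iff''.
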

\begin{proof}
We need simply observe that $f\oplus g \redTThree h$ iff $f\redTThree
h$ and $g\redTThree h$ (since the $\oplus$ construction preserves each
T0–T3 level, if it computes the join at the T3 level, the same holds
for the lower levels).  But this is now obvious by observing that the
initial configuration for the reduction game of $f\oplus g$ to $h$
tells Arthur+Nimue which reduction strategy ($f\redTThree h$
or $g\redTThree h$) to apply.
\end{proof}

It is also true that binary joins exist in basic T3 degrees and are
preserved by the embedding into T3 degrees, but the construction must
be different; this is described in \cite[prop. 5.12]{LeeVanOosten}.

\medskip

\thingy\label{recollection-meet-of-t0-degrees} Now we consider the
more interesting case of binary meets (inf, greatest lower
bounds). These are well-known not to always exist in the T0 degrees
(see \cite[definition 3.4.1(v) and corollary 6.5.5]{Soare} or
\cite[exercises 10.3.10 and 13.4.24]{CooperBook}), which might be
seen as a deficiency of the T0 degrees.

We will now proceed to explain why T1, T2 and T3 degrees have all
binary meets, and how they relate.  The construction is more delicate,
but still fairly explicit.  Intuitively, having access to the meet of
two oracles means that Arthur can only get answers that he can get
from \emph{both} of these oracles, but exactly what “getting answers” means
needs to be clarified for each level (and we will see that we get
different operations at the T1 level and at the T2/T3 levels).

Let us start with T1 meets, which will play a crucial role in
\cref{section-comparison-of-codegree-with-degree}.

\begin{prop}\label{meet-of-t1-degrees}
The partially ordered set $\D_{\mathrm{T1}}$ has all binary meets. An explicit
construction is the following: for T1 oracles
$f, g \colon \N \dasharrow \N$, we have
$[f]_\TOne \meetDOne [g]_\TOne = [f \meetTOne g]$,
where $f \meetTOne g \colon \N \dasharrow \N$
is defined on $\langle p, q, m, n \rangle$ precisely when $\varphi_p^f(m)$ and
$\varphi_q^g(n)$ are both defined and have the same value, and then
is equal to that value.
\end{prop}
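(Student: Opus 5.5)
We prove the two defining properties of a greatest lower bound directly, using the characterization of ¶\ref{simplification-of-reducibility}: for partial $h,k\colon\N\dasharrow\N$, $h\redTOne k$ iff some $\varphi_e^k$ extends $h$. Write $h := f\meetTOne g$.

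\textbf{Lower bound.} We show $h\redTOne f$; the case of $g$ is symmetric. We need a single index $e$ with $\varphi_e^f(\langle p,q,m,n\rangle)\simeq\varphi_p^f(m)$. It exists by the universal oracle machine, since the numbering of programs is uniform in the oracle (¶\ref{general-notations}). Wherever $h(\langle p,q,m,n\rangle)$ is defined, $\varphi_p^f(m)$ is defined and equal to it, so $\varphi_e^f$ extends $h$. Note that oracle queries to $f$ outside its domain cause no trouble here: on inputs in the domain of $h$, the computation $\varphi_p^f(m)$ converges, so it never queries $f$ where $f$ is undefined.

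\textbf{Greatest.} Let $k\colon\N\dasharrow\N$ with $k\redTOne f$ and $k\redTOne g$. Choose $p,q$ such that $\varphi_p^f$ and $\varphi_q^g$ both extend $k$. Consider the computable map $r\mapsto\langle p,q,r,r\rangle$, and let $e$ be an index for the oracle program that queries its oracle at $\langle p,q,r,r\rangle$ and outputs the answer. If $k(r){\downarrow}$, then $\varphi_p^f(r){\downarrow}=k(r)=\varphi_q^g(r){\downarrow}$. Hence $h$ is defined at $\langle p,q,r,r\rangle$ with value $k(r)$, the query is legal, and $\varphi_e^h(r)=k(r)$. So $k\redTOne h$.

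\textbf{Conclusion and main point.} These two facts show that $[h]_\TOne$ is a greatest lower bound of $[f]_\TOne$ and $[g]_\TOne$. Since the greatest lower bound depends only on degrees, this gives existence of all binary meets in $\D_\TOne$. The only subtle step is the legality of oracle calls in the T1 setting: the reduction from $k$ to $h$ queries $h$ only on inputs where $h$ is defined. This is exactly why $h$ is defined as the agreement set of the two computations rather than, for instance, as a pair of values; the choice of both $p,q$ and the same argument $r$ makes the agreement automatic. No other obstacle is expected.
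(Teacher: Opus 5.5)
Your proof is correct and follows the same route as the paper's. The lower bound comes from simulating $\varphi_p^f(m)$ by a universal oracle program. The greatest-lower-bound property comes from querying $f \meetTOne g$ at $\langle p,q,r,r\rangle$, where $p,q$ witness $k \redTOne f$ and $k \redTOne g$; the paper leaves this diagonal choice $m=n=r$ implicit.
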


\begin{proof}
This is \cite[prop. 3.1]{KiharaNg1}, but we repeat the proof for
the reader's convenience.  We have $f \meetTOne g \redTOne f$ because we
can use $f$ to compute $\varphi_p^f(n)$, and likewise $f
\meetTOne g \redTOne g$. On the other hand, if $h \redTOne f$
and $h \redTOne g$, by the description of T1 reducibility in the
second paragraph of \ref{simplification-of-reducibility},
we have $p$ and $q$ such
that whenever $h(n)$ is defined, it equals both $\varphi_p^f(n)$ and
$\varphi_q^g(n)$, so $h \redTOne f \meetTOne g$.
\end{proof}

\begin{rmk}
Many small variations on the definition of $f \meetTOne g$ are possible.
Clearly we could take $m=n=0$ (by the s-m-n theorem).
Slightly less obvious is the fact, sometimes known as “Posner's
trick”, that we could (also, or alternatively) take $p=q$: indeed, if
$f$ and $g$ coincide whenever both are defined, then the result is
obvious, and if there is $k$ such that they are defined and differ at
$k$, then the program can test which oracle it is dealing with by
first querying the value at $k$.
\end{rmk}

\thingy As we have seen in \cref{join-of-degrees}, the binary join
operation does not depend on the level at which it is computed.  The
situation is completely different for the binary meet between the T0
and T1 levels: not only can the T0 meet fail to exist (as we have
recalled in \ref{recollection-meet-of-t0-degrees}), but even if it
does, it can fail to equal the T1 meet — indeed, we will see shortly
that it is essentially \emph{never} equals to the T1 meet.

Recall that the Turing (T0) degrees are known to contain \textbf{minimal pairs},
namely, pairs of non-zero degrees whose (T0) meet is zero
(cf. \cite[theorem 6.2.3]{Soare} or \cite[theorem 12.5.2]{CooperBook}).
The following proposition shows that, in contrast, the T1 meet of two T0
degrees can only be zero if one of them is: this is
\cite[corollary 4.8]{KiharaNg1}, and the latter will be sufficient for
our purposes, but we still record a slight generalization and give a
more direct proof, for the idea of which we are indebted\footnote{The
author of this answer did not respond to our comment asking them how
they prefer to be acknowledged, and we do not now their true
identity, so we must cite them as “Pastebee”.} to
\cite{MO507736}.

\begin{prop}\label{computability-of-t1-meet}
\textit{(Plain version.)}  If $\mathbf{f} \in \D_{\mathrm{T1}}$ and $\mathbf{g} \in \D_{\mathrm{T0}}$ are such that
$\mathbf{f} \meetDOne \mathbf{g} = \mathbf{0}$, then
already $\mathbf{f} = \mathbf{0}$ or $\mathbf{g} = \mathbf{0}$.

\textit{(Relativized version.)}  If $\mathbf{f} \in \D_{\mathrm{T1}}$ and $\mathbf{g} \in \D_{\mathrm{T0}}$ and $\mathbf{h} \in \D_{\mathrm{T0}}$ are such that
$\mathbf{f} \meetDOne \mathbf{g} \leq \mathbf{h}$, then
in fact $\mathbf{f} \leq \mathbf{h}$ or $\mathbf{g} \leq \mathbf{h}$.
\end{prop}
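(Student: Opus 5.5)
Only the relativized version needs proof: the plain version is the special case $\mathbf{h}=\mathbf{0}$. The approach is to assume $\mathbf{g}\not\leq\mathbf{h}$ and to use the hypothesis $\mathbf{f}\meetDOne\mathbf{g}\leq\mathbf{h}$ on one carefully chosen pair of programs. From the resulting $h$-computable function we then extract an $h$-computable extension of $f$.

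Fix representatives $f\colon\N\dasharrow\N$, $g\colon\N\to\N$ (total) and $h\colon\N\to\N$. By \cref{meet-of-t1-degrees} and the description of T1 reducibility in ¶\ref{simplification-of-reducibility}, there is an $h$-partial computable $\psi$ extending $f\meetTOne g$. Choose the two programs as follows.
\begin{itemize}
\item $p$ computes, on input $\langle m,u\rangle$, the value $0$ if $f(m)=u$ and $1$ otherwise. This is defined exactly when $f(m){\downarrow}$.
\item $q$ computes, on input $\langle k,v\rangle$, the value $0$ if $g(k)=v$ and $1$ otherwise. This is always defined, since $g$ is total.
\end{itemize}
Writing $\psi(m,u,k,v)$ for $\psi(\langle p,q,\langle m,u\rangle,\langle k,v\rangle\rangle)$, we then know the following whenever $f(m){\downarrow}=u_0$:
\begin{itemize}
\item $\psi(m,u_0,k,g(k)){\downarrow}=0$ for every $k$;
\item $\psi(m,u,k,v){\downarrow}=1$ for every $u\neq u_0$ and $v\neq g(k)$;
\item on the remaining (mismatched) inputs, $\psi$ is unconstrained.
\end{itemize}

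For each $u$, consider the $h$-c.e. relation $Z_u(k,v):\iff\psi(m,u,k,v){\downarrow}=0$. The key asymmetry is this.
\begin{itemize}
\item For a wrong $u\neq u_0$, $Z_u(k,v)$ forces $v=g(k)$. So $Z_u$ is contained in the graph of $g$, and in particular each section $Z_u(k,\cdot)$ has at most one element.
\item For the right $u_0$, $Z_{u_0}$ contains the whole graph of $g$. If every section $Z_{u_0}(k,\cdot)$ had at most one element, then $Z_{u_0}$ would be exactly the graph of $g$. Then $g$ would be computable from $h$, by searching for the unique $v$ with $Z_{u_0}(k,v)$, using $m,u_0$ as fixed nonuniform parameters. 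This contradicts $\mathbf{g}\not\leq\mathbf{h}$.
\end{itemize}
Hence, whenever $f(m){\downarrow}$, some section $Z_{f(m)}(k,\cdot)$ contains two distinct values, and no section of $Z_u$ for $u\neq f(m)$ does.

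This gives an $h$-computable extension of $f$. On input $m$, dovetail over all $u,k,v,v'$ with $v\neq v'$, running the $h$-computations of $\psi(m,u,k,v)$ and $\psi(m,u,k,v')$. Output the first $u$ for which both converge to $0$. By the previous paragraph this halts with output $f(m)$ whenever $f(m){\downarrow}$, so $f\redTOne h$, i.e.\ $\mathbf{f}\leq\mathbf{h}$.

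The main obstacle is choosing $p,q$ correctly. The naive choice, where both programs output the actual values $f(m)$ and $g(n)$, gives no leverage, because $\psi$ carries no information on pairs whose values disagree. Encoding both sides as equality-test bits makes the wrong candidates $u$ produce a sub-graph of $g$, while the correct candidate must produce a genuine "double value" unless $g$ is $h$-computable. The remaining care is in the quantifier order: non-$h$-computability of $g$ is applied separately for each $m$, with the parameters $(m,f(m))$, and the search itself stays uniform in $m$.
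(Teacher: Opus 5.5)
Your proof is correct and is essentially the paper's argument. In both, one runs a uniform $h$-computable search for two agreeing answers of an extension of $f\meetTOne g$ on complementary program pairs, and an input $m_0$ with $f(m_0){\downarrow}$ on which this search fails serves as a nonuniform parameter for computing $g$ from $h$; your assumption $\mathbf{g}\not\leq\mathbf{h}$ is just the contrapositive form of the paper's case split on whether that search halts everywhere on the domain of $f$. The only difference is the encoding: you build equality tests against candidate values $u,v$ directly into $p,q$ and look for two zeros in one section, whereas the paper first reduces to $\{0,1\}$-valued representatives, pairs the identity program $r$ on the $f$ side with $r$ and the negation program $r'$ on the $g$ side, and in the failure case races two computations instead of searching for the unique $v$ with $\psi=0$.
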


\begin{proof}
Let us first prove the “plain” version.

Without loss of generality, we can assume our degrees to be
represented by oracles with values in $\{0, 1\}$, namely
$f\colon\N\dasharrow\{0,1\}$ and $g\colon\N\to\{0,1\}$.  Indeed, it is easy
to check that the T1 degree of a function $f \colon \N \dasharrow \N$
equals the T1 degree of $\tilde f \colon \N \dasharrow \{0, 1\}$
where $\tilde f(\langle m,u\rangle){\downarrow}$ precisely when
$f(m){\downarrow}$, in which case $\tilde f(\langle m,u\rangle) = 1$
if $f(m) = u$ and $\tilde f(\langle m,u\rangle) = 0$ if $f(m) \neq u$
(and of course, if $g$ is total, then so is $\tilde g$, and anyway the
argument is standard in this case).

Now (recall ¶\ref{simplification-of-reducibility}) our assumption
$\mathbf{f} \meetDOne \mathbf{g} = \mathbf{0}$ means that $f \meetTOne
g$ has an extension which is a partial computable function
$\varphi_e$.

Let $r$ be the code of the oracle program that takes an argument,
reads the oracle's value at that argument and directly returns it,
so that $\varphi_r^k = k$ for all $k \colon \N \dasharrow \N$.
Similarly, let $r'$ read the oracle's value $v$ at its argument and
return its opposite $\lnot v := 1-v$ as a boolean (the behavior of $r'$
on non-booleans is irrelevant).

Consider the following putative algorithm $X$ to compute an extension of $f$.
Given an input $m$, we use dovetailing to search
in parallel for $n \in \N$ such that $\varphi_e(\langle r, r, m, n \rangle)$
and $\varphi_e(\langle r, r', m, n \rangle)$ are both defined and equal,
and then return this common value. Let us check the partial correctness
of this algorithm. If it is called on an input $m$ such that $f(m)$
is defined and \emph{if} it finds $n$ in its search, then either $f(m) = g(n)$, in
which case $\varphi_e(\langle r, r, m, n \rangle){\downarrow} = f(m)$,
or $f(m) = \lnot g(n)$, in which case
$\varphi_e(\langle r, r', m, n \rangle){\downarrow} = f(m)$;
in both cases, the algorithm's return value is correct. (The hypothesis
that $\mathbf{g}$ is T0 is being used here to ensure that $g(n)$ is
defined, allowing the case distinction.)
Hence, if the algorithm $X$ terminates on all inputs where $f$ is defined, then
$\mathbf{f} = \mathbf{0}$.

Now assume that $m_0$ is an input such that $f(m_0)$ is defined but the
algorithm $X$ just described does not terminate. In this case, we claim that
$\mathbf{g} = \mathbf{0}$ because the following other algorithm $Y$ computes $g$:
given an input $n$, compute in parallel $\varphi_e(\langle r, r, m_0, n \rangle)$
and $\lnot \varphi_e(\langle r, r', m_0, n \rangle)$ until one of the two
terminates, and then return it. The assumption that the previous putative
algorithm $X$ for $f$ does not terminate on $m_0$ means that the values
$\varphi_e(\langle r, r, m_0, n \rangle)$ and
$\lnot \varphi_e(\langle r, r', m_0, n\rangle)$ can never be both defined and
different, so it suffices to see that one of them must be defined
and correct. If $f(m_0) = g(n)$, then $\varphi_e(\langle r, r, m_0, n\rangle)$
is, whereas if $f(m_0) = \lnot g(n)$, then
$\lnot \varphi_e(\langle r, r', m_0, n\rangle)$ is.

This concludes the proof of the “plain” version of the theorem.

\medskip

To prove the “relativized” version, we make the following observation:
the proof of the “plain” version which we have just given applies just
as well if we replace computability by computability relative to an
ordinary Turing oracle $h \colon \N \to \N$ (so we can replace
$\varphi_e$ by $\varphi_e^h$ throughout).
\end{proof}

\begin{rmk}\label{remark-on-t1-meet}
The hypothesis that $\mathbf{h}$ be T0 was used in the proof of the
relativized version in order to run computations in parallel.
But let us explain why it is indeed necessary for the conclusion to hold.
Indeed, if $\mathbf{f}$ and $\mathbf{g}$
are incomparable T0 degrees (which exist by the classical Kleene–Post
theorem \cite[theorem 6.1.1]{Soare}), and if we let $\mathbf{h} :=
\mathbf{f} \meetDOne \mathbf{g}$ (which is a T1 degree) then
$\mathbf{f} \meetDOne \mathbf{g} \leq \mathbf{h}$
holds trivially, so if we could apply the proposition to $\mathbf{h}$
being a T1 degree we would get either
$\mathbf{f} \leq \mathbf{h} \leq \mathbf{g}$ or $\mathbf{g} \leq
\mathbf{h} \leq \mathbf{f}$, contradicting the assumption that
$\mathbf{f},\mathbf{g}$ are incomparable.

We similarly observe that, still working under the assumption that
$\mathbf{f}$ and $\mathbf{g}$ are incomparable T0 degrees
even if the meet $\mathbf{f} \meetDZero \mathbf{g}$ happens to exist,
the obvious inequality $\mathbf{f} \meetDZero \mathbf{g} \leq
\mathbf{f} \meetDOne \mathbf{g}$ \emph{must} be strict.  Indeed,
otherwise, applying the proposition to $\mathbf{h} :=
\mathbf{f} \meetDZero \mathbf{g}$ (which we assumed exists), then
$\mathbf{f} \meetDOne \mathbf{g} \leq \mathbf{h}$
would imply $\mathbf{f} \leq \mathbf{h} \leq \mathbf{g}$
or $\mathbf{g} \leq \mathbf{h} \leq \mathbf{f}$, again contradicting
the assumption that $\mathbf{f},\mathbf{g}$ are incomparable.

On the other hand, we do not know whether the assumption that
$\mathbf{g}$ be T0 (which was used in the proof) is truly essential
(see \cref{question-computability-of-t1-meet}).
\end{rmk}

\bigskip

Although we will not do much with it, we describe the meet on the
T2 and T3 levels for completeness.  (Recall that $U \sqcup V :=
\{\langle 0, u \rangle \mid u \in U\} \cup \{\langle 1, v \rangle \mid
v\in V\}$.)

\begin{prop}\label{meet-of-t2-degrees}
The partially ordered sets $\D_{\mathrm{T2}}$ and $\D_{\mathrm{T3}}$ have all binary meets,
and the embedding $\D_{\mathrm{T2}} \hookrightarrow \D_{\mathrm{T3}}$ preserves them.
On the T2 level, an explicit construction is the following:
for T2 oracles $f, g \colon \N \dasharrow \P(\N)$, we have
$[f]_\TTwo \meetDTwo [g]_\TTwo = [f \meetTTwo g]_\TTwo$
where $f \meetTTwo g$ is defined on $\langle m, n \rangle$ exactly
when $f(m)$ and $g(n)$ both are and is then equal to $f(m) \sqcup g(n)$.
On the T3 level, we have the construction
$[f]_\TThree \meetDThree [g]_\TThree = [f \meetTThree g]_\TThree$
where $(f \meetTThree g)(\langle m, n \rangle)$ is again defined
when $f(m)$ and $g(n)$ both are, and then equal to
$\{U \sqcup V \mid U \in f(m), V \in g(n)\}$.
\end{prop}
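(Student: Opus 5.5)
The plan is to prove the T3 statement first by checking the universal property of a meet, and then to derive the T2 statement from it. Write $h := f \meetTThree g$, so that
\[ h(\langle m,n\rangle) = \{U\sqcup V \mid U\in f(m),\ V\in g(n)\}. \]
Note that $h(\langle m,n\rangle)$ is inhabited exactly when both $f(m)$ and $g(n)$ are.

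The first step is $h \redTThree f$, and symmetrically $h \redTThree g$. Take an initial configuration $(\langle m,n\rangle, U\sqcup V)$. Arthur queries $f$ at $m$, and Nimue plays $U$; this move is legal since $U\in f(m)$. Merlin replies with some $u\in U$. Arthur then declares $\langle 0,u\rangle$, which lies in $U\sqcup V$. One detail needs care: the secret $U\sqcup V$ might not determine $U$ uniquely, but Nimue can fix any decomposition, so this causes no problem.

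The second step is the greatest lower bound property. Suppose $k\redTThree f$ via an Arthur+Nimue strategy $(\alpha,\nu)$ and $k\redTThree g$ via $(\beta,\mu)$. We must show $k\redTThree h$. Given a configuration $(p,K)$ with $K\in k(p)$, Arthur runs $\alpha$ and $\beta$ in parallel, interleaving them. Each query consists of a single question to $h$, and the reply tells Arthur which side to follow. Concretely, Arthur runs $\alpha$ until it issues a query $m$ and $\beta$ until it issues a query $n$, and then asks $h$ at $\langle m,n\rangle$. Nimue plays $U\sqcup V$, where $U$ is $\nu$'s move and $V$ is $\mu$'s move in the respective simulated games. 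Merlin answers either $\langle 0,u\rangle$, which feeds $u$ to the $\alpha$-game, or $\langle 1,v\rangle$, which feeds $v$ to the $\beta$-game. The other game remains suspended with its pending query. On the next round Arthur re-asks with the same pending query for the suspended side, and Nimue reuses that side's pending move. As soon as either simulated strategy declares a final value, Arthur declares it, and it lies in $K$ because both strategies are winning.

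The main obstacle is termination and legality in this second step. When one side, say $\alpha$, has already declared while $\beta$ is still waiting, Arthur simply stops, so that case is fine. If neither side has declared, each side has issued a legal query, since the simulated strategies are winning and so never make illegal moves. Hence $f(m)$ and $g(n)$ are inhabited, and so $h(\langle m,n\rangle)$ is inhabited. Merlin's answers then advance one of the two games by exactly one move, and each simulated play is a play of the original game consistent with its winning strategy. A play consistent with a winning strategy must end after finitely many moves. Therefore one of the two simulations must end after finitely many rounds, since otherwise one of them would receive infinitely many answers while never declaring. Merlin also cannot choose a side whose set is empty: an empty $U$ makes him lose, which is consistent with the simulated game where he would lose too.

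For the T2 level, $f\meetTTwo g$ viewed as a T3 oracle is exactly $f\meetTThree g$ when $f$ and $g$ are T2 oracles. Both sides are defined precisely when $f(m)$ and $g(n)$ are, and there they equal $\{f(m)\sqcup g(n)\}$ and $\{\{f(m)\}\}$-style singletons respectively, giving $\{f(m)\sqcup g(n)\}$. So the T3 meet of T2 degrees is itself a T2 degree. It is therefore also the meet in $\D_{\mathrm{T2}}$, and the embedding preserves meets.
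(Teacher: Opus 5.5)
Your proposal is correct and follows the paper's proof essentially step for step. You check the two easy reductions, then interleave the two simulated winning strategies through queries $\langle m,n\rangle$, letting Merlin's tag decide which simulated game advances, and you finish by observing that on T2 oracles $f\meetTThree g$ coincides with $f\meetTTwo g$. Your pigeonhole argument for termination makes precise the paper's ``each move makes progress''. There are two harmless slips: $U\sqcup V$ does determine $U$ and $V$ uniquely, and an empty $U$ merely forces Merlin to answer on the $V$ side; he only loses if $V$ is empty too.
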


In words, when querying $f \meetTTwo g$ or $f \meetTThree g$, Arthur puts
forth two questions, one for $f$ and one for $g$; on the T3 level Nimue
also makes two corresponding moves; then Merlin answers one question of
his choice among the two.  (Symmetrically, of course, when trying to
reduce $f \meetTThree g$ to some other oracle, the initial
configuration will present two challenges, one for $f$ and one
for $g$, and Arthur must eventually declare a value answering one of
these challenges, indicating which.)

\begin{proof}[Proof of \cref{meet-of-t2-degrees}]
It is enough to show that the construction provides the meet in the
T3 degrees, since when it is applied to two T2 oracles it gives a
T2 oracle as described.

The fact that $f \meetTThree g \redTThree f$ and $f \meetTThree g \redTThree g$
is immediately checked. We now assume that $h \redTThree f$ and
$h \redTThree g$ and prove that $h \redTThree f \meetTThree g$.
For this, Arthur and Nimue use the following strategy in the reduction game
of $h$ to $f \meetTThree g$.  They mentally simulate two reduction games
in parallel, one of $h$ to $f$ and one of $h$ to $g$ (for which both
they have winning strategies by assumption), starting with the
same initial configuration (that demanded by the reduction game
of $h$ to $f \meetTThree g$).  Each time it is Arthur's
turn to play in the reduction of $h$ to $f \meetTThree g$,
he consults his strategies in the reduction of $h$ to $f$ and
$h$ to $g$: if either one tells him to declare a final value, then he
does so (taking the value from the strategy that terminates first);
otherwise, both tell him to query the oracle with
questions $m$ (to $f$) and $n$ (to $g$), and he then queries the $f \meetTThree g$
oracle at $\langle m,n\rangle$.  Similarly, when it is Nimue's turn
to play, she performs the moves dictated by her strategies in the
reduction of $h$ to $f$ and $h$ to $g$.  Merlin will then output a
move of the form $\langle 0,u\rangle$ (providing an answer of the $f$
oracle) or $\langle 1,v\rangle$ (providing one of $g$): in the
first case, the move $u$ is assumed to have been played by Merlin in
the reduction game of $h$ to $f$ while the reduction game of
$h$ to $g$ is left unchanged, and the second case is symmetric.  Since
each move makes progress in at least one of the two reduction games,
eventually the game will terminate with Arthur declaring a value,
which must be correct because it is produced by a winning strategy in
one of the two simulated games.
\end{proof}

\begin{rmk}
We could bring the construction of ¶\ref{meet-of-t2-degrees} (say, for
T2 oracles) closer to that of ¶\ref{meet-of-t1-degrees} by allowing
Arthur to put forth a query of the form $\langle p,q,m,n\rangle$ and
Merlin to respond with an element of the labeled disjoint union of the
set of possible values of $\varphi_p^f(m)$ and of $\varphi_q^g(n)$ (a
more precise definition being: $J_f^\natural(p) \sqcup
J_g^\natural(q)$ with the notations
of ¶\ref{definition-oracle-topology-correspondence}), which would make
the above proof easier.  The point here, however, is that, for T2/T3
degrees, we do not \emph{need} to resort to this trick of going
through a “universal” construction $p \mapsto \varphi_p^f(0)$ whereas
for T1 degrees, we \emph{do} need: trying to define the T1 meet by the
function taking $\langle m,n\rangle$ to the common value of
$f(m)$ and $g(n)$ when both are defined or equal, would obviously fail
for silly reasons (e.g., $f$ might take values in $\{0,1\}$ and $g$
in $\{2,3\}$).
\end{rmk}

\begin{rmk}\label{remark-on-t2-versus-t1-meet}
One remarkable result of \cite{KiharaNg1} is that even though
the T1 degrees form a lattice, that lattice is \emph{not distributive}
(\textit{op. cit.}, theorem 4.9), nor even modular (\textit{op. cit.},
theorem 4.10). In contrast, the lattice of T2 or T3 degrees \emph{is} distributive.
For the T3 degrees, anticipating on the point of view exposed in
\cref{section-topos-theory}, this follows from the identification with
Lawvere–Tierney topologies in \cite[corollary 3.5]{KiharaLT} (which we
reproduced as \cref{statement-oracle-topology-correspondence})
combined with the
general fact that the Lawvere–Tierney topologies on an elementary
topos form a Heyting algebra (\cite[example A4.5.14(f)]{JohnstoneElephant};
we will return to this in ¶\ref{digression-heyting-operation})
and in particular a distributive lattice. For the T2 degrees, this
follows from the fact just proved that the inclusion
$\D_{\mathrm{T2}} \hookrightarrow \D_{\mathrm{T3}}$ preserves binary meets and joins.

Alternatively, it can easily be checked, directly from the
descriptions in propositions \ref{join-of-degrees}
and \ref{meet-of-t2-degrees}, that $(f\oplus g) \meetTThree (f\oplus
h) \, \redTThree \, f\oplus (g\meetTThree h)$ holds in
general\footnote{In contrast, it is worth thinking about why
\cref{meet-of-t1-degrees} does \emph{not} permit the same proof to
carry over to T1 degrees.}, which is enough to prove distributivity
for T3 (and hence T2) degrees.

It follows that the inclusion $\D_{\mathrm{T1}} \hookrightarrow \D_{\mathrm{T2}}$ cannot preserve binary
meets and joins, and since it does preserve binary joins
(\cref{join-of-degrees}), we conclude that it does not preserve binary
meets.  So there exist T1 degrees $\mathbf{f},\mathbf{g}$
such that the obvious inequality $\mathbf{f} \meetDOne \mathbf{g} \leq
\mathbf{f} \meetDTwo \mathbf{g}$ is strict.
We do not understand this situation well; see
\cref{question-t2-versus-t1-meet}.
\end{rmk}

We now proceed to prove two propositions concerning the T3 meet that
are very similar to \cref{computability-of-t1-meet} for the T1 meet.
Either one of them will suffice for our purposes (which are very
modest), but since we believe they may be of independent interest we
have chosen to give both.  The reader is advised to pay careful
attention to the hypotheses on $\mathbf{f},\mathbf{g},\mathbf{h}$.

\begin{prop}\label{computability-of-t3-meet}
Let us say that a T3 oracle $g \colon \N \to \P(\P(\N))$ is
\textbf{totally inhabited} when $g(n)$ is inhabited for all $n$.

Let $\mathbf{f}, \mathbf{g} \in \D_{\mathrm{T3}}$ and $\mathbf{h} \in
\D_{\mathrm{T1}}$, where $\mathbf{g}$ is represented by a \emph{totally
inhabited} oracle. If $\mathbf{f} \meetDThree \mathbf{g} \leq \mathbf{h}$, then in fact $\mathbf{f} \leq \mathbf{h}$ or $\mathbf{g} \leq \mathbf{h}$.
\end{prop}
\begin{proof}
Let $\mathbf{f},\mathbf{g},\mathbf{h}$ be represented by functions $f
\colon \N \to \P(\P(\N))$, $g \colon \N \to \P(\P(\N))$ with $g(n)$
inhabited for every $n$, and $h \colon \N \dasharrow \N$ respectively.

In light of \cref{meet-of-t2-degrees}, let us first unwrap what it
means that $f \meetTThree g \redTThree h$.  Note that since $h$ is T1,
Nimue does not play, and we can think of Arthur's strategy $e$ as
defining a partial $h$-computable function $\varphi_e^h$
(cf. ¶\ref{simplification-of-reducibility}), taking as input the
Arthur-visible part $\langle m,n\rangle$ of the initial configuration
for $f \meetTThree g$.  Arthur cannot see the secret part ($F \in
f(m)$ and $G \in g(n)$) of the initial configuration in which he is
supposed to play, so, absent help from Nimue, he will need to play in
$\bigcap_{F\in f(m)} F$ or $\bigcap_{G\in g(n)} G$, depending on which
challenge he chooses to address (we refer
to ¶\ref{when-are-oracles-computable} for the appearance of
$\bigcap_{F\in f(m)} F$ here).  So, the assumption is that there is an
$e$ such that, for all $m,n$ with $f(m)$ inhabited, \emph{either}
“Arthur addresses the $f$ challenge”: $\varphi_e^h(\langle
m,n\rangle){\downarrow} = \langle 0,u\rangle$ with $u \in
\bigcap_{F\in f(m)} F$, \emph{or} “Arthur addresses the $g$ challenge”:
$\varphi_e^h(\langle m,n\rangle){\downarrow} = \langle 1,v\rangle$
with $v \in \bigcap_{G\in g(n)} G$.  The assumption that $g(n)$ is
inhabited for all $n$ ensures that $\varphi_e^h(\langle
m,n\rangle){\downarrow}$ as soon as $f(m)$ is inhabited (with no need
to discuss on $n$).

Now by classical logic (and using the fact that $\varphi_e^h$ is a
single-valued function, defined in the cases we have just said), one
of two things must happen: \emph{either} for all $m$ such that $f(m)$
is inhabited there exists $n$ with $\varphi_e^h(\langle
m,n\rangle){\downarrow} = \langle 0,\emdash\rangle$, \emph{or} there
exists $m_0$ with $f(m_0)$ inhabited such that for all $n$ we have
$\varphi_e^h(\langle m_0,n\rangle){\downarrow} = \langle
1,\emdash\rangle$.

In the first case, the following strategy $X$ shows that $f \redTThree
h$: given $m$, we search sequentially\footnote{We emphasize that we
  are searching \emph{sequentially} here: all the $\varphi_e^h(\langle
  m,n\rangle)$ are guaranteeed to be defined thanks to our assumption
  on $g$, so we do not need to dovetail, which is fortunate as
  dovetailing is not possible under a T1 oracle.} for an $n$ such that
$\varphi_e^h(\langle m,n\rangle){\downarrow} = \langle 0,u\rangle$,
and when such an $n$ is found, we return $u$ (i.e., declare it as
final value).  The assumption of the first case shows precisely that
the loop will terminate and $u \in \bigcap_{F\in f(m)} F$, so this is
a correct strategy for reducing $f$ to $h$.

In the second case, the following strategy $Y$ shows that $g
\redTThree h$: simply compute $\varphi_e^h(\langle
m_0,n\rangle){\downarrow} = \langle 1,v\rangle$ and return $v$ (i.e.,
declare it as final value).  The assumption of the second case shows
precisely that this will give $v \in \bigcap_{G\in g(n)} G$, so this
is a correct strategy for reducing $g$ to $h$.
\end{proof}

\begin{prop}\label{computability-of-meet-of-basic-and-t3-degrees}
If $\mathbf{f} \in \D_{\mathrm{T3}}$, and $\mathbf{g}$ is a
\emph{basic} T3 degree, and $\mathbf{h} \in \D_{\mathrm{T2}}$ are such
that $\mathbf{f} \meetDThree \mathbf{g} \leq \mathbf{h}$, then in fact
$\mathbf{f} \leq \mathbf{h}$ or $\mathbf{g} = \mathbf{0}$.
\end{prop}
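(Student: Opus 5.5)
The idea is that since $\mathbf{h}$ is a T2 degree, Nimue has no real choices in a reduction game to $h$. Everything Arthur does, and everything Merlin is allowed to do, is therefore independent of the secret part of the initial configuration. Arthur's declared answer must then lie in the \emph{intersection} of all possible secret sets, exactly as in ¶\ref{when-are-oracles-computable}, but now relative to $h$.

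\emph{Setup.} Let $f\colon\N\to\P(\P(\N))$ represent $\mathbf{f}$ and $h\colon\N\dasharrow\P(\N)$ represent $\mathbf{h}$. Let $\mathbf{g}$ be represented by the constant oracle with value $\mathscr{G}\in\P(\P(\N))$. If $\bigcap_{G\in\mathscr{G}}G$ is inhabited, then $\mathbf{g}=\mathbf{0}$ and we are done; this includes the case $\mathscr{G}=\varnothing$. So assume $\mathscr{G}$ is inhabited and $\bigcap_{G\in\mathscr{G}}G=\varnothing$. We must show $f\redTThree h$. Fix a winning Arthur+Nimue strategy for the reduction game of $f\meetTThree g$ to $h$. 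In that game, whenever Arthur queries $h$ at some $k$, Nimue's only legal move is to play $h(k)$. If $h(k)$ is undefined, Arthur loses, so a winning strategy never queries there. Hence the Nimue strategy is forced, and the play is determined by Arthur's computable strategy $e$ together with Merlin's replies. Arthur sees only $\langle m,n\rangle$; since $g$ is constant we may fix $n=0$.

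\emph{Key observation.} Fix $m$ with $f(m)$ inhabited, and fix a finite sequence of Merlin replies that is legal, each reply lying in the set $h(k)$ for the preceding query $k$. This sequence is legal for \emph{every} secret $F\sqcup G$ with $F\in f(m)$ and $G\in\mathscr{G}$. Moreover, Arthur's moves along it are the same for all such secrets. Suppose that along some legal Merlin play Arthur declares $\langle 1,v\rangle$. Since the strategy is winning against every secret, we get $v\in G$ for all $G\in\mathscr{G}$, contradicting $\bigcap\mathscr{G}=\varnothing$. Hence along every legal play Arthur eventually declares some $\langle 0,u\rangle$, because infinite plays are losses. By the same argument, $u\in\bigcap_{F\in f(m)}F$.

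\emph{The reduction $f\redTThree h$.} In the reduction game of $f$ to $h$ with initial configuration $(m,F)$, Arthur simulates $e$ on $\langle m,0\rangle$. He forwards each query $k$ to the real oracle $h$, feeds Merlin's real reply into the simulation, and when the simulation declares $\langle 0,u\rangle$ he declares $u$. Nimue again plays the forced set $h(k)$. Every real play of Merlin is a legal Merlin play in the simulated game, with secret $F\sqcup G$ for an arbitrary $G\in\mathscr{G}$; this $G$ exists because $\mathscr{G}$ is inhabited. By the observation above, the simulation never queries outside the domain of $h$, never runs forever, never declares a tag-$1$ answer, and declares $u\in F$. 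So this is a winning strategy.

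\emph{Main obstacle.} The only delicate point is making precise the claim that a single Merlin play is simultaneously legal for all secrets and induces identical Arthur moves. This is where the hypothesis that $\mathbf{h}$ is T2 rather than T3 is used. If Nimue had genuine choices depending on $G$, Merlin's legal replies could depend on $G$ and the intersection argument would break. I would state this carefully as a short lemma about plays in reduction games whose target is a T2 oracle.
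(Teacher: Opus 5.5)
Your proof is correct and is essentially the paper's argument. With a T2 target Nimue has no real choices, so Arthur's play does not depend on the secret. Any answer to the $\mathscr{G}$-challenge must then lie in $\bigcap_{G\in\mathscr{G}}G$, and otherwise the strategy always answers the $f$-challenge, which gives $f\redTThree h$; you merely phrase the case split contrapositively and spell out the simulation.

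One small slip: if $h(k)=\varnothing$ for some $k$, a play can also end with Merlin unable to move rather than with Arthur declaring some $\langle 0,u\rangle$. This is harmless, because the forwarded query then wins the real game too (or simply note upfront that $\mathbf{h}=\top$ in that case).
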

\begin{proof}
Let $\mathbf{f},\mathbf{g},\mathbf{h}$ be represented by
$f \colon \N \to \P(\P(\N))$, $\mathscr{G} \in \P(\P(\N))$, and $h \colon
\N \dasharrow \P(\N)$ respectively.

In light of \cref{meet-of-t2-degrees}, let us unwrap what it means that
$f \meetTThree \mathscr{G} \redTThree h$.  The initial configuration
consists of two challenges, one for the $f$ and one for the
$\mathscr{G}$ oracle, and Arthur can choose to address either one: the
first challenge takes the form of a public $m$ and a secret element $F$
of $f(m)$, to which Arthur is supposed to respond by giving an element
of $F$; the second challenge takes the form of a secret element $G$ of
$\mathscr{G}$, to which Arthur is supposed to respond by giving an
element of $G$; Arthur can choose to address either challenge (and must
specify which).  And the hypothesis $f \meetTThree \mathscr{G}
\redTThree h$ is that Arthur has a computable winning strategy for
this game, without Nimue's help (since $h$ is T2), but with the help
of (adverserially non-deterministic) output from the $h$ oracle.  Note
that Arthur does not see anything from the second challenge: all he
sees is the visible part $m$ of the challenge to the $f$ oracle.

Now if there is an initial $m_0$ and a sequence of moves by Merlin
(responding to queries to the $h$ oracle) such that Arthur's strategy
chooses to address the second challenge in this situation, then the
element it produces is in $\bigcap_{G\in\mathscr{G}} G$ because it
cannot depend on the secret $G$ part of the initial configuration
(more precisely: the same play would have unfolded identically for any
element of $\mathscr{G}$).  But this means that
$\bigcap_{G\in\mathscr{G}} G$ is inhabited, i.e.,
$[\mathscr{G}]_{\mathrm{T3}} = \mathbf{0}$
(cf. ¶\ref{when-are-oracles-computable}).  Otherwise, Arthur's
strategy is to \emph{always} address the $f$ oracle (no matter what
the initial configuration and the answers by Merlin), and this shows
$f \redTThree h$.
\end{proof}

\begin{rmk}
Clearly, propositions \ref{computability-of-t1-meet},
\ref{computability-of-t3-meet}
and \ref{computability-of-meet-of-basic-and-t3-degrees} have a common
theme (“under some assumptions, $\mathbf{f} \wedge \mathbf{g}
\leq \mathbf{h}$ can only happen if $\mathbf{f} \leq \mathbf{h}$ or
$\mathbf{g} \leq \mathbf{h}$”) and there is also a common theme to
their proofs.  The deeper meaning of this theme escapes us, however.

Since a basic T3 oracle is either $\varnothing$ (which is of
degree $\mathbf{0}$) or totally inhabited when extended to a constant
function, propositions \ref{computability-of-t3-meet}
and \ref{computability-of-meet-of-basic-and-t3-degrees} have a common
special case (or should we say a “meet”?), namely the case when
$\mathbf{f}$ is T3, $\mathbf{g}$ is a \emph{basic} T3 degree, and
$\mathbf{h}$ is T1.  In fact, the only case we will really use in the
sequel is the even more special one when $\mathbf{f}$ is T0,
$\mathbf{g}$ is \emph{basic}, and
$\mathbf{h} = \mathbf{0}$, so either proposition suffices.

We may wonder, however, whether the two propositions have a natural common
generalization (or should we say a “join”?).  In light of the first
sentence of the previous paragraph, the obvious candidate would be:
“If $\mathbf{f} \in \D_{\mathrm{T3}}$, and $\mathbf{g} \in
\D_{\mathrm{T3}}$ is represented by a totally inhabited oracle, and
$\mathbf{h} \in \D_{\mathrm{T2}}$ are such that $\mathbf{f}
\meetDThree \mathbf{g} \leq \mathbf{h}$, then in fact $\mathbf{f} \leq
\mathbf{h}$ or $\mathbf{g} \leq \mathbf{h}$.”  This statement,
however, is \emph{false}: indeed, as in \cref{remark-on-t1-meet}, let
$\mathbf{f},\mathbf{g}$ be incomparable T0 degrees, and let
$\mathbf{h} = \mathbf{f} \meetTThree \mathbf{g}$ which is also
$\mathbf{f} \meetTTwo \mathbf{g}$ by \cref{meet-of-t2-degrees}, so it
is a T2 degree: then if the quoted statement were to hold, we could
conclude that either $\mathbf{f} \leq \mathbf{h} \leq \mathbf{g}$ or
$\mathbf{g} \leq \mathbf{h} \leq \mathbf{f}$, contradicting the
assumption that $\mathbf{f},\mathbf{g}$ are incomparable.
\end{rmk}

\thingy\label{digression-heyting-operation}
\textit{A digression.} Even though this will play only a very minor
role in what we say below, we would be remiss if we did not at least
mention the \textbf{Heyting operation} on the T3 degrees.  As we
mentioned earlier, $\mathscr{D}_\TThree$ is a Heyting algebra: this
follows from the identification with Lawvere–Tierney topologies in
\cite[corollary 3.5]{KiharaLT}, which we reproduced
as \cref{statement-oracle-topology-correspondence} below, and the
general fact that the Lawvere–Tierney topologies on a topos form a
Heyting algebra (\cite[example A4.5.14(f)]{JohnstoneElephant}).  This
can certainly be verified more directly, however: we have not done
this, but we can at least sketch a concrete description of the Heyting
operation $\mathbf{g} \mathbin{\Rightarrow_\TThree} \mathbf{h}$ on
T3 degrees, which by definition is the largest degree $\mathbf{f}$
such that $\mathbf{f} \meetDThree \mathbf{g} \leq \mathbf{h}$.  We
will, however, leave it as an exercise to check that this description
is indeed correct.

Specifically, if $g,h$ are T3 oracles representing
$\mathbf{g},\mathbf{h}$, then $\mathbf{g}
\mathbin{\Rightarrow_\TThree} \mathbf{h}$ is represented by a
T3 oracle $s$ described as follows.  First, we consider a slightly
modified form of our reduction game of $g$ to $h$ in that Arthur is now
additionally allowed to play at any point a special “surrender” move,
which takes an arbitrary value $w\in\N$ as parameter, ending the game
and causing Merlin to win.  Let us say that a “valid” Arthur+Nimue
strategy in this game (a weaker condition than a “winning” strategy)
is one where, against any legal moves by Merlin, Arthur and Nimue
either win or end with Arthur surrendering in finite time (in other
words, the strategy always produces a legal move, never leaving Nimue
in a position where she cannot move, and the game always terminates in
finite time); equivalently, a “valid” Arthur+Nimue strategy is just a
winning one if we fictitiously treat surrender as winning for
Arthur+Nimue (making the game very boring!).  Obviously there are
always valid Arthur+Nimue strategies, but there are only winning ones
iff $g \redTThree h$ (by definition).

Now we let $\check s(e,\sigma) \in \P(\N)$ be defined as follows: if
$(e,\sigma)$ is a valid Arthur+Nimue strategy (as we have just
defined), with $e \in \N$ being the (Gödel code of the) Arthur part
and $\sigma$ the Nimue part, we let $\check s(e,\sigma)$ be the the
set of $w\in\N$ such that there is at least one sequence of moves by
Merlin that causes Arthur to surrender with parameter $w$.  (If
$(e,\sigma)$ is \emph{not} a valid Arthur+Nimue strategy, then $\check
s(e,\sigma)$ is left undefined.  If $(e,\sigma)$ \emph{is} valid but
there are no case where Arthur surrenders, then $\check s(e,\sigma) =
\varnothing$.)  Finally\footnote{What we are doing here is that we
have defined our T3 oracle though its presentation as a function
$\check s\colon \mathbb{N}\times\Lambda \dasharrow
\mathcal{P}(\mathbb{N})$ as explained in
¶\ref{differences-with-kihara-setup} (for once, this is more
convenient), and we convert it back to our usual presentation
$\N\to\P(\P(\N))$.}, we let $s(e)$ be the set of the $\check
s(e,\sigma)$ for all $\sigma$ such that $(e,\sigma)$ is a valid
Arthur+Nimue strategy (if there are no such $\sigma$, then $s(e)$ will
be $\varnothing$; but if there is $\sigma$ such that $\check
s(e,\sigma) = \varnothing$ then $s(e)$ will be defined and contain
$\varnothing$).  This defines our $s\colon \N \to \P(\P(\N))$.

More informally, $s$ takes a possibly-surrendering (but otherwise
valid) Arthur strategy $e$ for the reduction of $g$ to $h$, and
returns a value $w$ with which it might actually
surrender.  (If there is none, meaning $g \redTThree h$, then for
some $e$, Arthur will never give up, never surrender: then
$\varnothing \in s(e)$ so $[s]_\TThree = \top$.)

Using the description of T3 meets given in \cref{meet-of-t2-degrees},
one can then check that $f \meetTThree g \redTThree h$
iff $f \redTThree s$, so that $s$ indeed represents $\mathbf{g}
\mathbin{\Rightarrow_\TThree} \mathbf{h}$.

% It is conceivable that I phrased the above paragraph deliberately in
% such a way as to place the quote “Never give up! Never surrender!”
% (from ‘Galaxy Quest’) in a serious math paper.
% <U+1F605 SMILING FACE WITH OPEN MOUTH AND COLD SWEAT>

\medskip

Using this Heyting operation, propositions
\ref{computability-of-t3-meet}
and \ref{computability-of-meet-of-basic-and-t3-degrees} can be
expressed as follows:
\begin{itemize}
\item If $\mathbf{g}$ is totally inhabited and $\mathbf{h}$ is T1,
  then $\mathbf{g} \mathbin{\Rightarrow_\TThree} \mathbf{h}$ either
  equals $\mathbf{h}$ or $\top$.
\item If $\mathbf{g}$ is basic and $\mathbf{h}$ is T2, the same
  conclusion holds.
\end{itemize}

\section{Decoding and coding degrees associated to promise problems}\label{section-decoding-and-coding-degrees}

\begin{dfn}\label{definition-coding-decoding-degrees}
Let $P,Q\subseteq\mathbb{N}$ be inhabited and disjoint.  We define:
\begin{itemize}
\item The \textbf{decoding oracle} associated to $(P,Q)$ to be the T1
  oracle defined by the partial function $\mathbb{N} \dasharrow
  \mathbb{N}$ that takes the value $0$ on $P$ and $1$ on $Q$ and is
  undefined otherwise.  We denote by $\mathbf{d}_{P,Q}$ the
  corresponding (T1) degree, and sometimes, by abuse of language, the
  decoding oracle itself.
\item The \textbf{coding oracle} associated to $(P,Q)$ to be the basic
  T3 oracle defined with the element $\{P,Q\}$ of
  $\mathcal{P}(\mathcal{P}(\mathbb{N}))$.  We denote by
  $\mathbf{c}_{P,Q}$ the corresponding (T3) degree, and sometimes, by
  abuse of language, the coding oracle itself.
\end{itemize}
\end{dfn}

\thingy In more informal terms: the decoding oracle associated to
$(P,Q)$ takes an element of one of those sets and tells Arthur which
one of them it is in, whereas the coding oracle provides Arthur with
the answer to an arbitrary “yes”/“no” question (i.e. communicates an
arbitrary bit chosen by Nimue) but encodes it by giving him an element
of $P$ for 0 and $Q$ for 1.

Thus, the setup of our opening riddle (¶\ref{intro-riddles}) concerns
the coding oracles $\mathbf{c}_{P, Q}$ where first $P := \bar K$ and
$Q := K$, and then $P := (K\times K) \,\cup\, (\bar K \times \bar K)$ and
$Q := (K\times \bar K) \,\cup\, (\bar K \times K)$, where $K := \{e \mid
\varphi_e(0){\downarrow}\}$ is the halting set and $\bar K :=
\mathbb{N} \setminus K$ its complement.  In the first part of the
riddle we explained, in effect\footnote{A rigorous justification will
come from combining
\cref{easy-example-decoding-checker-inequality-can-be-strict} and
\cref{checker-coding-is-same-as-plain-coding}(ii) below.},
that $\mathbf{c}_{\bar K, K} = \Om$.
We will answer the second part (showing that then $\mathbf{c}_{P, Q}
< \Om$) in ¶\ref{answer-to-riddle}.

\thingy\label{promise-problem} In the context of complexity theory,
the problem of deciding, given an element guaranteed to be in
$P \cup Q$, whether it is in $P$ or in $Q$, is known as a
\textbf{promise problem}.  We might therefore say that the decoding
oracle associated to $(P,Q)$ solves the corresponding promise problem,
whereas the coding oracle solves an arbitrary question but presents
its answer as an instance of the promise problem. In particular,
$\mathbf{d}_{P,Q} = \mathbf{0}$ means precisely that the promise
problem $(P,Q)$ is \textbf{decidable} or “computably solvable”.

\begin{exm}\label{example-set-and-complement}
If $P$ and $Q$ are complement of one another, then $\mathbf{d}_{P,Q} =
[P]_{\mathrm{T0}} = [Q]_{\mathrm{T0}}$.

In general, we have $\mathbf{d}_{P,Q} \leq [P]_{\mathrm{T0}}$ and,
symmetrically, $\mathbf{d}_{P,Q} \leq [Q]_{\mathrm{T0}}$ (which we can
put together as: $\mathbf{d}_{P,Q} \leq [P]_{\mathrm{T0}} \meetDOne
[Q]_{\mathrm{T0}}$).
\end{exm}

Let us immediatly record the following easy but crucial fact:

\begin{prop}\label{coding-and-decoding-give-omniscience}
We have $\mathbf{d}_{P,Q} \vee \mathbf{c}_{P,Q} = \Om$.  (In
particular, if $\mathbf{d}_{P,Q} = \mathbf{0}$ then $\mathbf{c}_{P,Q}
= \Om$.)
\end{prop}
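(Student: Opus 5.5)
We prove the two inequalities separately. For $\mathbf{d}_{P,Q} \vee \mathbf{c}_{P,Q} \leq \Om$, we invoke the dichotomy of ¶\ref{reduction-from-om}. Neither oracle has $\varnothing$ among its possible moves for Nimue. The decoding oracle $d$ takes values $\{\{0\}\}$, $\{\{1\}\}$ or $\varnothing$. The coding oracle is $\{P,Q\}$ with $P,Q$ inhabited by assumption. Hence each is $\leq \Om$, and so is their join, since $\Om$ is an upper bound of both.

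For the reverse inequality $\Om \leq \mathbf{d}_{P,Q} \vee \mathbf{c}_{P,Q}$, we use \cref{exercise-reduction-to-a-bit}: it suffices to reduce $\{\{0\},\{1\}\}$ to $d \oplus \{P,Q\}$, where the latter is seen as a T3 oracle via \cref{join-of-degrees}. The strategy is as follows. The initial configuration has a secret $\{b\}$ with $b\in\{0,1\}$. Arthur first queries the coding component, i.e.\ the oracle at an odd number. Nimue plays $P$ if $b=0$ and $Q$ if $b=1$. Merlin must then return some $v$ in the chosen set, which is inhabited, so Merlin has a legal move. Arthur next queries the decoding component at $v$, i.e.\ at $2v$. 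This query is legal because $v \in P \cup Q$, so $d(v)$ is defined and the T3 oracle value is inhabited. Nimue and Merlin then have forced moves, and Arthur receives $0$ if $v\in P$ and $1$ if $v \in Q$. Since $P$ and $Q$ are disjoint, this value equals $b$, and Arthur declares it. Arthur's part of the strategy is plainly computable.

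The parenthetical consequence is immediate: if $\mathbf{d}_{P,Q}=\mathbf{0}$, then the join equals $\mathbf{c}_{P,Q}$. There is no real obstacle here. The only points needing care are the legality of each query (inhabitedness of the oracle values, which uses that $P$ and $Q$ are inhabited) and the use of disjointness to make the decoded bit well defined.
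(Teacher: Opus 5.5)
Your proof is correct and takes the same route as the paper: Nimue encodes the target bit by choosing $P$ or $Q$ at the coding oracle, and Arthur recovers it by passing Merlin's answer to the decoding oracle, with disjointness guaranteeing that the decoded bit is $b$. The only difference is that you also prove the upper bound $\mathbf{d}_{P,Q} \vee \mathbf{c}_{P,Q} \leq \Om$ explicitly, using the dichotomy in \ref{reduction-from-om}; the paper leaves that direction implicit.
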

\begin{proof}
If we have access to both the coding and the
decoding oracle, this allows Nimue to communicate a bit to Arthur
(recall that we explicitly described the case of a reduction from
$\Om$ in ¶\ref{reduction-from-om}, and that communicating a bit is
enough for omniscience by \cref{exercise-reduction-to-a-bit}).

More precisely, the reduction of $\Om = [\{\{0\},\{1\}\}]_{\TThree}$
to $\mathbf{d}_{P,Q} \vee \mathbf{c}_{P,Q}$ goes as follows: if $b$ is
the target bit (which Nimue knows but Arthur does not), then on the
first move, Arthur queries the $\mathbf{c}_{P,Q}$ oracle, Nimue's
strategy plays $P$ if $b=0$ and $Q$ if $b=1$, forcing Merlin to
communicate to Arthur an element of $P$ if $b=0$ and $Q$ if $b=1$; and
on the second move, Arthur queries the $\mathbf{d}_{P,Q}$ oracle at
the value he received on the first move, to deduce the value of $b$.
\end{proof}

\begin{exm}\label{example-disjoint-c-e-sets}
If $P$ and $Q$ are computably enumerable and disjoint, then
$\mathbf{d}_{P,Q} = \mathbf{0}$.  Indeed, to decide whether an element
is in $P$ or in $Q$ under the promise that it be in $P \cup Q$, we
simply enumerate elements of $P$ and of $Q$ in parallel until we find
the one we sought.  It then follows from
\cref{coding-and-decoding-give-omniscience} that
$\mathbf{c}_{P,Q} = \Om$.

The following particular case is of importance (and closely related to
the definition of the $\tilde{\mathbf{d}}_A,\tilde{\mathbf{c}}_A$ we
will give in \cref{definition-codegrees}): the sets $H_0,H_1$ where
$H_i := \{e \mid \varphi_e(0){\downarrow} = i\}$ are computably
enumerable and disjoint, so $\mathbf{d}_{H_0,H_1} = \mathbf{0}$ (we
will generalize this
in \cref{decoding-relative-halting-is-turing-degree}).  This example
shows that the inequality $\mathbf{d}_{P,Q} \leq [P]_{\mathrm{T0}}
\meetDOne [Q]_{\mathrm{T0}}$ mentioned
in \cref{example-set-and-complement} can be strict, since here
$[H_0]_{\mathrm{T0}} = [H_1]_{\mathrm{T0}} = \mathbf{0}'$.
\end{exm}

\begin{warn}\label{warning-computably-separable}
To avoid possible confusion, we emphasize that the assertion that the
promise problem $(P,Q)$ is decidable is different from the
stronger statement that $P$ and $Q$ are \textbf{computably separable},
which means that there is a \emph{total} computable function taking the
value $0$ on $P$ and $1$ on $Q$.

A counterexample to the equivalence is provided by the sets $H_0,H_1$
of \cref{example-disjoint-c-e-sets}, which are well-known to be
computably inseparable (\cite[exercise 1.6.26]{Soare}), yet we have
just pointed out that the promise problem $(H_0,H_1)$ is decidable.

\thingy\label{digression-separating-degree}
\textit{A digression.} While the T1 degree $\mathbf{d}_{P,Q}$
expresses the extent to which the promise problem is decidable,
there is also a degree naturally associated to
the question of whether $P,Q$ are computably separable, which we might
call $\mathbf{s}_{P,Q}$; this time, it is a T2 degree.  It is defined by the
function $\mathbb{N} \dasharrow \mathcal{P}(\mathbb{N})$ taking the
value $\{0\}$ on $P$, the value $\{1\}$ on $Q$, and $\{0,1\}$
elsewhere.  The special case $\mathbf{s}_{H_0,H_1}$ has been called
$\mathtt{LLPO}$ in \cite[example 2.5]{KiharaLT}, and we have mentioned
it in the introduction (¶\ref{intro-context}), in connection with the
“PA degrees”: it is $>\mathbf{0}$ because $H_0,H_1$ are computably
inseparable, but also $<\mathbf{0}'$ because the low basis theorem
(\cite[theorem 3.7.2]{Soare}) guarantees that there are Turing degrees
$\geq\mathbf{s}_{H_0,H_1}$ which are $<\mathbf{0}'$.  So, unlike
$\mathbf{d}_{H_0,H_1}$, which is $\mathbf{0}$ as have just said, the
T2 degree $\mathbf{s}_{H_0,H_1}$ is strictly intermediate between
$\mathbf{0}$ and $\mathbf{0}'$.  In general, we have $\mathbf{d}_{P,Q}
\leq \mathbf{s}_{P,Q} \leq [P]_\TZero \meetDTwo [Q]_\TZero$, and
as we have just said both of these inequalities can be strict, but we
know little else to say about the relation between these degrees.
\end{warn}

\begin{rmk}\label{degenerate-cases-of-decoding-and-coding-degrees}
We could define $\mathbf{d}_{P,Q}$ without the assumption that
$P$ and $Q$ are inhabited, but it is trivial (i.e., computable) if
$P=\varnothing$ or $Q=\varnothing$, and we have chosen to exclude this
case to avoid annoying case distinctions.  One might also attempt to
define $\mathbf{d}_{P,Q}$ without the assumption that $P$ and $Q$ are
disjoint, but there are several plausible candidates here; the most
logical one in our context (if we are to keep the
property \ref{coding-and-decoding-give-omniscience}) would
give the T3 degree $\Om$ if $P\cap Q$ is inhabited: this is not
particularly useful, so we do not do this.

Symmetrically, we could define $\mathbf{c}_{P,Q}$ without the
assumption that $P$ and $Q$ are inhabited, but it the top degree if
$P=\varnothing$ or $Q=\varnothing$; we could also define it without
the assumption that $P$ and $Q$ are disjoint, but when $P\cap Q$ is
inhabited it is trivial (i.e., computable) because Merlin can always
play an element of $P\cap Q$, making the oracle useless.
\end{rmk}

\bigskip

Let us now turn our attention to many-to-one reduction of promise
problems, which, as the next two propositions show, is an easy way to
obtain inequalities among the $\mathbf{d}_{P,Q}$ and
$\mathbf{c}_{P,Q}$ degrees:

\begin{dfn}\label{definition-many-to-one-reduction}
Let $P_1,Q_1 \subseteq \mathbb{N}$ be inhabited and disjoint, and
similarly for $P_2,Q_2 \subseteq \mathbb{N}$.  We say that the promise
problem $(P_2,Q_2)$ is \textbf{many-to-one reducible} to the promise
problem $(P_1,Q_1)$ and we write $(P_2,Q_2)
\mathrel{\preceq_{\mathrm{m}}} (P_1,Q_1)$, when there exists
$f\colon\mathbb{N}\dasharrow\mathbb{N}$ partial computable, defined at
least on $P_2\cup Q_2$, and such that $f(P_2) \subseteq P_1$ and
$f(Q_2) \subseteq Q_1$.
\end{dfn}

\begin{prop}\label{many-to-one-reduction-implies-decoding-oracles-reduction}
The reducibility $(P_2,Q_2) \mathrel{\preceq_{\mathrm{m}}} (P_1,Q_1)$
implies $\mathbf{d}_{P_2,Q_2} \leq \mathbf{d}_{P_1,Q_1}$ (as
T1 degrees).
\end{prop}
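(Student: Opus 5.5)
The plan is to exhibit the reduction directly, using the description of T1 reducibility in the second paragraph of ¶\ref{simplification-of-reducibility}. There, $f \redTOne g$ means that some oracle program $\varphi_e^g$ extends $f$, with the proviso that a query outside the domain of $g$ makes the computation fail. Write $\delta_1$ and $\delta_2$ for the decoding oracles of $(P_1,Q_1)$ and $(P_2,Q_2)$. So I need an index $e$ such that $\varphi_e^{\delta_1}(x){\downarrow} = \delta_2(x)$ for every $x \in P_2 \cup Q_2$.

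Let $f$ be the partial computable function witnessing $(P_2,Q_2) \mathrel{\preceq_{\mathrm{m}}} (P_1,Q_1)$. The program $e$ runs as follows on input $x$:
\begin{enumerate}
\item compute $y := f(x)$;
\item query $\delta_1$ at $y$;
\item return the resulting bit unchanged.
\end{enumerate}

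To check correctness, fix $x \in P_2 \cup Q_2$. Then $f(x)$ is defined by hypothesis, so the first step terminates. If $x \in P_2$ then $y \in P_1$, so $\delta_1(y){\downarrow} = 0 = \delta_2(x)$. If $x \in Q_2$ then $y \in Q_1$, so $\delta_1(y){\downarrow} = 1 = \delta_2(x)$. In either case the query lands inside the domain of $\delta_1$, so it is legal, and the output is correct.

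Outside $P_2 \cup Q_2$ the program may diverge or make an illegal query, but this does not matter: T1 reducibility only asks for an \emph{extension} of $\delta_2$. This yields $\mathbf{d}_{P_2,Q_2} \leq \mathbf{d}_{P_1,Q_1}$.

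There is no real obstacle here. The only point needing care is that the query at $f(x)$ must lie in $P_1 \cup Q_1$, since in the T1 setting an illegal query makes Arthur lose. This is exactly what the conditions $f(P_2)\subseteq P_1$ and $f(Q_2)\subseteq Q_1$ guarantee.
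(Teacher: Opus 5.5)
Your proposal is correct and follows the same argument as the paper: apply the many-to-one reduction, then make a single query to the decoding oracle of $(P_1,Q_1)$ at the image point. The paper states this in one line; your explicit check that the query lands in $P_1\cup Q_1$, so it is legal in the T1 setting, is the point the paper leaves implicit.
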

\begin{proof}
Given $f\colon\mathbb{N}\dasharrow\mathbb{N}$ partial computable,
defined at least on $P_2\cup Q_2$, such that $f(P_2) \subseteq P_1$
and $f(Q_2) \subseteq Q_1$, to compute the decoding oracle associated
to $(P_2,Q_2)$ from that associated to $(P_1,Q_1)$, we simply apply
the latter on the image by $f$ of the given element in $P_2\cup Q_2$.
\end{proof}

\begin{prop}\label{many-to-one-reduction-implies-coding-oracles-reduction}
The reduction $(P_2,Q_2) \mathrel{\preceq_{\mathrm{m}}} (P_1,Q_1)$
implies $\mathbf{c}_{P_1,Q_1} \leq \mathbf{c}_{P_2,Q_2}$ (as
T3 degrees).
\end{prop}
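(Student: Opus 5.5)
We build a winning Arthur+Nimue strategy in the reduction game of the basic oracle $\{P_1,Q_1\}$ to the basic oracle $\{P_2,Q_2\}$, using a partial computable $f$ as in \cref{definition-many-to-one-reduction}: defined on $P_2\cup Q_2$, with $f(P_2)\subseteq P_1$ and $f(Q_2)\subseteq Q_1$. The idea is that Nimue's bit is passed through the $(P_2,Q_2)$ encoding, and Arthur converts it to the $(P_1,Q_1)$ encoding by applying $f$.

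The initial configuration consists of an arbitrary public $m$ (irrelevant, since the oracle is basic) and a secret $F\in\{P_1,Q_1\}$. The strategy runs as follows.
\begin{enumerate}
\item Arthur queries the oracle once. Since $\{P_2,Q_2\}$ is inhabited, this is a legal move, and he may query any $n$, say $n=0$.
\item Nimue plays $P_2$ if $F=P_1$, and $Q_2$ if $F=Q_1$. This is legal because both sets lie in $\{P_2,Q_2\}$. They are inhabited, so Merlin does not lose outright; he must reply with some $v$ in the chosen set.
\item Arthur computes $f(v)$ and declares it as the final value.
\end{enumerate}

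To check that this wins, note that $v\in P_2\cup Q_2$, so $f(v)$ is defined and the computation terminates. If $F=P_1$, then $v\in P_2$, hence $f(v)\in P_1=F$. Symmetrically, if $F=Q_1$, then $f(v)\in Q_1=F$. Arthur's strategy is partial computable because $f$ is. Nimue's strategy depends only on $F$, which she is allowed to see.

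One degenerate point needs care. If $P_1=Q_1$, Nimue's choice would not be well defined, but $P_1$ and $Q_1$ are disjoint and inhabited, so they are distinct. Otherwise there is no real obstacle; the argument amounts to verifying the rules of \cref{definition-reduction-game}.
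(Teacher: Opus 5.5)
Your proposal is correct and follows essentially the same argument as the paper. Arthur makes one query to the $(P_2,Q_2)$ coding oracle, Nimue relays her bit by playing $P_2$ or $Q_2$, and Arthur declares $f$ applied to Merlin's reply. Your proposal simply spells out the legality checks that the paper leaves implicit.
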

\begin{proof}
Given $f\colon\mathbb{N}\dasharrow\mathbb{N}$ partial computable,
defined at least on $P_2\cup Q_2$, such that $f(P_2) \subseteq P_1$
and $f(Q_2) \subseteq Q_1$, to compute the coding oracle associated to
$(P_1,Q_1)$ from that associated to $(P_2,Q_2)$, we simply apply $f$
to the element in $P_2\cup Q_2$ returned by $(P_2,Q_2)$ coding oracle.
\end{proof}

\bigskip

We now investigate the relationship between the promise problems
$(P,Q)$ and $(P\times Q, \penalty0\, Q\times P)$.  The reason we care about
$(P\times Q, \, Q\times P)$ is that it encapsulates the essence of the
trick used to answer the first riddle in ¶\ref{intro-riddles}: “ask
the question twice, once positively, and then negatively”.  So we wish
to know how $\mathbf{d}_{P\times Q, \, Q\times P}$ relates to
$\mathbf{d}_{P,Q}$, and $\mathbf{c}_{P\times Q, \, Q\times P}$ to
$\mathbf{c}_{P,Q}$.  Concerning the first, we have a very easy
inequality:

\begin{prop}\label{checker-decoding-is-easier-than-plain-decoding}
We have $\mathbf{d}_{P\times Q, \, Q\times P} \leq \mathbf{d}_{P,Q}$.
\end{prop}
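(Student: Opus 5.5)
The natural approach is to exhibit a many-to-one reduction of promise problems and then invoke \cref{many-to-one-reduction-implies-decoding-oracles-reduction}. The plan is to show
\[(P\times Q,\, Q\times P) \mathrel{\preceq_{\mathrm{m}}} (P,Q),\]
from which the inequality $\mathbf{d}_{P\times Q,\,Q\times P} \leq \mathbf{d}_{P,Q}$ follows immediately as T1 degrees.

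For the reduction, take the computable projection $f\colon \langle a,b\rangle \mapsto a$, that is, the first component of the pair. Here we view $P\times Q$ as the set of codes $\langle a,b\rangle$ with $a\in P$, $b\in Q$, using the fixed Gödel pairing. This $f$ is total computable, so it is in particular defined on $(P\times Q)\cup(Q\times P)$. If $\langle a,b\rangle\in P\times Q$ then $f(\langle a,b\rangle)=a\in P$, and if $\langle a,b\rangle\in Q\times P$ then $f(\langle a,b\rangle)=a\in Q$. Hence $f(P\times Q)\subseteq P$ and $f(Q\times P)\subseteq Q$, as \cref{definition-many-to-one-reduction} requires.

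We also need to check the standing hypotheses: the sets $P\times Q$ and $Q\times P$ must be inhabited and disjoint. They are inhabited because $P$ and $Q$ are. They are disjoint because a common element $\langle a,b\rangle$ would force $a\in P\cap Q$, contradicting the disjointness of $P$ and $Q$.

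There is no real obstacle here. The only point needing care is this verification of the hypotheses, so that $\mathbf{d}_{P\times Q,\,Q\times P}$ is defined and \cref{many-to-one-reduction-implies-decoding-oracles-reduction} applies. Informally, to decode a pair one simply decodes its first coordinate with the $(P,Q)$ oracle.
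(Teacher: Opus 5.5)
Your proof is correct and is the same as the paper's: the first projection witnesses a many-to-one reduction between the two promise problems, and \cref{many-to-one-reduction-implies-decoding-oracles-reduction} then gives the inequality. You also have the direction right, $(P\times Q,\,Q\times P) \mathrel{\preceq_{\mathrm{m}}} (P,Q)$, whereas the paper's one-line proof writes this reduction the other way round, which is a slip with respect to \cref{definition-many-to-one-reduction}.
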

\begin{proof}
This follows from
\cref{many-to-one-reduction-implies-decoding-oracles-reduction}
since the first projection witnesses $(P,Q)
\mathrel{\preceq_{\mathrm{m}}} (P\times Q, \penalty0\, Q\times P)$.
\end{proof}

\begin{exm}\label{easy-example-decoding-checker-inequality-can-be-strict}
This above inequality can be strict.  To illustrate this, let $P$ be a
computably enumerable set, and $Q$ be its complement.  Then
$\mathbf{d}_{P,Q}$ is the Turing degree of $P$ (as we have already
noted in \cref{example-set-and-complement}), which can certainly
be $>\mathbf{0}$.  However, $\mathbf{d}_{P\times Q, \, Q\times P} =
\mathbf{0}$: indeed, to decide whether a pair promised to be in
$(P\times Q) \,\cup\, (Q\times P)$ is in $P\times Q$ or $Q\times P$, we
just enumerate $P$ until we find the first or the second coordinate of
our pair.

The reasoning of the previous sentence applies, more generally, when
$P$ is a computably enumerable set, and $Q$ is disjoint from $P$
(i.e., an arbitrary \emph{subset} of its complement): so we have
$\mathbf{d}_{P\times Q, \, Q\times P} = \mathbf{0}$ in all such cases
(although little can then be said about $\mathbf{d}_{P,Q}$: compare
with \cref{example-disjoint-c-e-sets}).
\end{exm}

\begin{exm}\label{example-dedekind-cuts}
A different example is also worth giving.  Let $P$ and
$Q$ be respectively the lower and upper Dedekind cut of a real number
$x$, i.e., $P := \{r \in \mathbb{Q} \mid r < x\}$ and
$Q := \{r \in \mathbb{Q} \mid r > x\}$ (after having chosen a fixed
Gödel encoding of rationals by natural numbers). Then $\mathbf{d}_{P, Q}$
is the Turing degree of $x$, which can be arbitrary, while
$\mathbf{d}_{P \times Q, \, Q \times P}$ is decidable simply by comparing
the two rationals in the pair.  This example shows that we cannot even
hope to bound $\mathbf{d}_{P, Q}$ as a function of $\mathbf{d}_{P
  \times Q, \, Q \times P}$.
\end{exm}

The relation of $\mathbf{c}_{P\times Q, \, Q\times P}$ to
$\mathbf{c}_{P,Q}$ is easier — they are equal:

\begin{prop}\label{checker-coding-is-same-as-plain-coding}
\textbf{(i)} We have $\mathbf{c}_{P\times Q, \, Q\times P} = \mathbf{c}_{P,Q}$.

\textbf{(ii)} We have $\mathbf{d}_{P\times Q, \, Q\times P} \vee
\mathbf{c}_{P,Q} = \Om$.  (In particular, if $\mathbf{d}_{P\times Q,
  \, Q\times P} = \mathbf{0}$ then $\mathbf{c}_{P,Q} = \Om$.)
\end{prop}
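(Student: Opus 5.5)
For (i), we prove the two inequalities separately. For $\mathbf{c}_{P\times Q,\,Q\times P} \leq \mathbf{c}_{P,Q}$, Arthur queries the coding oracle $\{P,Q\}$ twice. If the target set is $P\times Q$, Nimue plays $P$ on the first query and $Q$ on the second. If it is $Q\times P$, she plays $Q$ then $P$. Merlin must return $p$ then $q$ in the first case and $q$ then $p$ in the second. Arthur declares the pair of the two values received, which lies in the target set. For the reverse inequality $\mathbf{c}_{P,Q} \leq \mathbf{c}_{P\times Q,\,Q\times P}$, we use \cref{many-to-one-reduction-implies-coding-oracles-reduction}. The first projection is computable and total, maps $P\times Q$ into $P$ and $Q\times P$ into $Q$, and so witnesses $(P\times Q,\,Q\times P) \mathrel{\preceq_{\mathrm{m}}} (P,Q)$. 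Concretely, Arthur queries the pair-coding oracle once, Nimue plays the matching set, and Arthur declares the first coordinate. (The codomain of the projection contains the sets $P,Q$, and they are disjoint and inhabited, so the definition applies.)

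For (ii), we combine (i) with \cref{coding-and-decoding-give-omniscience} applied to the promise problem $(P\times Q,\,Q\times P)$. This requires checking that $P\times Q$ and $Q\times P$ are inhabited and disjoint. Inhabitedness follows from that of $P$ and $Q$. For disjointness, a pair in both sets would have its first coordinate in $P\cap Q=\varnothing$. The proposition then gives $\mathbf{d}_{P\times Q,\,Q\times P} \vee \mathbf{c}_{P\times Q,\,Q\times P} = \Om$, and substituting (i) yields the claim. The parenthetical follows because joining with $\mathbf{0}$ changes nothing.

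The only step needing real care is the first inequality of (i). There we must check that Nimue's two moves may depend on the secret part of the initial configuration, which is allowed since her strategy sees it. We must also check that Arthur's declared value does not depend on information he lacks: he only pairs the two answers in order, so the construction is fine. Everything else is bookkeeping.
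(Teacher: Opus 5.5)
Your proof is correct and follows the paper's own argument: the ``ask twice, once positively and once negatively'' strategy for $\mathbf{c}_{P\times Q,\,Q\times P}\leq\mathbf{c}_{P,Q}$, the first projection together with \cref{many-to-one-reduction-implies-coding-oracles-reduction} for the converse, and (ii) from (i) combined with \cref{coding-and-decoding-give-omniscience} applied to $(P\times Q,\,Q\times P)$. Your orientation $(P\times Q,\,Q\times P)\mathrel{\preceq_{\mathrm{m}}}(P,Q)$ of the projection reduction is the one consistent with the definition (the paper's text writes it the other way round), and your check that $P\times Q$ and $Q\times P$ are inhabited and disjoint is a small detail the paper leaves implicit.
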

\begin{proof}
Concerning (i), the inequality $\mathbf{c}_{P\times Q, \, Q\times P}
\geq \mathbf{c}_{P,Q}$ follows from
\cref{many-to-one-reduction-implies-coding-oracles-reduction} since the
first projection witnesses $(P,Q) \mathrel{\preceq_{\mathrm{m}}}
(P\times Q, \penalty0\, Q\times P)$.

For the inequality $\mathbf{c}_{P\times Q, \, Q\times P} \leq
\mathbf{c}_{P,Q}$, we apply the essence of the trick used to answer
the first riddle in ¶\ref{intro-riddles}: namely, Arthur's strategy is
to query the $\mathbf{c}_{P,Q}$ oracle twice, Nimue knows this and
plays either $P$ then $Q$ or $Q$ then $P$ according as the initial
configuration requires Arthur to produce an element of $P\times Q$ or
$Q\times P$.  Arthur can then simply collect the two values he
received into a pair and declare this.

Finally, (ii) follows trivially from (i) and
\cref{coding-and-decoding-give-omniscience} (but applied to $(P\times
Q, \, Q\times P)$).
\end{proof}

\bigskip

We now arrive at our main technical result: we have just seen that
$\mathbf{d}_{P\times Q, \, Q\times P} = \mathbf{0}$ implies
$\mathbf{c}_{P,Q} = \Om$; in fact, this is an equivalence:

\begin{thm}\label{main-strategy-result}
\textit{(Plain version.)}  Let $P,Q \subseteq \mathbb{N}$ be inhabited
and disjoint.  If the coding oracle associated to $(P,Q)$ is
omniscient, then the promise problem $(P \times Q, \, Q \times P)$
is decidable. In other words, if $\mathbf{c}_{P,Q} = \Om$ then
$\mathbf{d}_{P\times Q, \, Q\times P} = \mathbf{0}$ (as we have just
said, this is an equivalence).

\textit{(Relativized version.)} Let $P, Q \subseteq \N$ be inhabited
and disjoint, and let $\mathbf{a}$ be a T0 degree. If
$\mathbf{c}_{P, Q} \vee \mathbf{a} = \Om$ then
$\mathbf{d}_{P \times Q, \, Q \times P} \leq \mathbf{a}$.
\end{thm}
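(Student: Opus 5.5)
The plan is to extract a decision procedure for $(P\times Q,\,Q\times P)$ directly from a winning Arthur+Nimue strategy for $\Om\preceq\mathbf{c}_{P,Q}$. The procedure searches for finite trees of Merlin replies that are \emph{compatible} with one case and \emph{incompatible} with the other. I would do the plain version first and then relativize.

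\textbf{Setup.} By \cref{exercise-reduction-to-a-bit}, represent $\Om$ by $\{\{0\},\{1\}\}$. Then the hypothesis gives a partial computable Arthur strategy $e$ and Nimue strategies $\sigma_0,\sigma_1$ (one for each secret bit $b$) with the following property. Arthur repeatedly queries the basic oracle $\{P,Q\}$, Nimue $\sigma_b$ chooses $P$ or $Q$ depending on the history, and Merlin replies with any element of the chosen set. Then against every Merlin behaviour, Arthur terminates with the value $b$. The public input plays no role, so Arthur's strategy is a partial computable function of the finite sequence of Merlin replies. Fix once and for all elements $p_0\in P$ and $q_0\in Q$; these are hard-coded constants, so the resulting algorithm is computable even though it is non-uniform in $(P,Q)$.

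\textbf{Key construction.} Given an input pair $(x,y)$, call a finite binary tree of Merlin-reply sequences an \emph{A-tree} for $(x,y)$ if it satisfies the following. At each internal node the two children extend the history by the two letters of $\{p_0,x\}$, or by the two letters of $\{q_0,y\}$. Arthur's strategy, run on each leaf history, queries at every internal node and declares a value at every leaf. Finally, all leaf values are the same constant $c\in\{0,1\}$. A \emph{B-tree} is defined in the same way with the pairs $\{p_0,y\}$ and $\{q_0,x\}$. Both properties are semi-decidable: enumerate finite trees and dovetail Arthur's computations. The algorithm searches in parallel for an A-tree or a B-tree, answering ``$P\times Q$'' if it finds an A-tree first and ``$Q\times P$'' if it finds a B-tree first.

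\textbf{Existence.} If $(x,y)\in P\times Q$, an A-tree exists. Take the tree of all plays of the real game with $b=0$ in which Nimue follows $\sigma_0$ and Merlin restricts his replies to $\{p_0,x\}$ when Nimue plays $P$ and to $\{q_0,y\}$ when she plays $Q$. These replies are legal because $p_0,x\in P$ and $q_0,y\in Q$. This tree is binary branching, and every branch is finite because $\sigma_0$ with $e$ is winning. By König's lemma the tree is therefore finite, and all its leaves are labelled $0$. The symmetric argument produces a B-tree when $(x,y)\in Q\times P$.

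\textbf{Non-existence (the crucial step).} If $(x,y)\in Q\times P$, then no A-tree exists. Suppose $T$ were an A-tree with constant value $c$, and play the real game with $b=1-c$ and Nimue following $\sigma_b$. Merlin stays inside $T$ as follows. At a node branching over $\{p_0,x\}$, he answers $p_0$ if Nimue plays $P$ and $x$ if she plays $Q$; this is legal since $x\in Q$. At a node branching over $\{q_0,y\}$, he answers $y$ if Nimue plays $P$ (legal since $y\in P$) and $q_0$ if she plays $Q$. Because each pair contains one element of $P$ and one of $Q$ in this case, every Nimue move can be matched. Arthur's moves are determined by the history, so the play follows $T$ down to a leaf, where Arthur declares $c\neq b$. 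This contradicts the assumption that the strategy is winning. Symmetrically, no B-tree exists when $(x,y)\in P\times Q$. Hence exactly one search succeeds, and it succeeds with the correct answer, which shows $\mathbf{d}_{P\times Q,\,Q\times P}=\mathbf{0}$.

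\textbf{Relativized version.} The same argument works when Arthur may additionally query a T0 oracle $a$ representing $\mathbf{a}$. Queries to $a$ are answered deterministically: Nimue has no choice and Merlin has no freedom. So the trees only branch at queries to $\mathbf{c}_{P,Q}$, and the dovetailed search is carried out relative to $a$, which is possible because $a$ is total. This gives $\mathbf{d}_{P\times Q,\,Q\times P}\le\mathbf{a}$.

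I expect the non-existence step to be the main obstacle, since that is where the wrong-case pairing must be exploited. The finiteness step via König's lemma is, by contrast, routine.
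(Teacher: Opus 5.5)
Your proposal is correct and is essentially the paper's proof: your A-trees and B-trees are exactly the finite winning strategy fragments of the players Left and Right in the paper's simultaneous-move ancillary game built from the $2\times 2$ matrix $M_{(x,y)}$ with entries $p,x,y,q$, and your König's lemma step is the paper's compactness argument. The only minor difference is how you exclude the wrong case: you let Merlin shadow Nimue's strategy for the opposite bit, which lets you allow an arbitrary constant $c$, whereas the paper fixes the value $0$ for Left and $1$ for Right and gets exclusivity because two winning fragments played against each other cannot both win.
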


\begin{proof}
Let us first prove the “plain” version.

For the entirety of the proof, we fix elements $p \in P$ and $q \in Q$,
which exist by assumption.

We assume that $\mathbf{c}_{P, Q}$ is omniscient, which means that
there exists a winning Arthur+Nimue strategy in the game reducing $\Om$
to $\mathbf{c}_{P, Q}$ (recall that we described the special
case of reductions from $\Om$ in \ref{reduction-from-om}).
We fix such a strategy and call $\sigma$ Arthur's part of the strategy;
thus, $\sigma$ takes a finite sequence of natural numbers representing
moves by Merlin to a move by Arthur, which can be “query the oracle
once more” or “declare a final value $u$”.

Given an element $(x,y) \in (P\times Q) \,\cup\, (Q\times P)$, we consider
the function $M_{(x,y)} \colon \{\Psymb,\Qsymb\}^2 \to
\mathbb{N}$ (where $\Psymb,\Qsymb$ are two
labels\footnote{If the reader has difficulty distinguishing
 $\Psymb,\Qsymb$ from $P,Q$ typographically, we hasten
 to reassure that this will not cause any confusion.})
defined explicitly by:
\[
\begin{aligned}
M_{(x,y)}(\Psymb,\Psymb) &= p \\
M_{(x,y)}(\Psymb,\Qsymb) &= x \\
M_{(x,y)}(\Qsymb,\Psymb) &= y \\
M_{(x,y)}(\Qsymb,\Qsymb) &= q \\
\end{aligned}
\]

This function is designed to ensure the following property (\textdagger):
if $(x,y) \in P\times Q$, then $M_{(x,y)}(\Psymb, r) \in P$ and
$M_{(x,y)}(\Qsymb, r) \in Q$ for any $r \in
\{\Psymb,\Qsymb\}$, and symmetrically, if $(x,y) \in Q\times P$, then
$M_{(x,y)}(\ell, \Psymb) \in P$ and $M_{(x,y)}(\ell, \Qsymb)
\in Q$ for any $\ell \in \{\Psymb,\Qsymb\}$.

We define an ancillary game, which is a two-player imperfect
information game parameterized by $(x, y) \in (P \times Q) \,\cup\, (Q \times P)$.
The two players are “Left” and “Right”. On the $i$-th turn,
Left chooses a value $\ell_i \in \{\Psymb, \Qsymb\}$, and
Right simultaneously chooses $r_i \in \{\Psymb, \Qsymb\}$. (Each player
has seen the other's moves up to this turn, but the two moves on this
turn are chosen independently.) What happens next depends on the
value of Arthur's strategy $\sigma$ on the sequence
$M_{(x, y)}(\ell_0, r_0), \ldots, M_{(x, y)}(\ell_i, r_i)$.
If $\sigma$ returns “query the oracle”, then the ancillary game continues.
If $\sigma$ returns “declare a final value $u$” where $u \in \{0, 1\}$,
then Left wins if $u = 0$, whereas Right wins if $u = 1$. If $\sigma$
fails to return or returns a malformed value, or if the game lasts
infinitely long, then the result is a draw (no player wins).

Assume temporarily that $(x, y) \in P \times Q$.

We observe that Left has a winning strategy in the ancillary game.
Indeed, by (\textdagger), Left can control whether the successive
elements provided to $\sigma$ are in $P$ or $Q$, and using Nimue's
part of the Arthur+Nimue winning strategy, this enables Left to
make Arthur's strategy output 0.

Let $L$ be a winning strategy for Left. This is a function from finite
sequences in $\{\Psymb, \Qsymb\}$, representing past moves by Right,
to elements of $\{\Psymb, \Qsymb\}$, representing the next move by Left.
For every sequence $s \in \{\Psymb, \Qsymb\}^\N$ of moves by Right, the
play where Left plays according to $L$ and Right plays the successive
elements of $s$ must end in a victory for Left after a finite number $m(s)$
of moves. The function $m : \{\Psymb, \Qsymb\}^\N \to \N$ is
continuous (where $\{\Psymb, \Qsymb\}^\N$ is topologized by the product
of the discrete topology)
because $m(s)$ only depends on the first $m(s)$ terms of $s$.
By compactness of the Cantor space $\{\Psymb, \Qsymb\}^\N$, it
must be bounded by some constant $M$. This means that a finite fragment
of $L$ — its restriction to finite sequences in $\{\Psymb, \Qsymb\}$
of length at most $M$ — already suffices to witness that Left has
a winning strategy.

We have proved that for all $(x, y) \in P \times Q$, there exists
a finite fragment of strategy, namely a function only defined on
finite sequences in $\{\Psymb, \Qsymb\}$ up to a certain length $M$,
which certifies that Left can win, in the sense that running the
game up to the $M$-th turn using this strategy fragment for the moves
of Left and any sequence of length $M$ in $\{\Psymb, \Qsymb\}$
for the moves of Right results in the game already ending with a
victory for Left.

Symmetrically, for $(x, y) \in Q \times P$, there exists a finite
strategy fragment which certifies that Right can win. Finally,
the two conditions are mutually exclusive since the players cannot
both win at the same time. This shows that $(x, y) \in P \times Q$
if and only if Left has a winning finite strategy fragment and
$(x, y) \in Q \times P$ if and only if Right has a winning finite strategy
fragment.

We deduce an algorithm to decide, given $(x, y) \in (P \times Q)
\,\cup\, (Q \times P)$, whether $(x, y) \in P \times Q$
or $(x, y) \in Q \times P$: enumerate finite strategy fragments in parallel
to search at the same time for a winning strategy fragment for Left
and a winning strategy fragment for Right. Whether a strategy
fragment is winning for a given player can be semi-decided by
simulating the plays with this fragment for the given player and
all possible sequences up to the length of the fragment for the other
player.

This concludes the proof of the “plain” version of the theorem.

\medskip

To prove the “relativized” version, we make the following observation:
the proof of the “plain” version which we have just given applies just
as well if we replace computability by computability relative to an
ordinary Turing\footnote{We emphasize here that this would not hold
for a T1 oracle because we use the ability to run several computations
in parallel.  See also
question \ref{question-parallelizable-t1-degrees}.} oracle $A
\subseteq \mathbb{N}$.  This gives us the following
result (\textsection): if Arthur+Nimue have a winning strategy in the
game reducing $\Om$ to $\mathbf{c}_{P,Q}$, but where now Arthur is
allowed to access the oracle $A$ in computing his strategy, then there
is an $A$-computable way to solve to the promise problem $(P\times Q,
\, Q\times P)$.

But the hypothesis of (\textsection) is equivalent to saying that
Arthur+Nimue have a winning strategy in the game reducing $\Om$ to
$[A]_{\mathrm{T0}} \vee \mathbf{c}_{P,Q}$ (because an
$A$-computable strategy for Arthur is the same thing as a strategy for
Arthur that can make the move of querying the $A$-oracle), and
the conclusion of (\textsection) is equivalent to saying that Arthur
has a winning strategy in the game reducing the decoding oracle
associated to $(P\times Q, \, Q\times P)$ to $A$.  So we have proved
the relativized version as claimed, and this concludes our entire
proof.
\end{proof}

\thingy\label{answer-to-riddle} This already provides an answer to the
second riddle asked in \ref{intro-riddles} in the introduction.
Indeed, let $K := \{e \mid \varphi_e(0){\downarrow}\}$ be the halting set
and $\bar K := \mathbb{N} \setminus K$ its complement; and let $P :=
(K\times K) \,\cup\, (\bar K \times \bar K)$ be the set of pairs of Turing
machines of which an even number halt and $Q := (K\times \bar K) \,\cup\,
(\bar K \times K)$ be the set of pairs of Turing machines of which
exactly one halts.  The riddle asks whether $\mathbf{c}_{P,Q} = \Om$.
Now if such were the case, then by \cref{main-strategy-result} we would
have $\mathbf{d}_{P\times Q, \, Q\times P} = \mathbf{0}$ (in other
words, given four Turing machines of which an odd number halt, we can
compute the parity of how many halt among the first two); but this
allows us to (computably) decide the halting of an arbitrary $e$ by decoding
$((\mathtt{loop},e), (\mathtt{halt},e))$ where $\mathtt{loop}$ is a
fixed Turing machine that never halts and $\mathtt{halt}$ is one that
halts immediately: indeed, if $e\in K$ then this is in $Q\times P$
whereas if $e\in \bar K$ this is in $P\times Q$.  But this is
impossible: so in fact $\mathbf{c}_{P,Q} < \Om$, solving the riddle.

\section{Co-Turing degrees and their order}\label{section-co-turing-degrees}

In the previous section, we have defined “decoding” and “coding”
oracles for a promise problem $(P,Q)$.  The relation between the two,
however, is not as straightforward as one might hope (we will remark
in \ref{remark-that-c-p-q-is-not-a-function-of-d-p-q} below that the
degree $\mathbf{c}_{P,Q}$ is not a function of the degree
$\mathbf{d}_{P,Q}$).  We now turn to a particular case of the
$\mathbf{d}_{P,Q}$ and $\mathbf{c}_{P,Q}$, which we will denote by
$\tilde{\mathbf{d}}_A$ and $\tilde{\mathbf{c}}_A$ respectively,
defined by a set $A \subseteq \mathbb{N}$, which will behave better in
this respect: the degree $\tilde{\mathbf{d}}_A$ will turn out to
be simply the Turing degree of $A$, while $\tilde{\mathbf{c}}_A$ will
turn out to depend only on the Turing degree $[A]_\TZero$ of $A$ and this will
provide our sought-after “co-Turing” degrees.

\begin{dfn}\label{definition-codegrees}
For $A \subseteq \mathbb{N}$ arbitrary and $i\in\{0,1\}$, we define
$H_i^A := \{e \mid \varphi_e^A(0){\downarrow} = i\}$ to be the set of
oracle programs $e$ which, when provided $A$ (that is, $1_A$) as
oracle, terminate and return the value $i$.  (Note that $H_0^A, H_1^A$
are inhabited and disjoint.)

We also define $\tilde{\mathbf{d}}_A := \mathbf{d}_{H_0^A,\, H_1^A}$
and $\tilde{\mathbf{c}}_A := \mathbf{c}_{H_0^A,\, H_1^A}$.  We will
call $\tilde{\mathbf{c}}_A$ the \textbf{co-Turing degree} associated
to $A$ (we will see in \ref{reverse-embedding-theorem} that it depends
only on the Turing degree of $A$).
\end{dfn}

\thingy Since it is the central contruction of this paper, we spell
out the following explicit description: the $\tilde{\mathbf{c}}_A$ oracle is
one which can answer an arbitrary “yes”/“no” question, but, instead of
providing a direct answer $0$ or $1$, it (adversarially) encodes its
answer in the form of a program $e$ which, when run with $A$ as
(ordinary Turing) oracle, terminates and computes the desired
answer $\varphi_e^A(0)$.

\bigskip

The deeper significance of the $H_i^A$ construction will be
investigated around \cref{construction-j-assembly-for-turing-degree}
below, but the immediate point is that is it turns a Turing-reduction on the
oracle $A$ to a many-to-one reduction on the $H_i^A$.  Indeed, the following
fact is standard (cf. \cite[theorem 3.4.3(v)]{Soare}) and can also be
seen as a special case
of \cref{statement-oracle-topology-correspondence} below, but let us
spell it out:

\begin{prop}\label{turing-reduction-implies-many-to-one-reduction-on-halting-sets}
For $A,B\subseteq\mathbb{N}$, if $B$ is Turing-reducible to $A$, then
$(H_0^B,H_1^B) \mathrel{\preceq_{\mathrm{m}}} (H_0^A,H_1^A)$.
\end{prop}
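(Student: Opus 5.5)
We construct the many-to-one reduction explicitly, using the s-m-n theorem and the fact that oracle programs can call each other as subroutines. Fix an index $k$ such that $\varphi_k^A = 1_B$; this exists since $B$ is Turing-reducible to $A$ (and $A$, $B$ are total, so $\varphi_k^A$ is total).

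Given an index $e$, let $f(e)$ be the index of the following oracle program. On any input, with oracle $X$, it simulates the program $e$ on input $0$. Whenever the simulated program queries its oracle at some $n$, the new program answers by running $\varphi_k^X(n)$, and uses the returned value as the oracle answer. When the simulation terminates with a value, the new program returns that value.

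This transformation $e \mapsto f(e)$ is given by a uniform syntactic construction. By the s-m-n theorem it is therefore a \emph{total} computable function, in particular defined on $H_0^B\cup H_1^B$, as \cref{definition-many-to-one-reduction} requires. Its construction depends only on $k$ and $e$, not on $A$ or $B$; this matches the convention in ¶\ref{general-notations} that the numbering of programs is independent of the oracle.

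It remains to check that $f(H_i^B)\subseteq H_i^A$ for $i\in\{0,1\}$. Run with oracle $A$, every subroutine call $\varphi_k^A(n)$ terminates and returns $1_B(n)$. So the simulation of $e$ proceeds step for step exactly as $\varphi_e^B(0)$ would. Hence if $\varphi_e^B(0){\downarrow}=i$, then $\varphi_{f(e)}^A(0){\downarrow}=i$, which is the required inclusion. We do not need the converse direction, since a many-to-one reduction of promise problems only has to respect the promised sets.

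No step is a real obstacle. The only point needing care is that $f$ must be computable and independent of the oracle; this is why we build it from the fixed index $k$ by uniform composition, rather than from anything that depends on $A$ itself.
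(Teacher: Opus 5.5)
Your proposal is correct and follows the paper's proof: fix an index $k$ with $\varphi_k^A = 1_B$, computably transform $e$ into a program that replaces each oracle call by a call to $k$, and note that run with oracle $A$ this reproduces $\varphi_e^B(0)$. The paper's version is the same substitution, phrased as $\varphi_e^B = \varphi_{e'}^A$ as partial functions.
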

\begin{proof}
Suppose $1_B = \varphi_h^A$ witnesses that $B$ is Turing-reducible
to $A$.  Given $e\in H_0^B \cup H_1^B$, we computably convert it to a
program $e'$ by replacing every call to the oracle (supposed to yield
the characteristic function of $B$) by a call to $h$: then
$\varphi_e^B = \varphi_{e'}^A$ as partial functions, in particular
$e'\in H_i^A$ if $e\in H_i^B$, and $e\mapsto e'$, which is computable
(even primitive recursive) witnesses the claimed many-to-one reduction.
\end{proof}

\begin{prop}\label{turing-reduction-implies-codegree-reduction}
If $A,B\subseteq\mathbb{N}$ are such that $B$ is Turing-reducible
to $A$, then $\tilde{\mathbf{c}}_A \leq \tilde{\mathbf{c}}_B$ (as
T3 degrees).
\end{prop}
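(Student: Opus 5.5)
This is an immediate chaining of two results already in the paper. Suppose $B$ is Turing-reducible to $A$.

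First, \cref{turing-reduction-implies-many-to-one-reduction-on-halting-sets} gives the many-to-one reduction
\[
(H_0^B,H_1^B) \mathrel{\preceq_{\mathrm{m}}} (H_0^A,H_1^A).
\]
Next, we apply \cref{many-to-one-reduction-implies-coding-oracles-reduction} with $(P_2,Q_2) := (H_0^B,H_1^B)$ and $(P_1,Q_1) := (H_0^A,H_1^A)$. It yields $\mathbf{c}_{H_0^A,H_1^A} \leq \mathbf{c}_{H_0^B,H_1^B}$. By \cref{definition-codegrees} this is exactly $\tilde{\mathbf{c}}_A \leq \tilde{\mathbf{c}}_B$.

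The only hypothesis to check is that the pairs $(H_0^A,H_1^A)$ and $(H_0^B,H_1^B)$ are inhabited and disjoint, which was already noted in \cref{definition-codegrees}. Note that the order reverses because coding oracles are contravariant in many-to-one reductions.

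Concretely, the strategy is as follows. Arthur queries the $\tilde{\mathbf{c}}_B$ oracle once, and Nimue plays $H_0^B$ or $H_1^B$ according to the secret set. Arthur then receives some $e$ and declares the translated program $e'$, in which each oracle call is replaced by the program $h$ computing $1_B$ from $A$.

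There is no real obstacle here.
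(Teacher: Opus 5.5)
Your proof is correct and matches the paper's, which likewise obtains the result by combining the many-to-one reduction $(H_0^B,H_1^B) \mathrel{\preceq_{\mathrm{m}}} (H_0^A,H_1^A)$ with the contravariance of coding oracles under many-to-one reduction. Your explicit unwinding of the resulting Arthur+Nimue strategy is also accurate.
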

\begin{proof}
This follows trivially from
\cref{turing-reduction-implies-many-to-one-reduction-on-halting-sets} and \cref{many-to-one-reduction-implies-coding-oracles-reduction}.
\end{proof}

Of course, under the same assumption, we also have
$\tilde{\mathbf{d}}_B \leq \tilde{\mathbf{d}}_A$ by
\cref{turing-reduction-implies-many-to-one-reduction-on-halting-sets} and \cref{many-to-one-reduction-implies-decoding-oracles-reduction},
but in fact we can say more:

\begin{prop}\label{decoding-relative-halting-is-turing-degree}
For $A \subseteq \mathbb{N}$ arbitrary, the (a priori T1) degree
$\tilde{\mathbf{d}}_A$ is the Turing degree $[A]_{\mathrm{T0}}$
of $A$.
\end{prop}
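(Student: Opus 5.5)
We prove the two inequalities $\tilde{\mathbf{d}}_A \leq [A]_{\mathrm{T0}}$ and $[A]_{\mathrm{T0}} \leq \tilde{\mathbf{d}}_A$ separately, working with the description of T1 reducibility from ¶\ref{simplification-of-reducibility}: a T1 oracle $f$ is reducible to $g$ iff some $\varphi_e^g$ extends $f$.

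For $\tilde{\mathbf{d}}_A \leq [A]_{\mathrm{T0}}$, we must give an $A$-computable partial function which, on every $e \in H_0^A \cup H_1^A$, returns $i$ when $e \in H_i^A$. We simply run $\varphi_e^A(0)$ using the oracle $A$ and return its output. By definition of $H_i^A$, this computation terminates with output $i$ whenever $e \in H_i^A$. Off the promise, it may diverge or output something else, which is harmless since only an extension is required. This is just the relativization of the observation $\mathbf{d}_{H_0,H_1} = \mathbf{0}$ in \cref{example-disjoint-c-e-sets}.

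For $[A]_{\mathrm{T0}} \leq \tilde{\mathbf{d}}_A$, we need to compute $1_A$ using the decoding oracle for $(H_0^A,H_1^A)$. By the s-m-n theorem, fix a total computable (even primitive recursive) function $n \mapsto e_n$ such that $e_n$ is the oracle program that ignores its input, queries its oracle at $n$, and returns the answer. Then $\varphi_{e_n}^A(0){\downarrow} = 1_A(n) \in \{0,1\}$. Thus $e_n \in H_{1_A(n)}^A$, so $e_n$ always lies in the promise set $H_0^A \cup H_1^A$. Querying the decoding oracle at $e_n$ is therefore always legal, and it returns exactly $1_A(n)$. This gives a total oracle computation of $1_A$ from $\tilde{\mathbf{d}}_A$; in fact it is a many-to-one reduction of $(A^c, A)$ to $(H_0^A,H_1^A)$, so one can invoke \cref{many-to-one-reduction-implies-decoding-oracles-reduction} together with \cref{example-set-and-complement}. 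The case where $A$ or its complement is empty is trivial.

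Neither direction poses a real obstacle. The only point requiring care is the T1 subtlety: queries outside the domain of the decoding oracle make the computation fail. The second direction must therefore only query at points guaranteed to lie in $H_0^A \cup H_1^A$, and this holds because $A$ is a total $\{0,1\}$-valued oracle, so each $e_n$ terminates.
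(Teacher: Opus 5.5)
Your proposal is correct and follows the paper's proof essentially verbatim. For $\tilde{\mathbf{d}}_A \leq [A]_{\mathrm{T0}}$ you run $\varphi_e^A(0)$ with $A$ as oracle, and for the converse you use the s-m-n theorem to produce the program that queries the oracle at $n$ and returns the answer, then ask the decoding oracle about it. Your added remark, that this converse is a many-to-one reduction of $(\N\setminus A, A)$ to $(H_0^A,H_1^A)$ with the degenerate cases handled separately, is a correct repackaging of the same argument.
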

\begin{proof}
To see that $\tilde{\mathbf{d}}_A \leq [A]_{\mathrm{T0}}$, we note
that given $e \in H_0^A \cup H_1^A$, with the help of $A$ (that is,
$1_A$) as oracle, we can simply compute $\varphi_e^A$ and return its
value, which is exactly that of the required decoding oracle.

Conversely, to see that $[A]_{\mathrm{T0}} \leq \tilde{\mathbf{d}}_A$,
given $k\in \N$ we can compute whether $k\in A$ by first computing (by
means of the relative s-m-n theorem) the program $e$ that queries
the oracle on the value $k$ and then returns its answer, and then
asking the decoding oracle whether this $e$ is in $H_0^A$ or $H_1^A$.
\end{proof}

Recall that in the previous section we have seen the importance of
considering $\mathbf{d}_{P\times Q, \, Q\times P}$ (and not just
$\mathbf{d}_{P,Q}$) to understand $\mathbf{c}_{P,Q}$, and recall that
we have seen that $\mathbf{d}_{P\times Q, \, Q\times P} \leq
\mathbf{d}_{P,Q}$ holds in general
(\cref{checker-decoding-is-easier-than-plain-decoding}) but that this
inequality can be strict
(\cref{easy-example-decoding-checker-inequality-can-be-strict}
or \ref{example-dedekind-cuts}).  In the present situation, however,
this is an equality (it is also the Turing degree of $A$):

\begin{prop}\label{checker-decoding-relative-halting-is-plain-decoding}
For $A \subseteq \mathbb{N}$ arbitrary, the degree $\mathbf{d}_{(H_0^A
  \times H_1^A), \; (H_1^A \times H_0^A)}$ equals $\mathbf{d}_{H_0^A,
  \, H_1^A} =: \tilde{\mathbf{d}}_A$.
\end{prop}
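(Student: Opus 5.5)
One inequality is already available: \cref{checker-decoding-is-easier-than-plain-decoding} gives $\mathbf{d}_{(H_0^A \times H_1^A), \; (H_1^A \times H_0^A)} \leq \tilde{\mathbf{d}}_A$. So the plan is to prove the reverse inequality. By \cref{decoding-relative-halting-is-turing-degree} we have $\tilde{\mathbf{d}}_A = [A]_{\mathrm{T0}}$, so it suffices to show $[A]_{\mathrm{T0}} \leq \mathbf{d}_{(H_0^A \times H_1^A), \; (H_1^A \times H_0^A)}$. By \cref{many-to-one-reduction-implies-decoding-oracles-reduction}, it is even enough to give a many-to-one reduction of the promise problem $(\{k \mid k\notin A\},\{k\mid k\in A\})$, i.e.\ the complement/set pair whose decoding degree is $[A]_{\mathrm{T0}}$ by \cref{example-set-and-complement}, to $(H_0^A \times H_1^A, \; H_1^A \times H_0^A)$.

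The construction follows the idea behind \cref{decoding-relative-halting-is-turing-degree}, combined with the negation trick. Given $k$, use the relative s-m-n theorem to compute two programs:
\begin{itemize}
\item $e_k$, which queries the oracle at $k$ and returns the answer;
\item $e'_k$, which queries the oracle at $k$ and returns $1$ minus the answer.
\end{itemize}
Both programs terminate with any total $\{0,1\}$-valued oracle, in particular with $1_A$. If $k\notin A$, then $e_k\in H_0^A$ and $e'_k\in H_1^A$, so $(e_k,e'_k)\in H_0^A\times H_1^A$. If $k\in A$, then $(e_k,e'_k)\in H_1^A\times H_0^A$. The map $k\mapsto(e_k,e'_k)$ is total computable, so it witnesses the required many-to-one reduction, and querying the decoding oracle on this pair yields $1_A(k)$.

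There is no real obstacle here. The only point to check is that the promise is genuinely met: each pair must lie in $(H_0^A\times H_1^A)\cup(H_1^A\times H_0^A)$ for every $k$, which holds because $A$ is a total oracle. Note also that the examples where the inequality is strict (\cref{easy-example-decoding-checker-inequality-can-be-strict}, \cref{example-dedekind-cuts}) fail precisely because no such "negated copy" of an instance can be produced uniformly, whereas for $(H_0^A,H_1^A)$ the negated copy is a trivial program transformation.
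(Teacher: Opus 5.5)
Your proof is correct and rests on the same negation trick as the paper's, $e\mapsto(e,\mathtt{flip}\circ e)$. The only difference is that you first pass through $\tilde{\mathbf{d}}_A=[A]_{\mathrm{T0}}$ (\cref{decoding-relative-halting-is-turing-degree}) and reduce from $A$ itself via $k\mapsto(e_k,\mathtt{flip}\circ e_k)$; your map is just the paper's flip map composed with the reduction $k\mapsto e_k$ from that proposition, whereas the paper applies the flip directly to instances of $(H_0^A,H_1^A)$, which avoids the detour and is the form reused verbatim in \cref{checker-decoding-for-two-sets}, where the decoding degree is a T1 meet rather than a Turing degree (it also sidesteps the trivial edge case $A\in\{\varnothing,\mathbb{N}\}$, where $(\mathbb{N}\setminus A,A)$ is not a promise problem in the paper's sense, though your direct algorithm handles that case anyway).
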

\begin{proof}
The inequality $\mathbf{d}_{(H_0^A \times H_1^A), \; (H_1^A \times
  H_0^A)} \leq \mathbf{d}_{H_0^A, \, H_1^A}$ has already been seen in
full generality
in \ref{checker-decoding-is-easier-than-plain-decoding}.  So we need
to see the converse.

But given $e \in H_i^A$ we can computably produce $\mathtt{flip}\circ
e \in H_{1-i}^A$ by simply composing $e$ with the function that
exchanges $0$ and $1$.  To decide by means of $\mathbf{d}_{(H_0^A
  \times H_1^A), \; (H_1^A \times H_0^A)}$ whether $e \in H_0^A \cup
H_1^A$ belongs to $H_0^A$ or $H_1^A$, we ask whether $(e, \,
\mathtt{flip}\circ e)$ belongs in $H_0^A \times H_1^A$ or $H_1^A
\times H_0^A$.  This proves $\mathbf{d}_{H_0^A, \, H_1^A} \leq
\mathbf{d}_{(H_0^A \times H_1^A), \; (H_1^A \times H_0^A)}$ (formally,
by applying
\cref{many-to-one-reduction-implies-decoding-oracles-reduction} to $e
\mapsto (e, \, \mathtt{flip}\circ e)$).
\end{proof}

This allows us to obtain the converse
of \cref{turing-reduction-implies-codegree-reduction} from our main
technical result:

\begin{prop}\label{codegree-reduction-implies-turing-reduction}
If $A,B\subseteq\mathbb{N}$ are such that $\tilde{\mathbf{c}}_A \leq
\tilde{\mathbf{c}}_B$ (as T3 degrees), then $B$ is Turing-reducible
to $A$.
\end{prop}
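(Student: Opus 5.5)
The plan is to reduce the hypothesis $\tilde{\mathbf{c}}_A \leq \tilde{\mathbf{c}}_B$ to an omniscience statement about $\tilde{\mathbf{c}}_A$ joined with $[B]_\TZero$, and then apply the relativized version of \cref{main-strategy-result}.

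First, by \cref{coding-and-decoding-give-omniscience} applied to $(H_0^B,H_1^B)$ we have $\tilde{\mathbf{d}}_B \vee \tilde{\mathbf{c}}_B = \Om$. By \cref{decoding-relative-halting-is-turing-degree}, $\tilde{\mathbf{d}}_B = [B]_\TZero$, so $[B]_\TZero \vee \tilde{\mathbf{c}}_B = \Om$. Using the hypothesis $\tilde{\mathbf{c}}_A \leq \tilde{\mathbf{c}}_B$ and monotonicity of joins (\cref{join-of-degrees}), we obtain
\[
\Om = [B]_\TZero \vee \tilde{\mathbf{c}}_B \geq [B]_\TZero \vee \tilde{\mathbf{c}}_A .
\]
This inequality points the wrong way: we want $[B]_\TZero \vee \tilde{\mathbf{c}}_A = \Om$. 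So I would instead begin from $\tilde{\mathbf{c}}_B$ being bounded above by $\Om$, and exploit the hypothesis in the correct direction, as follows.

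Since $\tilde{\mathbf{c}}_A \leq \tilde{\mathbf{c}}_B$, we have $[B]_\TZero \vee \tilde{\mathbf{c}}_A \leq [B]_\TZero \vee \tilde{\mathbf{c}}_B = \Om$, which again gives only an upper bound. The useful direction comes from swapping roles. Join $\tilde{\mathbf{d}}_A = [A]_\TZero$ with $\tilde{\mathbf{c}}_A$ to get $\Om$; then $\Om = [A]_\TZero \vee \tilde{\mathbf{c}}_A \leq [A]_\TZero \vee \tilde{\mathbf{c}}_B$. Hence $[A]_\TZero \vee \tilde{\mathbf{c}}_B = \Om$, because $\Om$ is the second-greatest degree and $\tilde{\mathbf{c}}_B \leq \Om$ (none of the sets in the coding oracle is empty, see ¶\ref{reduction-from-om}). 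Applying the relativized version of \cref{main-strategy-result} with $P=H_0^B$, $Q=H_1^B$ and $\mathbf{a}=[A]_\TZero$ then yields
\[
\mathbf{d}_{(H_0^B\times H_1^B),\,(H_1^B\times H_0^B)} \leq [A]_\TZero .
\]
By \cref{checker-decoding-relative-halting-is-plain-decoding} the left-hand side equals $\tilde{\mathbf{d}}_B$, which is $[B]_\TZero$ by \cref{decoding-relative-halting-is-turing-degree}. Therefore $[B]_\TZero \leq [A]_\TZero$, i.e.\ $B$ is Turing-reducible to $A$.

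The only delicate point is the upper bound $\tilde{\mathbf{c}}_B\leq\Om$ needed to turn the inequality into an equality. This follows from ¶\ref{reduction-from-om}, since $H_0^B$ and $H_1^B$ are inhabited and hence $\varnothing\notin\{H_0^B,H_1^B\}$. The substantive work is already contained in \cref{main-strategy-result}. Its relativized form applies because $[A]_\TZero$ is a T0 degree, which is exactly the hypothesis on $\mathbf{a}$ that the theorem requires.
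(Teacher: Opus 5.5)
Your final argument is correct and is exactly the paper's proof. You show $\Om = [A]_{\mathrm{T0}} \vee \tilde{\mathbf{c}}_A \leq [A]_{\mathrm{T0}} \vee \tilde{\mathbf{c}}_B \leq \Om$, apply the relativized \cref{main-strategy-result} with $P=H_0^B$, $Q=H_1^B$, $\mathbf{a}=[A]_{\mathrm{T0}}$, and then use \cref{checker-decoding-relative-halting-is-plain-decoding} and \cref{decoding-relative-halting-is-turing-degree}. You also make explicit the upper bound by $\Om$, which the paper leaves implicit. That bound needs $[A]_{\mathrm{T0}} \leq \Om$ as well as $\tilde{\mathbf{c}}_B \leq \Om$, both immediate from paragraph~\ref{reduction-from-om}. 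The two opening attempts, with the roles of $A$ and $B$ reversed, are harmless but lead nowhere and should be deleted.
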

\begin{proof}
By \cref{coding-and-decoding-give-omniscience}, we have
$\tilde{\mathbf{d}}_A \vee \tilde{\mathbf{c}}_A = \Om$, so our
assumption yields $\tilde{\mathbf{d}}_A \vee \tilde{\mathbf{c}}_B = \Om$.
Now by \cref{decoding-relative-halting-is-turing-degree},
$\tilde{\mathbf{d}}_A$ is the Turing degree $\mathbf{a}$ of $A$: thus,
$\mathbf{a} \vee \tilde{\mathbf{c}}_B = \Om$.
By \cref{main-strategy-result}, this implies $\mathbf{d}_{(H_0^B \times
  H_1^B), \; (H_1^B \times H_0^B)} \leq \mathbf{a}$.  By
\cref{checker-decoding-relative-halting-is-plain-decoding} and
\cref{decoding-relative-halting-is-turing-degree} (again),
$\mathbf{d}_{(H_0^B \times H_1^B), \; (H_1^B \times H_0^B)}$ is the
Turing degree $\mathbf{b}$ of $B$.  So we have proved $\mathbf{b} \leq
\mathbf{a}$, as claimed.
\end{proof}

Putting all this together, we have shown:

\begin{thm}\label{reverse-embedding-theorem}
For $A,B\subseteq\mathbb{N}$, we have $\tilde{\mathbf{c}}_A \leq
\tilde{\mathbf{c}}_B$ iff $[B]_{\mathrm{T0}}
\leq[A]_{\mathrm{T0}}$ (equivalently: $\tilde{\mathbf{d}}_B \leq
\tilde{\mathbf{d}}_A$).

Thus, the mapping $\coT\colon [A]_{\mathrm{T0}} \mapsto
\tilde{\mathbf{c}}_A$ is a well-defined injection from
$\mathscr{D}_{\mathrm{T0}}^\op$ (where
$(\emdash)^\op$ denotes the opposite order) to
$\mathscr{D}_{\mathrm{T3}}$, which preserves and reflects the order.
\end{thm}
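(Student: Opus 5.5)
The theorem is essentially an assembly of the two directions already established, so the plan is to combine them and then read off the order-theoretic consequences.

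For the biconditional, the forward direction is \cref{codegree-reduction-implies-turing-reduction}. If $\tilde{\mathbf{c}}_A \leq \tilde{\mathbf{c}}_B$, then $B$ is Turing-reducible to $A$, which is $[B]_\TZero \leq [A]_\TZero$. The converse is \cref{turing-reduction-implies-codegree-reduction}. The parenthetical equivalence with $\tilde{\mathbf{d}}_B \leq \tilde{\mathbf{d}}_A$ is immediate from \cref{decoding-relative-halting-is-turing-degree}, which identifies $\tilde{\mathbf{d}}_A$ with $[A]_\TZero$ and $\tilde{\mathbf{d}}_B$ with $[B]_\TZero$.

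For the second paragraph I would proceed in three steps.

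\textbf{Well-definedness.} If $[A]_\TZero = [B]_\TZero$, then both $[B]_\TZero \leq [A]_\TZero$ and $[A]_\TZero \leq [B]_\TZero$ hold. Applying the biconditional twice gives $\tilde{\mathbf{c}}_A \leq \tilde{\mathbf{c}}_B$ and $\tilde{\mathbf{c}}_B \leq \tilde{\mathbf{c}}_A$. Since $\D_\TThree$ is a partial order, $\tilde{\mathbf{c}}_A = \tilde{\mathbf{c}}_B$, so $\coT$ depends only on the Turing degree.

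\textbf{Preserving and reflecting order.} This is exactly the biconditional, read with the order on $\D_\TZero$ reversed.

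\textbf{Injectivity.} This follows formally from order reflection. If $\tilde{\mathbf{c}}_A = \tilde{\mathbf{c}}_B$, then both inequalities hold among the co-Turing degrees. Reflection gives both inequalities among the Turing degrees, and antisymmetry then gives $[A]_\TZero = [B]_\TZero$.

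There is no real obstacle at this point, since all the substance lies in \cref{main-strategy-result} (via \cref{codegree-reduction-implies-turing-reduction}). The only point needing care is to invoke antisymmetry in $\D_\TThree$ and in $\D_\TZero$ at the right moments.
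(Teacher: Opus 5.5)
Your proposal is correct and follows the paper's proof: the biconditional is the conjunction of \cref{codegree-reduction-implies-turing-reduction} and \cref{turing-reduction-implies-codegree-reduction}, and the parenthetical equivalence comes from \cref{decoding-relative-halting-is-turing-degree}. The only difference is that you spell out well-definedness, order preservation and reflection, and injectivity via antisymmetry, which the paper simply says follow ``immediately from the first [claim] and from definitions.''
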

\begin{proof}
The first claim is the conjunction of propositions
\ref{turing-reduction-implies-codegree-reduction} and \ref{codegree-reduction-implies-turing-reduction};
the parenthetical remark
is \cref{decoding-relative-halting-is-turing-degree}.  The second
claim follows immediately from the first and from definitions.
\end{proof}

\begin{rmk}\label{remark-that-c-p-q-is-not-a-function-of-d-p-q}
Returning to our example started in \ref{example-dedekind-cuts}, if
$x \in \mathbb{R}$ is uncomputable and $P$ and $Q$ denote its lower $\{r \in
\mathbb{Q} \mid r<x\}$ and upper $\{r \in \mathbb{Q} \mid r>x\}$ Dedekind
cut respectively, then $\mathbf{d}_{P,Q} = [x]_{\mathrm{T0}}$ is the
Turing degree of $x$, which is also (after identifying $x$ with a
subset of $\mathbb{N}$, say the set of locations of $1$ in its binary
expansion) equal to $\tilde{\mathbf{d}}_x := \mathbf{d}_{H_0^x,\,
  H_1^x}$ by \cref{decoding-relative-halting-is-turing-degree}; but
$\mathbf{c}_{P,Q} = \Om$ follows from
\cref{checker-coding-is-same-as-plain-coding}(ii)
and the fact pointed out
in \cref{example-dedekind-cuts} that $\mathbf{d}_{P\times Q, \,
  Q\times P} = \mathbf{0}$, whereas
$\tilde{\mathbf{c}}_x := \mathbf{c}_{H_0^x,\, H_1^x} < \Om$
by \cref{reverse-embedding-theorem}.  This example shows that the
degree $\mathbf{c}_{P,Q}$ is \emph{not} a function of the degree
$\mathbf{d}_{P,Q}$ in general, even though the degree $\tilde{\mathbf{c}}_A$
\emph{is} a function of $\tilde{\mathbf{d}}_A$.
\end{rmk}

\begin{rmk}\label{remark-extension-to-t1-degrees}\textit{(Extension to T1 degrees.)}
If $f\colon\mathbb{N}\dasharrow\mathbb{N}$ is a partial function, we
can define $H_0^f,H_1^f$ in the obvious way (using, of course, the
convention that a program that calls the oracle at an undefined value
fails to terminate); then the analogue
of \cref{turing-reduction-implies-many-to-one-reduction-on-halting-sets}
still holds: namely, $[g]_{\mathrm{T1}} \leq [f]_{\mathrm{T1}}$
implies $(H_0^g,H_1^g) \mathrel{\preceq_{\mathrm{m}}} (H_0^f,H_1^f)$
(the only change to the proof is that $\varphi_{e'}^f$ might now
extend $\varphi_e^g$ but this doesn't affect anything), so the
analogue of \cref{turing-reduction-implies-codegree-reduction} also
holds, and by defining $\tilde{\mathbf{c}}_f := \mathbf{c}_{H_0^f,\,
  H_1^f}$ analogously to \cref{definition-codegrees} we get an
order-reversing map $\coT_1 \colon [f]_{\mathrm{T1}} \mapsto
\tilde{\mathbf{c}}_f$ that extends $\coT$
to $\mathscr{D}_{\mathrm{T1}}$.

We can also define $\tilde{\mathbf{d}}_f := \mathbf{d}_{H_0^f,\,
  H_1^f}$ in this context and we still have the analogues of
\cref{decoding-relative-halting-is-turing-degree} and \cref{checker-decoding-relative-halting-is-plain-decoding}
with the same proofs (so $\tilde{\mathbf{d}}_f$ is just the T1
degree $[f]_\TOne$ of $f$).  We do not know, however, how to obtain the
analogue of \cref{codegree-reduction-implies-turing-reduction} in this
context because it relies on the relative version of \cref{main-strategy-result}, whose proof
depends on $\mathbf{a}$ being a Turing degree.  Thus we only get the
weaker conclusion that $\coT(\mathbf{a}) \leq \coT_1(\mathbf{g})$
implies $\mathbf{g}\leq\mathbf{a}$ when $\mathbf{a}$ is a T0 degree
while $\mathbf{g}$ can be a T1 degree.  (Still, this lets us conclude
that $\coT_1(\mathbf{g}) = \Om$ implies $\mathbf{g} = \mathbf{0}$.)
See also \cref{question-codegrees-for-t1-degrees} below.
\end{rmk}

\begin{rmk}\label{remark-extension-to-t2-degrees}\textit{(Tentative extension to T2 degrees.)}
Continuing the line of thoughts of the previous remark, one might even
attempt to do something with T2 degrees as follows: if
$f\colon\mathbb{N}\dasharrow\mathcal{P}(\mathbb{N})$ is a T2 oracle,
we can define $H_0^f,H_1^f$ as $J_f(\{0\}),J_f(\{1\})$ respectively
(where $J_f$ is defined
in \ref{definition-oracle-topology-correspondence}), in other words,
the set of Arthur strategies in querying the $f$ oracle which,
\emph{whatever} the answers made by Merlin, terminate and return the
value $0$, resp. $1$.  Again the analogue
of \cref{turing-reduction-implies-many-to-one-reduction-on-halting-sets}
still holds: $[g]_{\mathrm{T2}} \leq [f]_{\mathrm{T2}}$ implies
$(H_0^g,H_1^g) \mathrel{\preceq_{\mathrm{m}}} (H_0^f,H_1^f)$ so the
analogue of \cref{turing-reduction-implies-codegree-reduction} also
holds, and by defining $\tilde{\mathbf{c}}_f := \mathbf{c}_{H_0^f,\,
  H_1^f}$ we get an order-reversing map $\coT_2 \colon
[f]_{\mathrm{T2}} \mapsto \tilde{\mathbf{c}}_f$ that extends $\coT$
(and $\coT_1$) to $\mathscr{D}_{\mathrm{T2}}$.  But now if we define
$\tilde{\mathbf{d}}_f := \mathbf{d}_{H_0^f,\, H_1^f}$ then we only get
$\tilde{\mathbf{d}}_f \leq [f]_{\mathrm{T2}}$, and this inequality can
be strict since $\tilde{\mathbf{d}}_f$ is a T1 degree but there are
T2 degrees $f$ which are not T1, in other words the analogue
of \cref{decoding-relative-halting-is-turing-degree} fails, making the
situation even less felicitous.
\end{rmk}

We will point out in \cref{example-codegree-for-error-one-third} below
that if we are even bolder and extend $\coT$ in the “obvious” way to a
map $\coT_3$ on the T3 degrees, then we can have $\coT_3(\mathbf{g}) =
\Om$ despite $\mathbf{g} > \mathbf{0}$.

\section{Comparing co-Turing degrees with Turing degrees}\label{section-comparison-of-codegree-with-degree}

In the previous section, we have defined for each
Turing degree $\mathbf{a} = [A]_{\mathrm{T0}}$ a “co-Turing”
degree $\coT(\mathbf{a}) = \tilde{\mathbf{c}}_A$ (in the T3 degrees),
and we have seen that $\coT(\mathbf{a}) \leq \coT(\mathbf{b})$ occurs
iff $\mathbf{a} \geq \mathbf{b}$.  So we understand how the co-Turing
degrees are ordered.  In this section, we would now like to compare a
co-Turing degree $\coT(\mathbf{a})$ with a Turing degree $\mathbf{b}$
(again, in the T3 degrees).

We know \textit{a priori} from \cite[prop. 3]{Phoa} that if
$\coT(\mathbf{a}) \geq \mathbf{b}$ for \emph{all} Turing
degrees $\mathbf{b}$ then in fact $\coT(\mathbf{a}) = \Om$ (so that
$\mathbf{a} = \mathbf{0}$ by \cref{reverse-embedding-theorem}); we will
proceed to show in this section that if $\coT(\mathbf{a}) \geq \mathbf{b}$ for
\emph{some} Turing degree $\mathbf{b} > \mathbf{0}$ then in fact
$\mathbf{a} = \mathbf{0}$.  (To put it differently, not only will
$\coT(\mathbf{a})$ for $\mathbf{a} > 0$ not give Arthur omniscience,
it won't even let him compute \emph{any} deterministic function
$\N\to\N$ that wasn't already computable.)

Our strategy for this is to consider the promise problem $(P,Q) :=
(H_0^A \times H_0^B, \, H_1^A \times H_1^B)$, and to compare its
coding and decoding degrees with those of $(H_0^A, \, H_1^A)$ (namely
$\tilde{\mathbf{c}}_A$ and $\tilde{\mathbf{d}}_A$) and of $(H_0^B, \,
H_1^B)$.

\begin{prop}\label{decoding-is-meet-for-two-sets}
For $A,B\subseteq\mathbb{N}$, if $P := H_0^A \times H_0^B$ and $Q :=
H_1^A \times H_1^B$, then $\mathbf{d}_{P,Q}$ equals the binary meet
(inf) $[A]_{\mathrm{T0}} \mathbin{\wedge_{\mathrm{T1}}}
[B]_{\mathrm{T0}}$, taken inside the T1 degrees
$\mathscr{D}_{\mathrm{T1}}$, of the Turing degrees of $A$ and $B$.
\end{prop}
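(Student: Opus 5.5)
We prove the two inequalities separately, using the explicit description of the T1 meet from \cref{meet-of-t1-degrees} and the description of T1 reducibility in ¶\ref{simplification-of-reducibility}.

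\emph{Upper bound: $\mathbf{d}_{P,Q} \leq [A]_\TZero \meetDOne [B]_\TZero$.}  We show that $\mathbf{d}_{P,Q}$ is below both $[A]_\TZero$ and $[B]_\TZero$; the claim then follows because the meet is the greatest lower bound.  The first projection $(e,e') \mapsto e$ maps $P$ into $H_0^A$ and $Q$ into $H_1^A$, so it witnesses $(P,Q) \mathrel{\preceq_{\mathrm{m}}} (H_0^A,H_1^A)$.  By \cref{many-to-one-reduction-implies-decoding-oracles-reduction} and \cref{decoding-relative-halting-is-turing-degree} this gives $\mathbf{d}_{P,Q} \leq \tilde{\mathbf{d}}_A = [A]_\TZero$.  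The second projection gives $\mathbf{d}_{P,Q} \leq [B]_\TZero$ in the same way.

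\emph{Lower bound: $[A]_\TZero \meetDOne [B]_\TZero \leq \mathbf{d}_{P,Q}$.}  By \cref{meet-of-t1-degrees}, the meet is represented by $1_A \meetTOne 1_B$.  This function is defined on $\langle p', q', m, n\rangle$ exactly when $\varphi_{p'}^A(m){\downarrow} = \varphi_{q'}^B(n){\downarrow}$, and then takes this common value $w$.  We reduce it to the decoding oracle of $(P,Q)$ by the following procedure on input $\langle p',q',m,n\rangle$:
\begin{enumerate}
\item For each $k = 0, 1, 2, \ldots$ in turn, use the relative s-m-n theorem to compute programs $e_k$ and $e'_k$.  The program $e_k$, run with oracle $A$, computes $\varphi_{p'}^A(m)$ and returns $0$ if the result equals $k$, and $1$ otherwise; $e'_k$ does the same for $\varphi_{q'}^B(n)$ with oracle $B$.
\item Query the decoding oracle of $(P,Q)$ at $(e_k,e'_k)$.
\item Stop at the first $k$ for which the answer is $0$ (meaning the pair lies in $P$), and return that $k$.
\end{enumerate}

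The loop is correct on the domain of the meet.  When $\langle p',q',m,n\rangle$ is in that domain with value $w$, both computations halt with value $w$.  So for every $k$ the two programs $e_k, e'_k$ return the same bit: $0$ if $k=w$ and $1$ otherwise.  Hence $(e_k,e'_k) \in Q$ for $k<w$ and $(e_w,e'_w) \in P$.  Every query therefore lies in $P\cup Q$, where the decoding oracle is defined, and the loop halts with output exactly $w$.  Only sequential queries are made and no dovetailing is needed, which matters because the oracle is T1.  By ¶\ref{simplification-of-reducibility}, this shows $1_A \meetTOne 1_B \redTOne \mathbf{d}_{P,Q}$.

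\emph{Where care is needed.}  The main point to check is this lower bound, namely that every query lands inside $P \cup Q$.  A query outside $P\cup Q$ would make the computation fail, because the decoding oracle is only a T1 oracle.  Our encoding is chosen so that $e_k$ and $e'_k$ both terminate, and agree, whenever the meet is defined; this is exactly what places each query in $P\cup Q$.  The upper bound is routine by comparison.
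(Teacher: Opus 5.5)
Your proof is correct and follows essentially the same route as the paper. The upper bound comes from the two projections together with \cref{many-to-one-reduction-implies-decoding-oracles-reduction} and \cref{decoding-relative-halting-is-turing-degree}, and the lower bound from a sequential loop over candidate values that queries the decoding oracle on pairs of uniformly built test programs; the only difference is cosmetic, since the paper's test programs return $1$ on a match and stop when the pair lands in $Q$, whereas yours return $0$ and stop when it lands in $P$.
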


(One may wish to compare with the constructions of the meet given in
\cref{meet-of-t1-degrees} and ¶\ref{algebraic-operations-on-l-t-topologies}.)

\begin{proof}
The inequality $\mathbf{d}_{P,Q} \leq [A]_{\mathrm{T0}}
\mathbin{\wedge_{\mathrm{T1}}} [B]_{\mathrm{T0}}$ follows from
$\mathbf{d}_{P,Q} \leq \mathbf{d}_{H_0^A,\,H_1^A}
=[A]_{\mathrm{T0}}$ and $\mathbf{d}_{P,Q} \leq
\mathbf{d}_{H_0^B,\,H_1^B} =[B]_{\mathrm{T0}}$
(the inequalities follow from projection
and \cref{many-to-one-reduction-implies-decoding-oracles-reduction},
and the equalities are given
in \cref{decoding-relative-halting-is-turing-degree}).

The inequality $\mathbf{d}_{P,Q} \geq [A]_{\mathrm{T0}}
\mathbin{\wedge_{\mathrm{T1}}} [B]_{\mathrm{T0}}$ follows from the
explicit description of $\mathbin{\wedge_{\mathrm{T1}}}$ in the
T1 degrees in \ref{meet-of-t1-degrees}: given $(p,q,m,n)$ such that
$\varphi_p^A(m){\downarrow} = \varphi_q^B(n)$, we loop through
all $N=0,1,2,\ldots$ and, for each one, computably produce an oracle
program $a_N$ which runs $p$ on $m$ and checks whether the result
is $N$, returning $1$ if true and $0$ otherwise, and $b_N$ analogously
for $q$ on $N$, then we use $\mathbf{d}_{P,Q}$ to test if $(a_N,b_N)$
is in $P = H_0^A \times H_0^B$ or in $Q = H_1^A \times H_1^B$ (note
that it must be in one of them because $\varphi_p^A(m){\downarrow} =
\varphi_q^B(n)$), and we continue the loop until it is found to be
in $Q$ at which point we return the corresponding $N$.
\end{proof}

Recall that we have seen that $\mathbf{d}_{P\times Q, \, Q\times P}
\leq \mathbf{d}_{P,Q}$ holds in general
(\cref{checker-decoding-is-easier-than-plain-decoding}) but that this
inequality can be strict
(\cref{easy-example-decoding-checker-inequality-can-be-strict}
or \ref{example-dedekind-cuts}).  In the specific case we are
interested in this section, it is, in fact,
just like
in \cref{checker-decoding-relative-halting-is-plain-decoding}, an
equality:

\begin{prop}\label{checker-decoding-for-two-sets}
For $A,B\subseteq\mathbb{N}$, if $P := H_0^A \times H_0^B$ and $Q :=
H_1^A \times H_1^B$, then the degree $\mathbf{d}_{P \times Q, \; Q
  \times P}$ equals $\mathbf{d}_{P,Q}$.
\end{prop}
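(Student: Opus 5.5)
One inequality, $\mathbf{d}_{P\times Q,\;Q\times P} \leq \mathbf{d}_{P,Q}$, is already available from \cref{checker-decoding-is-easier-than-plain-decoding}. So the plan is to prove the converse $\mathbf{d}_{P,Q} \leq \mathbf{d}_{P\times Q,\;Q\times P}$. I will exhibit a many-to-one reduction $(P,Q) \mathrel{\preceq_{\mathrm{m}}} (P\times Q,\,Q\times P)$ and conclude by \cref{many-to-one-reduction-implies-decoding-oracles-reduction}, exactly as in the proof of \cref{checker-decoding-relative-halting-is-plain-decoding}.

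The reduction uses the $\mathtt{flip}$ trick coordinatewise. Given $(a,b) \in P \cup Q$, we computably form $(\mathtt{flip}\circ a,\, \mathtt{flip}\circ b)$, where $\mathtt{flip}\circ e$ is the program that runs $e$ and exchanges $0$ and $1$ in its output. Since $\varphi_{\mathtt{flip}\circ e}^A(0) = 1-\varphi_e^A(0)$ whenever the latter is defined and in $\{0,1\}$, we have $e\in H_i^A \Rightarrow \mathtt{flip}\circ e \in H_{1-i}^A$, and likewise for $B$.

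Thus if $(a,b) \in P = H_0^A\times H_0^B$, then $(\mathtt{flip}\circ a,\mathtt{flip}\circ b) \in H_1^A\times H_1^B = Q$. Symmetrically, if $(a,b)\in Q$, the flipped pair lies in $P$. The map
\[
(a,b) \mapsto \bigl((a,b),\,(\mathtt{flip}\circ a,\mathtt{flip}\circ b)\bigr)
\]
(with the usual tuple coding) is total computable, even primitive recursive. It sends $P$ into $P\times Q$ and $Q$ into $Q\times P$, which is precisely what \cref{definition-many-to-one-reduction} requires.

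There is no real obstacle here. The only point needing care is that $\mathtt{flip}$ must act on the output of the oracle computation, not on the oracle $A$ or $B$ itself. The oracle stays fixed, so the same flipped program lands in the correct $H_{1-i}^A$ (or $H_{1-i}^B$), independently in each coordinate. Combining the two inequalities gives the equality.
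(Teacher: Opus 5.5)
Your proposal is correct and follows essentially the same route as the paper: the general inequality from \cref{checker-decoding-is-easier-than-plain-decoding} gives one direction, and the coordinatewise $\mathtt{flip}$ map $(a,b) \mapsto ((a,b),\,(\mathtt{flip}\circ a,\,\mathtt{flip}\circ b))$ is a many-to-one reduction $(P,Q) \mathrel{\preceq_{\mathrm{m}}} (P\times Q,\,Q\times P)$ giving the other. Your point that $\mathtt{flip}$ acts on the output rather than on the oracle matches the paper's remark that $e \in H_i^C$ implies $\mathtt{flip}\circ e \in H_{1-i}^C$ for any $C$.
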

\begin{proof}
The proof is essentially the same as
for \cref{checker-decoding-relative-halting-is-plain-decoding}:

The inequality $\mathbf{d}_{P \times Q, \; Q \times P} \leq
\mathbf{d}_{P,Q}$ has already been seen in full generality
in \cref{checker-decoding-is-easier-than-plain-decoding}.

For the converse, recall that given $e \in H_i^C$ (no matter what $C
\subseteq \mathbb{N}$ is) we can computably produce
$\mathtt{flip}\circ e \in H_{1-i}^C$ by simply composing $e$ with the
function that exchanges $0$ and $1$.  To decide by means of
$\mathbf{d}_{P \times Q, \; Q \times P}$ whether $(a,b) \in P \cup
Q$ belongs to $P$ or $Q$, we ask whether $((a,b), \,
(\mathtt{flip}\circ a, \, \mathtt{flip}\circ b))$ belongs in $P\times
Q$ or $Q\times P$.
\end{proof}

\begin{lem}\label{codegree-reduction-for-two-sets}
For $A,B\subseteq\mathbb{N}$, if $P := H_0^A \times H_0^B$ and $Q :=
H_1^A \times H_1^B$, then $\tilde{\mathbf{c}}_A \leq \mathbf{c}_{P,Q}$
\end{lem}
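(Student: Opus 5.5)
The plan is to derive the inequality from \cref{many-to-one-reduction-implies-coding-oracles-reduction}. To get $\tilde{\mathbf{c}}_A = \mathbf{c}_{H_0^A,\,H_1^A} \leq \mathbf{c}_{P,Q}$, it suffices to exhibit a many-to-one reduction $(P,Q) \mathrel{\preceq_{\mathrm{m}}} (H_0^A, H_1^A)$. That is, we need a partial computable function, defined at least on $P \cup Q$, sending $P$ into $H_0^A$ and $Q$ into $H_1^A$.

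The first projection $(a,b) \mapsto a$ does exactly this. It is computable and total on pairs, hence defined on all of $P\cup Q$. If $(a,b) \in P = H_0^A \times H_0^B$, then $a \in H_0^A$; if $(a,b) \in Q = H_1^A\times H_1^B$, then $a \in H_1^A$. Since $H_0^A, H_1^A$ are inhabited and disjoint, and so are $P$ and $Q$ (they are inhabited because each $H_i^C$ is, and disjoint because their first coordinates lie in the disjoint sets $H_0^A, H_1^A$), the hypotheses of \cref{definition-many-to-one-reduction} are satisfied.

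Concretely, the winning strategy in the reduction game of $\tilde{\mathbf{c}}_A$ to $\mathbf{c}_{P,Q}$ runs as follows. Arthur queries the $\mathbf{c}_{P,Q}$ oracle once. Nimue plays $P$ if the secret target is $H_0^A$, and $Q$ if it is $H_1^A$. Merlin must then return a pair $(a,b)$ in the chosen set, and Arthur declares $a$. I expect no real obstacle here; the only point requiring care is that the inhabitedness and disjointness hypotheses hold, which was checked above.
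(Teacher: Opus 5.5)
Your proposal is correct and follows the paper's proof: reduce to \cref{many-to-one-reduction-implies-coding-oracles-reduction} and observe that the first projection witnesses $(P,Q) \mathrel{\preceq_{\mathrm{m}}} (H_0^A, H_1^A)$. Your explicit check that $P,Q$ are inhabited and disjoint, and your unwinding of the resulting Arthur+Nimue strategy, are accurate extra detail that the paper leaves implicit.
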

\begin{proof}
By \cref{many-to-one-reduction-implies-coding-oracles-reduction}, it is
enough to prove $(P,Q) \mathrel{\preceq_{\mathrm{m}}}
(H_0^A,\,H_1^A)$.  But the first projection witnesses this.
\end{proof}

\begin{thm}\label{comparison-of-codegree-with-degree}
\textbf{(i)} For $A,B\subseteq\mathbb{N}$, if $\tilde{\mathbf{c}}_A
\geq [B]_{\mathrm{T0}}$ then in fact one of $A,B$ is computable.

In other words: for all Turing degrees $\mathbf{a},\mathbf{b}$, if
$\coT(\mathbf{a}) \geq \mathbf{b}$ then $\mathbf{a}=\mathbf{0}$ or
$\mathbf{b}=\mathbf{0}$ (the converse being obvious).

\textbf{(ii)} For $A,B\subseteq\mathbb{N}$, it is never true that
$\tilde{\mathbf{c}}_A \leq [B]_{\mathrm{T0}}$.  In other words:
for all Turing degrees $\mathbf{a},\mathbf{b}$, we have
$\coT(\mathbf{a}) \not\leq \mathbf{b}$.
\end{thm}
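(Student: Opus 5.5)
For (i), the plan is to exploit the promise problem $(P,Q) := (H_0^A \times H_0^B,\, H_1^A \times H_1^B)$ prepared above. Assume $\tilde{\mathbf{c}}_A \geq [B]_{\mathrm{T0}}$. First show $\mathbf{c}_{P,Q} = \Om$. By \cref{coding-and-decoding-give-omniscience} applied to $(H_0^B,H_1^B)$, together with \cref{decoding-relative-halting-is-turing-degree}, we have $[B]_\TZero \vee \tilde{\mathbf{c}}_B = \Om$. Since $[B]_\TZero \leq \tilde{\mathbf{c}}_A$, it suffices to see that $\tilde{\mathbf{c}}_A \vee \tilde{\mathbf{c}}_B \leq \mathbf{c}_{P,Q}$. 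Rather than use joins, I would build the reduction of $\Om$ to $\mathbf{c}_{P,Q}$ directly. Arthur runs the strategy reducing $\Om$ to $[B]_\TZero \vee \tilde{\mathbf{c}}_B$. Each query to $\tilde{\mathbf{c}}_B$ he answers by querying $\mathbf{c}_{P,Q}$ and taking the second projection, with Nimue playing $P$ or $Q$ as the simulated Nimue plays $H_0^B$ or $H_1^B$. Each query to $B$ he answers by running the strategy reducing $B$ to $\tilde{\mathbf{c}}_A$, again through $\mathbf{c}_{P,Q}$ but now taking first projections, with Nimue choosing $P$ or $Q$ according to that simulated strategy. This works because $\mathbf{c}_{P,Q}$ is a single basic oracle whose answers, for Nimue's choice of $P$ or $Q$, project correctly onto both components. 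In effect it is \cref{codegree-reduction-for-two-sets} applied to both coordinates within one game.

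Given $\mathbf{c}_{P,Q} = \Om$, \cref{main-strategy-result} (plain version) yields $\mathbf{d}_{P\times Q,\,Q\times P} = \mathbf{0}$. By \cref{checker-decoding-for-two-sets} this is $\mathbf{d}_{P,Q}$, and by \cref{decoding-is-meet-for-two-sets} that equals $[A]_\TZero \meetDOne [B]_\TZero$. Hence $[A]_\TZero \meetDOne [B]_\TZero = \mathbf{0}$, and \cref{computability-of-t1-meet} (plain version, with $\mathbf{f} = [A]$ T1 and $\mathbf{g} = [B]$ T0) gives that $A$ or $B$ is computable. The converse is immediate: if $\mathbf{a}=\mathbf{0}$ then $\coT(\mathbf{a}) = \Om \geq \mathbf{b}$ by \cref{reduction-from-om}, and if $\mathbf{b}=\mathbf{0}$ there is nothing to prove. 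The main obstacle is the first step, namely checking carefully that a single $\mathbf{c}_{P,Q}$ oracle can simultaneously serve both simulated coding oracles in an interleaved game. I expect this to be routine once the rule ``Nimue picks $P$ or $Q$, Arthur projects to the relevant coordinate'' is written down.

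For (ii), suppose $\tilde{\mathbf{c}}_A \leq [B]_\TZero$. The T3 degree $[B]_\TZero$ is T2 (indeed T0), and $\tilde{\mathbf{c}}_A$ is basic. Apply \cref{computability-of-meet-of-basic-and-t3-degrees} with $\mathbf{f} := \mathbf{0}$, $\mathbf{g} := \tilde{\mathbf{c}}_A$ and $\mathbf{h} := [B]_\TZero$. Since $\mathbf{0} \meetDThree \tilde{\mathbf{c}}_A \leq \mathbf{h}$ trivially, this is not informative, so instead argue directly as in ¶\ref{when-are-oracles-computable}. In the reduction game of $\{H_0^A,H_1^A\}$ to $B$, Nimue has no moves and Arthur cannot see the secret $G \in \{H_0^A,H_1^A\}$. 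His deterministic $B$-computation therefore outputs the same value in both cases, which must lie in $H_0^A\cap H_1^A = \varnothing$, a contradiction. Equivalently, $\tilde{\mathbf{c}}_A$ would then be $\mathbf{0}$, but $\tilde{\mathbf{c}}_A \geq \tilde{\mathbf{c}}_{\mathbf{0}}$ is nonzero because $H_0^A\cap H_1^A$ is empty.
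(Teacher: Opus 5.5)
Your proof is correct, but it differs from the paper's at two points: how you reach $\mathbf{c}_{P,Q}=\Om$ in (i), and the whole argument for (ii).

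\textbf{Part (i).} You follow the paper's chain: same $(P,Q)$, then \cref{main-strategy-result}, \cref{checker-decoding-for-two-sets}, \cref{decoding-is-meet-for-two-sets} and \cref{computability-of-t1-meet}. The only difference is the step to $\mathbf{c}_{P,Q}=\Om$. The paper applies \cref{coding-and-decoding-give-omniscience} to $(P,Q)$ itself, using $\mathbf{d}_{P,Q}\leq[B]_\TZero\leq\tilde{\mathbf{c}}_A\leq\mathbf{c}_{P,Q}$. You apply it to $(H_0^B,H_1^B)$ instead, so you also need $\tilde{\mathbf{c}}_B\leq\mathbf{c}_{P,Q}$. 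That works, but the interleaved simulation you flag as the main obstacle is unnecessary. Both $\tilde{\mathbf{c}}_A\leq\mathbf{c}_{P,Q}$ and $\tilde{\mathbf{c}}_B\leq\mathbf{c}_{P,Q}$ follow from the projections and \cref{many-to-one-reduction-implies-coding-oracles-reduction}. Since $\vee$ is a least upper bound (\cref{join-of-degrees}), the join bound is immediate. For equality you should also note the trivial $\mathbf{c}_{P,Q}\leq\Om$ from ¶\ref{reduction-from-om}.

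\textbf{Part (ii).} Here the routes genuinely differ. The paper combines \cref{coding-and-decoding-give-omniscience}, \cref{decoding-relative-halting-is-turing-degree} and \cref{join-of-degrees} to get $[A\oplus B]_\TZero=\Om$, which contradicts the fact that $\Om$ is not T2. You instead observe that against a deterministic oracle the play cannot depend on the secret. Arthur would then have to declare an element of $H_0^A\cap H_1^A=\varnothing$. Your argument is more elementary and proves more: no basic degree other than $\mathbf{0}$ lies below any Turing degree. It is in effect the instance $\mathbf{f}:=\top$ of \cref{computability-of-meet-of-basic-and-t3-degrees}, since $\top\meetDThree\tilde{\mathbf{c}}_A=\tilde{\mathbf{c}}_A$. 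So the attempt you abandoned would have worked with $\top$ in place of $\mathbf{0}$.

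\textbf{One slip.} The inequality in your last sentence is backwards: you have $\tilde{\mathbf{c}}_A\leq\coT(\mathbf{0})=\Om$, not $\geq$. The reversed version would force $\tilde{\mathbf{c}}_A=\Om$, which is false for noncomputable $A$. This does no damage, since $\tilde{\mathbf{c}}_A\neq\mathbf{0}$ follows directly from ¶\ref{when-are-oracles-computable}.
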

\begin{proof}
We start with (i), which is the harder claim (and which depends on our
previous results).  Let $P := H_0^A \times H_0^B$ and $Q := H_1^A
\times H_1^B$.  If $\tilde{\mathbf{c}}_A \geq [B]_{\mathrm{T0}}$
then $\mathbf{c}_{P,Q} \geq [B]_{\mathrm{T0}}$
by \cref{codegree-reduction-for-two-sets}; now $[B]_{\mathrm{T0}}
\geq \mathbf{d}_{P,Q}$ by \cref{decoding-is-meet-for-two-sets}, so
$\mathbf{c}_{P,Q} \geq \mathbf{d}_{P,Q}$.
By \cref{coding-and-decoding-give-omniscience}, this implies
$\mathbf{c}_{P,Q} = \Om$.  By \cref{main-strategy-result}, this implies
$\mathbf{d}_{P\times Q, \, Q\times P} = \mathbf{0}$.
By \cref{checker-decoding-for-two-sets} this is just $\mathbf{d}_{P,Q}
= \mathbf{0}$.  By \cref{decoding-is-meet-for-two-sets} again, this
means $[A]_{\mathrm{T0}} \mathbin{\wedge_{\mathrm{T1}}}
[B]_{\mathrm{T0}} = \mathbf{0}$.
By \cref{computability-of-t1-meet}, this means that $A$ or $B$ is
computable, as claimed.

Let us now prove (ii), which is easier.  If $\tilde{\mathbf{c}}_A \leq
[B]_{\mathrm{T0}}$, then
by propositions \ref{coding-and-decoding-give-omniscience}
and \ref{decoding-relative-halting-is-turing-degree} we have
$[A]_{\mathrm{T0}} \vee [B]_{\mathrm{T0}} = \Om$, which
by \cref{join-of-degrees} means $[A \oplus B]_{\mathrm{T0}} = \Om$.
This is a contradiction as $\Om$ is not a Turing degree (we remarked
in ¶\ref{reduction-from-om} that $\Om$ is not T2).
\end{proof}

In light of the above, one might wonder if, perhaps, $\coT(\mathbf{a})
\mathbin{\wedge_{\mathrm{T3}}} \mathbf{b} = \mathbf{0}$ always holds
(since we have just seen that no Turing degree $>\mathbf{0}$ nor
co-Turing degree can be $\leq \coT(\mathbf{a})
\mathbin{\wedge_{\mathrm{T3}}} \mathbf{b}$).  In fact, this is 
essentially \emph{never} the case:

\begin{prop}\label{meet-of-degree-and-codegree}
If $\coT(\mathbf{a}) \mathbin{\wedge_{\mathrm{T3}}} \mathbf{b} =
\mathbf{0}$ then $\mathbf{b} = \mathbf{0}$ (in fact, $\mathbf{b}$ need
not even be assumed to be a Turing degree here: a T3 degree will do).
\end{prop}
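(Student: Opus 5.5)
The plan is to read the statement as a direct instance of \cref{computability-of-meet-of-basic-and-t3-degrees}, and then to rule out the degenerate alternative that this proposition leaves open. Let $\mathbf{a} = [A]_{\mathrm{T0}}$, so that $\coT(\mathbf{a}) = \tilde{\mathbf{c}}_A = \mathbf{c}_{H_0^A,\,H_1^A}$. By \cref{definition-coding-decoding-degrees}, this is the degree of the \emph{basic} T3 oracle $\{H_0^A, H_1^A\}$. Let $\mathbf{b}$ be an arbitrary T3 degree.

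First, use that the meet is commutative to rewrite the hypothesis as $\mathbf{b} \meetDThree \coT(\mathbf{a}) = \mathbf{0}$, and in particular $\mathbf{b} \meetDThree \coT(\mathbf{a}) \leq \mathbf{0}$. Then apply \cref{computability-of-meet-of-basic-and-t3-degrees} with the following choices:
\begin{itemize}
\item $\mathbf{f} := \mathbf{b}$, which is an arbitrary T3 degree, as that proposition allows;
\item $\mathbf{g} := \coT(\mathbf{a})$, which is a basic T3 degree;
\item $\mathbf{h} := \mathbf{0}$, which is a T2 degree (indeed it is T0).
\end{itemize}
The proposition then yields $\mathbf{b} \leq \mathbf{0}$ or $\coT(\mathbf{a}) = \mathbf{0}$. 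Alternatively, \cref{computability-of-t3-meet} would work just as well here, since a basic oracle that is inhabited is totally inhabited and $\mathbf{0}$ is T1.

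It remains to exclude $\coT(\mathbf{a}) = \mathbf{0}$. By ¶\ref{when-are-oracles-computable}, a basic oracle $\mathscr{F}$ has degree $\mathbf{0}$ iff $\bigcap_{F \in \mathscr{F}} F$ is inhabited. Here that intersection is $H_0^A \cap H_1^A$, which is empty because the two sets are disjoint (as noted in \cref{definition-codegrees}). Hence $\coT(\mathbf{a}) \neq \mathbf{0}$. Another way to see this: $\coT(\mathbf{a}) = \mathbf{0}$ would give, via \cref{coding-and-decoding-give-omniscience}, that $\Om = \tilde{\mathbf{d}}_A \vee \mathbf{0} = \mathbf{a}$ is a Turing degree, which ¶\ref{reduction-from-om} rules out.

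Therefore $\mathbf{b} \leq \mathbf{0}$, that is, $\mathbf{b} = \mathbf{0}$. I do not expect any real obstacle. The only points needing care are to place the basic degree in the \emph{second} slot of \cref{computability-of-meet-of-basic-and-t3-degrees}, which is why commutativity of the meet is invoked, and to confirm that the argument never uses that $\mathbf{b}$ is T0, which justifies the parenthetical generalization to arbitrary T3 degrees.
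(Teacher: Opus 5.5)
Your proposal is correct and follows the paper's proof: the paper also treats the statement as a special case of \cref{computability-of-meet-of-basic-and-t3-degrees} (or \cref{computability-of-t3-meet}) with $\mathbf{h} = \mathbf{0}$, together with the observation that $\coT(\mathbf{a}) > \mathbf{0}$. Your justification of that observation, via the disjointness of $H_0^A$ and $H_1^A$ and the criterion of ¶\ref{when-are-oracles-computable}, is exactly what is needed.
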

\begin{proof}
This is just a special case of either one of
\cref{computability-of-t3-meet}
or \cref{computability-of-meet-of-basic-and-t3-degrees} (observing that
$\coT(\mathbf{a}) > \mathbf{0}$ always holds).
\end{proof}

\begin{rmk}
We have already mentioned in ¶\ref{digression-heyting-operation} that
the T3 degrees form not just a distributive lattice but in fact a
Heyting algebra.  Now there is one obvious order-reversing map which
comes to mind in a Heyting algebra, namely the negation map
$\mathbf{d} \mapsto {\neg_\TThree}\mathbf{d} := (\mathbf{d}
\mathbin{\Rightarrow_\TThree} \mathbf{0})$.  So the reader is
perhaps left to wonder whether our $\mathbf{d} \mapsto \coT(\mathbf{d})$ map
might be given, at least on the T0 degrees, by the negation map in the
Heyting algebra of T3 degrees (perhaps capped to $\Om$).

In fact, this is not the case: indeed, the meet $\mathbf{d}
\meetDThree \neg_\TThree \mathbf{d}$ of any $\mathbf{d}$ and its
negation is $\mathbf{0}$ by the very definition of negation; but we
have just seen in \ref{meet-of-degree-and-codegree} that $\mathbf{d}
\meetDThree \coT(\mathbf{d}) = \mathbf{0}$ is \emph{never} the case
except when already $\mathbf{d} = \mathbf{0}$.  In fact, the negation
operation is not interesting on the T0 degrees: it follows from our
remarks at the end of ¶\ref{digression-heyting-operation} that we have
$\neg_\TThree \mathbf{d} = \mathbf{0}$ for any $\mathbf{d} \in
\mathscr{D}_\TZero$.
\end{rmk}

\section{Topos-theoretic reformulation of some constructions}\label{section-topos-theory}

{\advance\leftskip by0.5\hsize\textit{„Die Mathematiker sind eine Art
    Franzosen: redet man zu ihnen, so übersetzen sie es in ihre
    Sprache, und dann ist es alsobald ganz etwas
    Anders.“}\penalty-500\quad — J. W. von Goethe\footnote{Quotation
  taken from \cite[§1279]{Goethe}. — English translation:
  “Mathematicians are like French people: when you talk to them they
  translate it into their own language, and then it soon turns into
  something completely different.”}\par}

\smallskip

We hasten to reassure the reader who does know anything about toposes
nor wish to that the present section is entirely optional.
Nevertheless, for those who enjoy abstract nonsense, we hope that the
following reformulation of some of our constructions will help
motivate them and shine some light on their significance.

We start with an informal explanation, then a precise definition, of
what Lawvere–Tierney topologies on the effective topos are, followed
by a summary statement of the main result by \cite{KiharaLT} and
\cite{KiharaOracles} regarding their correspondence with what we have
called the Arthur–Nimue–Merlin degrees (and how the T1 and T2 degrees
can be identified within this correspondence).  This subpart does not
assume prior familiarity with topos theory or realizability, and is
merely intended to recapitulate how the connection arises,
allowing us to rephrase \cref{reverse-embedding-theorem} as “we can embed
the Turing degrees backwards in the Lawvere–Tierney topologies on the
effective topos”.

However, from ¶\ref{specific-notations-for-topos-theoretic-section}
onwards, we do assume that the
reader is familiar with the theory of toposes in general, including
their “internal” logic, as well as the categorical point of view on
realizability — i.e., the effective topos — in particular.

Standard references for the former are: \cite{JohnstoneTopos}
(esp. chap. 3 and §5.4), \cite{LambekScott} (esp. §II.1–II.10),
\cite{MacLaneMoerdijk} (esp. chap. V \& VI), \cite{McLarty}
(esp. chap. 14 \& 21) and \cite{JohnstoneElephant} (esp. around
§§ A4.3–A4.4, D1 and D4).  For the latter: \cite{vanOostenBook} (but
see also \cite[chap. 24]{McLarty}).

\medskip

\thingy\label{informal-explanation-lawvere-tierney-topologies} Motivating the particular generalization of Turing oracles
described in \cref{t0-to-t3-oracles} to \cref{t0-to-t3-degrees} is its
connection with \textbf{Lawvere–Tierney topologies}.  Let us say a
word about this.

Lawvere–Tierney topologies go by various names (and can be seen in
various ways) according to the context being discussed: they are also
known as “\textbf{local operators}”.  In the context of elementary
toposes, they correspond to the subtoposes, i.e., embeddings of toposes, and
generalize the notion of Grothendieck topology for presheaf
toposes.  In the context of point-free topology (viꝫ. the study of locales), they
generally go by the name of “\textbf{nuclei}” on a frame/locale and
correspond to sublocales.  From a logician's perspective they can be
seen as a particular kind of modal operator on truth values
(such as “being locally true”).
Programmers or type theorists might prefer to think of them as monads
(the presentation used below should make it clear why this is so).

(For more on these various perspectives, we refer to
\cite[§3.1]{JohnstoneTopos} or \cite[chap. V, §1]{MacLaneMoerdijk} or
\cite[§21.1]{McLarty} for the definition of Lawvere–Tierney topologies
in the context of elementary toposes in general; and to
\cite[chap. II, §2.2]{JohnstoneStone} or \cite[chap. III, ¶5.3]{PicadoPultr} for
nuclei in the context of point-free topology; see \cite[§A4.4–§A4.5
  and §C1.1]{JohnstoneElephant} for a link between the two; see
\cite[§2.5.4]{vanOostenBook} for a presentation of Lawvere–Tierney
topologies, there known as local operators, in the context of
realizability; finally, one might refer to \cite{WilsonThesis}
concerning the order-theoretic properties of nuclei.)

\thingy Even without explaining what the “effective topos” is or what a
“Lawvere–Tierney topology” is in general, we can define a
“Lawvere–Tierney topology on the effective topos” as a map $J \colon
\mathcal{P}(\mathbb{N}) \to \mathcal{P}(\mathbb{N})$ such that
\begin{itemize}
\item $J$ is “computably monotone”: there is\footnote{The names
$\texttt{fmap},\texttt{return},\texttt{join}$ we have given to the
three parts above are taken from the conventions on monads of the
Haskell programming language, but in more mathematical terminology the
latter two would be called the unit $\eta$ and multiplication $\mu$ of
the monad whereas the former expresses functoriality.} a program
  $\texttt{fmap}$ (independent of $U,V \subseteq \mathbb{N}$) which,
  when given a program taking $U$ to $V$, returns a program taking
  $J(U)$ to $J(V)$;
\item $J$ is “computably inflationary”: there is a program
  $\texttt{return}$ (independent of $U \subseteq \mathbb{N}$) taking
  $U$ to $J(U)$;
\item $J$ is “computably idempotent”: there is a program
  $\texttt{join}$ (independent of $U \subseteq \mathbb{N}$) taking
  $J(J(U))$ to $J(U)$.
\end{itemize}
(Here we say that a program “takes $U$ to $V$” or that “when given an
element of $U$ returns an element of $V$” when it is the code of a
partial computable function which is defined at least on $U$ and such
that the image of elements of $U$ by the function fall in $V$.
See \ref{definition-lawvere-tierney-topologies} for a precise
reformulation of this definition.)

We also define a preorder $J \preceq K$ on Lawvere–Tierney topologies
as above when there is a program (independent of $U \subseteq
\mathbb{N}$) taking $J(U)$ to $K(U)$.

The following intuition might be helpful to keep in mind: if we think
of $U \subseteq \N$ as a problem to be solved (namely, the problem of
computably producing some value in the solution set $U$), then $J(U)$
is to be understood as a set of ways to solve the problem with the
help of some generalized computation device (i.e., generalized
oracle).  The three properties demanded of $J$ reflect the general
ideas that: first, if we know how to solve $V$ from a solution of $U$,
then we also know how to do this under help, second, that we can
always choose to do without help and solve $U$ directly, and third,
that our help is available without limit.

The way these notions arise naturally in the context of computability
is as follows: if $g\colon\mathbb{N}\to\mathbb{N}$ is an arbitrary integer
function considered as an ordinary Turing oracle, the map $J_g$ taking
$U\subseteq\mathbb{N}$ to the set $J_g(U) := \{e\in\mathbb{N} \mid
\varphi_e^g(0){\downarrow} \in U\}$ of oracle programs $e$ which, when
provided $g$ as oracle, terminate and return a value in $U$, satisfies
them, i.e., defines a Lawvere–Tierney topology.  Furthermore, $J_f
\preceq J_g$ occurs exactly when $f$ is Turing-reducible to $g$.  The
natural question is whether we can generalize ordinary Turing oracles
and Turing reducibility to make sense of all Lawvere–Tierney
topologies: the T3 degrees do just that.

Accordingly, the central result of \cite{KiharaLT} is that the
Arthur–Nimue–Merlin (“T3”) game degrees exactly describe the
Lawvere–Tierney topology on the effective topos (up to the equivalence
relation “$J \preceq K$ and $K \preceq J$”), along with their order
relation.  In fact, the $J_g$ associated to a given oracle $g$ is not
difficult to describe and is a straightforward generalization of the
$J_f$ described just above: if $U\subseteq\mathbb{N}$, then $J_g(U)$
is the set of (computable!)  Arthur strategies which, using the oracle
$g$ under consideration, and with the help of Nimue, successfully
produce an element of $U$.  A precise definition and theorem statement
are provided in \ref{statement-oracle-topology-correspondence} below.
Furthermore, the T2 and T1 degrees respectively appear precisely as
those defined by a $J$ which preserves (arbitrary) intersections,
resp. both intersections and unions, in $\mathcal{P}(\mathbb{N})$.

\medskip

Let us now make these informal notions precise:

\begin{dfn}\label{definition-lawvere-tierney-topologies} We use the notation
$U\Rrightarrow V$ to denote the set $\{e\in\mathbb{N} \mid \forall m\in
  U.\,(\varphi_e(m){\downarrow}\in V)\}$ of programs sending elements
  of $U$ to elements of $V$.

A \textbf{Lawvere–Tierney topology on the effective topos} (or just
“Lawvere–Tierney topology” when the context is clear) is a map $J \colon
\mathcal{P}(\mathbb{N}) \to \mathcal{P}(\mathbb{N})$ such that the
three sets
\begin{itemize}
\item $\displaystyle\bigcap_{U,V\subseteq\mathbb{N}}\big((U\Rrightarrow V) \Rrightarrow (J(U)\Rrightarrow J(V))\big)$
\item $\displaystyle\bigcap_{U\subseteq\mathbb{N}} \big(U \Rrightarrow J(U)\big)$
\item $\displaystyle\bigcap_{U\subseteq\mathbb{N}} \big(J(J(U))
  \Rrightarrow J(U)\big)$
\end{itemize}
are inhabited.  Furthermore, we quotient these by the equivalence
relation $\equiv$ described in the following paragraph.

For $J,K \colon \mathcal{P}(\mathbb{N}) \to \mathcal{P}(\mathbb{N})$
such maps, we write $J\preceq K$ when
$\displaystyle\bigcap_{U\subseteq\mathbb{N}} \big(J(U) \Rrightarrow
K(U)\big)$ is inhabited (this is a preorder relation); and we write
$J\equiv K$ when $J\preceq K$ and $K\preceq J$ both hold\footnote{We
emphasize that this means that there are computable maps (independent
of $U$) taking each $J(U)$ to the corresponding $K(U)$ and vice versa,
\emph{not} necessarily bijections.}.  We also write $[J]\leq [K]$ for
$J\preceq K$ where $[J]$ refers to the class of $J$ under $\equiv$.
\end{dfn}
\begin{proof}[References]
\cite[definition 1.1]{LeeVanOosten},
\cite[definition 1.2]{vanOostenPittsOperator}
\cite[definition 3.2]{KiharaLT} or
\cite[definition 3.3]{KiharaOracles}.
\end{proof}

Strictly speaking, the Lawvere–Tierney topology is the
class $[J]$ under $\equiv$ of maps $\mathcal{P}(\mathbb{N}) \to
\mathcal{P}(\mathbb{N})$ as described, but by abuse of language we
often refer to $J$ itself as a Lawvere–Tierney topology.

As we pointed out in ¶\ref{informal-explanation-lawvere-tierney-topologies}, for any $g \colon
\mathbb{N}\to\mathbb{N}$ (or indeed $g \colon \mathbb{N} \dasharrow
\mathbb{N}$ a T1 oracle), defining $J_g(U) := \{e\in\mathbb{N} \mid
\varphi_e^g(0){\downarrow} \in U\}$ gives a Lawvere–Tierney $J_g
\colon \mathcal{P}(\mathbb{N}) \to \mathcal{P}(\mathbb{N})$ (this is
easy to check).  Crucially, the T2 and T3 oracles also give
Lawvere–Tierney topologies, and indeed, the T3 oracles give all of
them, as we now explain.

\begin{dfn}\label{definition-oracle-topology-correspondence}
For every T3 oracle $g\colon\mathbb{N} \to
\mathcal{P}(\mathcal{P}(\mathbb{N}))$ we associate a map $J_g \colon
\mathcal{P}(\mathbb{N}) \to \mathcal{P}(\mathbb{N})$ as follows:
$J_g(U)$ is the set of Arthur strategies for producing (with Nimue's
help) an element in $U$; more formally: $J_g(U)$ is the set of Arthur
strategies which are part of an Arthur+Nimue winning strategy for the
reduction game of $\{U\}$ to $g$, where by $\{U\}$ we mean the basic
T3 oracle with this value (so that the initial configuration consists
of the secret part $U$, and Arthur's goal is to declare an element of $U$).

Conversely, to every Lawvere–Tierney topology
$J \colon \mathcal{P}(\mathbb{N}) \to \mathcal{P}(\mathbb{N})$ we
associate a T3 oracle $J^\natural \colon\mathbb{N} \to
\mathcal{P}(\mathcal{P}(\mathbb{N}))$ by: $J^\natural(m) =
\{U\in\mathcal{P}(\mathbb{N}) \mid m\in J(U)\}$.
\end{dfn}
\begin{proof}[References]
For the oracle-to-toplogy direction: \cite[observation 3.3]{KiharaLT},
where it is denoted $g^{\Game\rightarrow}$, or \cite[§5.3 before
  prop. 5.26]{KiharaOracles} where a closely related and
equivalent construction is denoted $j_U$; this is also related to
\cite[prop. 2.3]{LeeVanOosten} (but for basic oracles only).

For the topology-to-oracle direction:
\cite[theorem 2.4]{LeeVanOosten}, where it is denoted $f \mapsto
\theta$, or \cite[theorem 3.1]{KiharaLT}, where it is denoted
$\beta^{\leftarrow}$, or \cite[§5.3 before
  prop. 5.26]{KiharaOracles} where it is denoted $U_j$.
\end{proof}

\begin{thm}\label{statement-oracle-topology-correspondence}
The maps $g \mapsto J_g$ and $J \mapsto J^\natural$ defined
in \ref{definition-oracle-topology-correspondence} define
order-preserving (and hence, reflecting) bijections between the set
$\mathscr{D}_{\mathrm{T3}}$ of T3 degrees and the set of
Lawvere–Tierney topologies on the effective topos, each with their
natural order as previously defined.

Furthermore, under this correspondence, the T2 degrees
(resp. T1 degrees) $\leq\Om$ correspond precisely to those
Lawvere–Tierney topologies defined by a $J$ which preserves arbitrary
intersections (resp. both arbitrary intersections and arbitrary
unions)\footnote{Fine print: we allow the empty union (with
value $\varnothing$) in “arbitrary unions”, thus imposing
$J(\varnothing) = \varnothing$ (which corresponds to constraining the
degree to be $\leq\Om$).  Intersections, on the other hand, are
presumed to take place in some unspecified subset of $\mathbb{N}$ so
that the empty intersection does not give a constraint
on $J(\mathbb{N})$.} in $\mathcal{P}(\mathbb{N})$.
\end{thm}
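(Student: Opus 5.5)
The plan is to prove the first paragraph (the bijection) directly from the game definitions and then treat the T1/T2 characterization separately.

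\textbf{Maps are well defined.} For a T3 oracle $g$, I will check that $J_g$ is a Lawvere–Tierney topology.
\begin{itemize}
\item $\texttt{return}$ is the Arthur strategy that immediately declares its input.
\item $\texttt{fmap}$ post-composes Arthur's final declaration with the given program $U\Rrightarrow V$. Nimue's strategy is unchanged.
\item $\texttt{join}$ is concatenation. An element of $J_g(J_g(U))$ is a strategy whose declared value is itself an Arthur strategy for $U$. Arthur runs the outer strategy and then continues with the declared inner one. Nimue concatenates her two strategies; she may do so because she sees everything Arthur sees.
\end{itemize}
All three programs are uniform in $U,V$. Conversely, for a topology $J$, $J^\natural$ is visibly a T3 oracle.

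\textbf{Monotonicity.} I will show that $f\redTThree g$ implies $J_f\preceq J_g$. The uniform program turns an $f$-strategy for $U$ into a $g$-strategy by replacing each $f$-query with a run of the reduction strategy, exactly as in the proof of transitivity \cite[prop.~2.13]{KiharaLT}. Conversely, if $J\preceq K$ then $J^\natural\redTThree K^\natural$. On the initial configuration $(m,U)$ with $m\in J(U)$, Arthur computes $e(m)\in K(U)$ for the uniform program $e$, queries $K^\natural$ at $e(m)$, and declares the answer. Nimue plays $U\in K^\natural(e(m))$.

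\textbf{Round trips.} The key steps, in order:
\begin{itemize}
\item First, $g\equiv (J_g)^\natural$. For $g\redTThree (J_g)^\natural$: on $(n,G)$ with $G\in g(n)$, Arthur queries $(J_g)^\natural$ at the code of the one-query strategy ``ask $g$ at $n$, declare the answer''. This code lies in $J_g(G)$, so Nimue may play $G$, and Arthur declares Merlin's reply. For $(J_g)^\natural\redTThree g$: on $(m,U)$ with $m\in J_g(U)$, Arthur simply runs $m$ against the $g$ oracle, and Nimue plays the Nimue part witnessing $m\in J_g(U)$.
\item Second, $J\equiv J_{J^\natural}$. Here I expect the main obstacle, the direction $J_{J^\natural}(U)\Rrightarrow J(U)$ uniformly in $U$. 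An arbitrary winning strategy against $J^\natural$ is a well-founded tree of queries $m_i\in J(U_i)$, with Nimue's choices $U_i$ unknown to Arthur. It has to be compiled into a single element of $J(U)$ using only $\texttt{fmap}$, $\texttt{return}$ and $\texttt{join}$. I would try a recursion-theorem construction. Define a program $\Phi$ that, given an Arthur strategy $e$, inspects $e$'s first move:
\begin{itemize}
\item if $e$ declares $u$, then $\Phi$ returns $\texttt{return}(u)$;
\item if $e$ queries $m$, then $\Phi$ returns $\texttt{join}(\texttt{fmap}(\lambda v.\,\Phi(e\cdot v))(m))$, where $e\cdot v$ is $e$ continued after Merlin's reply $v$.
\end{itemize}
Correctness follows by induction on the well-founded game tree: after Nimue's choice $U_1\ni v$, the residual strategy $e\cdot v$ wins for every $v\in U_1$, so $\lambda v.\,\Phi(e\cdot v)$ lies in $U_1\Rrightarrow J(U)$. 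The subtle point is that the tree may be infinitely branching and Nimue's choice may depend on the history. For that reason the induction has to run over the tree of plays consistent with Nimue's strategy, using classical well-foundedness; the recursion theorem is what makes $\Phi$ a fixed program, independent of $U$. The reverse direction $J(U)\Rrightarrow J_{J^\natural}(U)$ is the one-query strategy, as before.
\end{itemize}

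\textbf{Second paragraph.} For the second paragraph of the statement, I would show that for a T2 oracle $g$, $J_g(U)$ is the set of strategies all of whose Merlin-plays end in $U$. This set is visibly an intersection-preserving condition, since there is no Nimue. Conversely, if $J$ preserves intersections, let $h(m):=\bigcap\{U\mid m\in J(U)\}$, which satisfies $m\in J(h(m))$. Then $J^\natural$ is equivalent to the T2 oracle $h$, with $h$ undefined when $J^\natural(m)=\varnothing$. When $J$ also preserves unions, $h(m)$ is forced to be a singleton or empty whenever defined. The case $\varnothing$ corresponds to degree $\top$, which is excluded by $J(\varnothing)=\varnothing$, i.e.\ the degree being $\leq\Om$. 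So $h$ is T1. The reverse inclusions are straightforward checks.
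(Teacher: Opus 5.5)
Your proposal is correct. For the second paragraph it is essentially the paper's argument. For T2 $g$, $J_g$ visibly preserves intersections, and for T1 $g$ it preserves unions. Conversely, $h(m)=\bigcap J^\natural(m)$ (the paper's $J^\flat$) is a T2 oracle equivalent to $J^\natural$, and it is forced to be single-valued when unions, including the empty one, are preserved.

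For the first paragraph the paper gives no argument and simply cites \cite[corollary 3.5]{KiharaLT}, whereas you prove it from scratch. You check the three Lawvere--Tierney conditions for $J_g$, show both assignments are monotone, and verify the two round trips. This does give mutually inverse order-preserving maps on the quotients.

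The only non-routine step is $J_{J^\natural}\preceq J$. You handle it with a recursion-theorem program $\Phi$ that compiles a winning strategy against $J^\natural$ into an element of $J(U)$ using \texttt{return}, \texttt{fmap} and \texttt{join}. Correctness is by induction on the classically well-founded tree of plays consistent with a fixed Nimue strategy, and this is sound, including the case where Nimue plays $\varnothing$, since then $\varnothing\Rrightarrow J(U)=\mathbb{N}$. It amounts to the special case $g=J^\natural$ of Kihara's theorem 3.4: the game topology of $g$ is the least Lawvere--Tierney topology above the monotone map $U\mapsto\{\langle n,e\rangle \mid \exists G\in g(n),\ e\in (G\Rrightarrow U)\}$. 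The same induction proves that general minimality statement.

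Your route is self-contained and uses nothing beyond the recursion theorem and classical well-founded induction. The citation is briefer, and its minimality formulation is what the paper later exploits in \cref{t3-oracle-to-internal-topology}. There it connects the game topology to the internal formula of \cref{smallest-l-t-topology-above-monotone-map}.
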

\begin{proof}[References]
The statement of the first paragraph is
\cite[corollary 3.5]{KiharaLT}.  The statement of the second paragraph
is \cite[prop. 3.7 and theorem 3.10]{KiharaOracles}, but to
spare the reader the inconvenience of having to convert between many
different notations we also give a direct proof below.
\end{proof}
\begin{proof}[Proof of the statement of the second paragraph.]
If $g$ is a T2 oracle, meaning that Nimue does not have any choice in
what to play, then $J_g(U)$ is the set of Arthur strategies which, no
matter what Merlin plays, will cause Arthur to declare a value in $U$:
with this description, it is obvious that $J_g$ preserves arbitrary
intersections.  If $g$ is a T1 oracle, meaning that Merlin does not
have any choice in what to play, then $J_g(U)$ is the set
$\{e\in\mathbb{N} \mid \varphi_e^g(0){\downarrow} \in U\}$ of Arthur
strategies which produce a value in $U$, so it is just the union of
the $J_g(u)$ for $u\in U$, and thus $J_g$ preserves arbitrary unions.

Conversely, if $J$ preserves arbitrary intersections (and in
particular, preserves inclusions), then so long as $m\in
J(\mathbb{N})$ there is a smallest set $U$ such that $m\in J(U)$,
namely $\bigcap\{U \mid m\in J(U)\} = \bigcap J^\natural(m)$, let us
denote this by $J^\flat(m)$ (essentially, Nimue's obvious best move):
this defines a T2 oracle $J^\flat$, and then $J^\natural(m)$ is just
the set of all $U$ such that $U\supseteq J^\flat(m)$, so it is clear
by inspecting the reduction games that $[J^\natural]_{\mathrm{T3}} =
[J^\flat]_{\mathrm{T2}}$.  If $J$ further preserves arbitrary unions,
then $J^\flat(m)$ is the $u$ for which $m \in J(\{u\})$, if it exists,
so $J^\flat$ in fact comes from a T1 oracle.
\end{proof}

\begin{exm}
As we mentioned earlier, if $g\colon\N\dasharrow\N$ is a T1 oracle,
its degree corresponds to $J_g(U) = \{e\in\mathbb{N} \mid
\varphi_e^g(0){\downarrow} \in U\}$.  In particular, the computable
degree $\mathbf{0}$ corresponds to $J_{\mathbf{0}}(U) =
\{e\in\mathbb{N} \mid \varphi_e(0){\downarrow} \in U\}$: but this is
equivalent to the much simpler Lawvere–Tierney topology $\id \colon U
\mapsto U$ (indeed, $\bigcap_{U\subseteq\mathbb{N}}
\big(J_{\mathbf{0}}(U) \Rrightarrow U\big)$ is inhabited by a code for
a universal program $e \mapsto \varphi_e(0)$, and conversely
$\bigcap_{U\subseteq\mathbb{N}} \big(U \Rrightarrow
J_{\mathbf{0}}(U)\big)$ is inhabited by a code for a
constant-producing program: this shows $J_{\mathbf{0}} \equiv \id$).

The omniscient degree $\Om$ (here seen as that of $\{\{u\} \mid u \in
\N\}$) has the defining property that there exist a common $e$ such
that $e \in J_\Om(U)$ for every inhabited $U$, but still
$J_\Om(\varnothing) = \varnothing$.  Indeed, $e$ can be the code of an
Arthur strategy which queries the oracle and declares the value he
receives from it: a corresponding Nimue strategy consists of choosing
an element $u$ of $U$ and playing $\{u\}$, forcing Merlin to show $u$
to Arthur.  More simply, this is equivalent to the Lawvere–Tierney
topology that takes every inhabited $U$ to $\N$ and $\varnothing$ to
$\varnothing$.

Finally, the $\top$ degree has the defining property that even
$J_\top(\varnothing)$ is inhabited.
\end{exm}

\thingy\label{algebraic-operations-on-l-t-topologies}
We mention at this point that it is also possible to describe
the algebraic operations (meets and joins, for which
cf. \ref{meet-of-t2-degrees} and \ref{join-of-degrees}, as well as the
Heyting operation briefly mentioned
in \ref{digression-heyting-operation}) at the level of the
Lawvere–Tierney topologies.  Specifically, here are formulas:
\begin{itemize}
\item The \emph{meet} of $J_1$ and $J_2$ is represented by $U \mapsto
  J_1(U) \sqcap J_2(U)$.
\item The \emph{join} of $J_1$ and $J_2$ is represented by $U \mapsto
  \bigcap_{V\subseteq\mathbb{N}} ((((J_1(V) \sqcup J_2(V))
  \Rrightarrow V) \sqcap(U\Rrightarrow V)) \penalty0 \Rrightarrow V)$.
\item The \emph{Heyting operation} of $J$ and $K$ is represented by $U
  \mapsto \bigcap_{V\subseteq\mathbb{N}} (J(V) \Rrightarrow K(U\sqcup
  V))$.
\end{itemize}
Here we have used the following notations for the corresponding
operations on $\P(\N)$:
\begin{itemize}
\item $U\sqcap V := \{\langle u,v\rangle \mid u\in U,\;v\in V\}$ is
  the Gödel coding of the Cartesian product\footnote{Elsewhere we have
  simply written this $U\times V$, but here, for consistency's sake we
  prefer to emphasize its connection to the meet operation.}.
\item $U \sqcup V := \{\langle 0, u \rangle \mid u \in U\} \cup
  \{\langle 1, v \rangle \mid v\in V\}$ is the labeled disjoint union.
\item $U\Rrightarrow V := \{e\in\mathbb{N} \mid \forall m\in
  U.\,(\varphi_e(m){\downarrow}\in V)\}$ as previously.
\end{itemize}
While we have expressed these three formulas for the effective topos,
they are just special cases of general formulas for these
operations for Lawvere–Tierney topologies on an arbitrary topos (or,
indeed, nuclei on a frame): the formula for the meet of topologies is
immediate by the fact that Lawvere–Tierney topologies are ordered
pointwise, the formula for the join will follow from
\cref{smallest-l-t-topology-above-monotone-map} below, and for the
formula for the Heyting operation we refer to \cite[chap. II, §2.5,
  second line of p. 52]{JohnstoneStone} or \cite[chap. 6,
  theorem 23.4(2)]{WilsonThesis} or \cite{MO444977}.

\thingy\label{explicit-formulas-for-coding-decoding-topologies}
As we will see below,
it is even possible to compute explicitly the Lawvere–Tierney
$J_{\mathbf{d}_{P,Q}}$ and $J_{\mathbf{c}_{P,Q}}$ associated to the
degrees $\mathbf{d}_{P,Q}$ and $\mathbf{c}_{P,Q}$ respectively (in the
sense of \ref{definition-oracle-topology-correspondence}): they are
given (up to equivalence) by:
\[
J_{\mathbf{d}_{P,Q}}(U) = \bigcap_{V\subseteq\mathbb{N}}
(((((P\sqcap(\{0\}\Rrightarrow V))\cup(Q\sqcap(\{1\}\Rrightarrow V)))\Rrightarrow V)
\sqcap(U\Rrightarrow V)) \Rrightarrow V)
\]
\[
J_{\mathbf{c}_{P,Q}}(U) = \bigcap_{V\subseteq\mathbb{N}}
(((((P\Rrightarrow V)\cup(Q\Rrightarrow V))\Rrightarrow V)
\sqcap(U\Rrightarrow V)) \Rrightarrow V)
\]
(Note that, here, $\cup$ is not a typo: it is the set-theoretical
union.)  These formulas will be
explained in ¶\ref{explanation-explicit-formulas-for-coding-decoding-topologies} but do
not seem particularly useful for the study of $\mathbf{d}_{P,Q}$ and
$\mathbf{c}_{P,Q}$, so we just mention them in passing.

\bigskip

\textit{(From this point on, we assume that assume familiarity with
  the theory of toposes and the category-theoretic point of view on
  realizability.)}

\thingy\label{specific-notations-for-topos-theoretic-section}
\textbf{Additional notations for this section.} If $S$ is a
set, we denote by $\nabla S$ the corresponding “constant” object in
the effective topos (\cite[§2.4]{vanOostenBook}), which, in practice,
is represented by the assembly $(S, \; (s \mapsto \mathbb{N}))$ (in
other words, the underlying set is $S$ and every natural number
realizes every element of $S$).

We denote by $\Omega$ the subobject classifier of a topos, and
$\mathbf{N}$ its natural numbers object.  We denote by $\llbracket
\phi \rrbracket$ the truth value of a proposition $\phi$ if there is a
risk of ambiguity, but we will often use this without the brackets
(e.g., in \cref{smallest-monotonic-map-making-subobject-dense} we write
$f(x\in A)$ for $f(\llbracket x\in A\rrbracket)$).

\bigskip

We first recall a few facts which are part of the topos-theoretic (or
frame-theoretic) folklore:

\begin{lem}\label{smallest-monotonic-map-making-subobject-dense}
If $A\hookrightarrow X$ is a subobject of an object in a topos, then
the smallest monotonic $f\colon\Omega\to\Omega$ such that $\forall
x:X.\; f(x\in A)$ exists and is given explicitly by:
\[
u \; \mapsto \; \exists x:X.\; ((x\in A) \Rightarrow u))
\]
\end{lem}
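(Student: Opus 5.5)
We work in the internal logic of the topos, where $\Omega$ is a complete Heyting algebra and $\exists$ is internally meaningful. Write $g(u) := \exists x:X.\; ((x\in A) \Rightarrow u)$. The plan has three steps: show $g$ is monotonic, show it satisfies $\forall x:X.\; g(x\in A)$, and show $g \leq f$ for every monotonic $f$ satisfying the same condition. Here "smallest" refers to the pointwise internal order $f \leq f'$ iff $\forall u:\Omega.\; f(u) \Rightarrow f'(u)$. All reasoning is intuitionistic, so we must avoid case distinctions on $x\in A$.

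\textbf{Monotonicity.} Suppose $u \Rightarrow u'$. A witness $x$ with $(x\in A)\Rightarrow u$ also satisfies $(x\in A)\Rightarrow u'$, by composing implications. Hence $g(u)\Rightarrow g(u')$.

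\textbf{The defining property.} Given $x:X$, take $u := \llbracket x\in A\rrbracket$. The same $x$ is a witness, since $(x\in A)\Rightarrow(x\in A)$ holds trivially. Hence $g(x\in A)$ holds for every $x$.

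\textbf{Minimality.} Let $f$ be monotonic with $\forall x:X.\; f(x\in A)$, and fix $u:\Omega$ with $g(u)$. Then we obtain some $x$ with $(x\in A)\Rightarrow u$, i.e.\ $\llbracket x\in A\rrbracket \leq u$ in $\Omega$. Monotonicity of $f$ gives $f(x\in A)\Rightarrow f(u)$, and the hypothesis on $f$ gives $f(x\in A)$, so $f(u)$. Since $f(u)$ does not mention $x$, the $\exists$-elimination is legitimate, and we conclude $\forall u.\; g(u)\Rightarrow f(u)$.

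The only point needing care is the meaning of "monotonic" and of the conclusion. Monotonicity must be the internal statement $\forall u,u'.\,(u\Rightarrow u')\Rightarrow(f(u)\Rightarrow f(u'))$, not external monotonicity on global elements. The smallest-element claim must likewise be read internally in the poset $\Omega^\Omega$ (or, if stated externally, for arbitrary generalized elements). With that reading, the argument above is a direct internal proof and has no real obstacle.
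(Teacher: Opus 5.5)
Your proof is correct and is essentially the paper's argument. The paper proves that a monotone $f$ satisfies $\forall x:X.\; f(x\in A)$ iff $\exists x:X.\,((x\in A)\Rightarrow u) \leq f(u)$ for all $u$; its two directions are your ``defining property'' step (taking $f$ to be your $g$) and your minimality step, and your explicit check that $g$ is monotonic fills in a point the paper leaves implicit.
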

\begin{proof}
We need to show (working internally) that $f$ satisfies $\forall
x:X.\; f(x\in A)$ iff we have $(\exists x:X.\; ((x\in A) \Rightarrow
u)) \; \leq \; f(u)$ for every $u:\Omega$.

For the “if” part, assume $(\exists x:X.\; ((x\in A) \Rightarrow u))
\, \Rightarrow \, f(u)$, and let $x \in X$.  We trivially have $((x\in
A) \Rightarrow (x\in A))$, so $f(x\in A)$, as claimed.

For the “only if” part, assume $\forall x:X.\; f(x\in A)$, and assume
$(x\in A) \Rightarrow u$ for some $x$: then $f(x\in A) \Rightarrow
f(u)$ because $f$ is monotonic, so we have $f(u)$, as claimed.
\end{proof}

\begin{prop}\label{smallest-l-t-topology-above-monotone-map}
If $f\colon\Omega\to\Omega$ is order-preserving where $\Omega$ is the
set of truth values in a topos, then the smallest Lawvere–Tierney
topology $j\colon\Omega\to\Omega$ that satisfies $f\leq j$ exists and
is given explicitly by:
\[
u \; \mapsto \; \forall v:\Omega.\;(((f(v) \Rightarrow v)\land (u\Rightarrow v))\Rightarrow v)
\]
\end{prop}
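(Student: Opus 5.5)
Write $j_f(u) := \forall v:\Omega.\;(((f(v) \Rightarrow v)\land (u\Rightarrow v))\Rightarrow v)$ for the proposed formula. The plan is to reason internally in the topos, where $\Omega$ is a complete Heyting algebra and a Lawvere–Tierney topology is the same as a nucleus: a map $j\colon\Omega\to\Omega$ that is inflationary ($u\le j(u)$), idempotent ($j(j(u))\le j(u)$) and preserves binary meets. Equivalently, it is a monotone inflationary idempotent map satisfying $j(u)\land w\le j(u\land w)$. The key observation is that $j$ is a nucleus iff the set of its fixed points $\Omega_j=\{v\mid j(v)\Rightarrow v\}$ is closed under arbitrary meets and under exponentials $w\Rightarrow v$ for arbitrary $w$. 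In that case $j(u)=\bigwedge\{v\in\Omega_j\mid u\le v\}$. The formula $j_f(u)$ is exactly the meet of all $v$ with $u\le v$ that are ``$f$-closed'', meaning $f(v)\le v$. So I will show that $j_f$ is the nucleus whose fixed points are these $f$-closed elements, and that it is the least nucleus above $f$.

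First, $j_f$ is a nucleus. Let $C=\{v\mid f(v)\le v\}$. I would check that $C$ is closed under arbitrary meets: if $v_i\in C$, then $f(\bigwedge v_i)\le f(v_i)\le v_i$ by monotonicity of $f$. It is not, however, closed under $w\Rightarrow v$ in general, since $f$ need not satisfy any meet-compatibility. The fix is to use, instead of $C$, the set $C'=\{v\mid \forall w.\,(f(w)\land (w\Rightarrow v))\le v\}$. This set is closed under meets and under exponentials. For the latter, given $a$, I would show $f(w)\land(w\Rightarrow(a\Rightarrow v))\land a\le v$ by using $w\Rightarrow(a\Rightarrow v)=(w\land a)\Rightarrow v$ and monotonicity $f(w\land a)\le f(w)$. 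The step I expect to be the main obstacle is the inequality in the opposite direction: one needs $f(w)\land a$ to be compared with $f(w\land a)$, and $f$ only gives $f(w\land a)\le f(w)$, which is the wrong way. So $C'$ has to be chosen with care, and I would then check that the resulting meet-of-closed-elements formula agrees with $j_f$. On this point I am unsure whether the stated formula literally corresponds to $C$ or to $C'$. An alternative route avoids this issue. Internally, $u\mapsto \forall v.((g(v)\Rightarrow v)\land(u\Rightarrow v))\Rightarrow v$, where $v$ ranges over \emph{all} of $\Omega$, already builds in $u\Rightarrow v$. Inflationarity is then immediate: take $u$ itself, since if $u$ holds then $u\Rightarrow v$ gives $v$. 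Meet preservation and idempotence can be checked directly by instantiating $v$ with $w\Rightarrow v$, and this is where the internal logic makes the computation routine, because $(u\Rightarrow v)$ is handled pointwise.

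Second, I will show $f\le j_f$. Assume $f(u)$, and let $v$ satisfy $f(v)\Rightarrow v$ and $u\Rightarrow v$. Then $u\le v$, so monotonicity gives $f(u)\le f(v)$, hence $f(v)$ holds and therefore $v$ holds. Third, I will show minimality. Let $j$ be any topology with $f\le j$. Instantiate $v:=j(u)$. Then $f(j(u))\le j(j(u))\le j(u)$, and $u\le j(u)$, so $j_f(u)\le j(u)$.

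Assembling these steps gives existence and the explicit formula. The main effort lies in the meet-preservation and idempotence checks for $j_f$.
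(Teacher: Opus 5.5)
\textbf{There is a genuine gap: you never establish that $j_f$ preserves binary meets.} The rest is sound. Your arguments that $f\le j_f$, and that $j_f\le j$ for every topology $j\ge f$, are correct. Inflationarity, monotonicity and idempotence of $j_f$ are indeed routine. The missing step is the one with real content. Since $j_f$ is a closure operator, your own characterization of nuclei makes meet preservation equivalent to closure of its fixed-point set under $w\Rightarrow(-)$. That fixed-point set is exactly $C=\{v\mid f(v)\le v\}$: if $v\in C$, instantiating at $v$ gives $j_f(v)\le v$; if $j_f(v)=v$, then $f(v)\le j_f(v)=v$.

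So your assertion that $C$ is \emph{not} closed under $w\Rightarrow(-)$ would, if true, refute the proposition. The detour through $C'$ changes nothing, because internally $C'=C$. Taking $w:=v$ gives one inclusion. For the other, $w\Rightarrow v$ means $w\le v$, so $f(w)\le f(v)\le v$. The inequality you get stuck on, $a\land f(w)\le f(a\land w)$, is precisely what is needed. Your ``alternative route'' has the same hole: to instantiate the bound variable at $w\Rightarrow v$ you must check $f(w\Rightarrow v)\Rightarrow(w\Rightarrow v)$. That is again closure of $C$ under $w\Rightarrow(-)$, and calling it routine does not supply it.

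The missing idea is that every morphism $\phi\colon\Omega\to\Omega$ of a topos is internally \emph{strong}: $a\land\phi(w)\le\phi(a\land w)$, and likewise $a\land\phi(a\Rightarrow v)\le\phi(v)$. To see this, reason internally and assume $a$. Then $a=\top$ by propositional extensionality, so $a\land w=w$ and $a\Rightarrow v=v$, and $\phi$ respects equality. For $v\in C$ this gives $w\land f(w\Rightarrow v)\le f(v)\le v$, i.e.\ $w\Rightarrow v\in C$. Meet preservation then follows as you intended. Take $v\in C$ with $u\land w\le v$. Since $w\Rightarrow v$ lies in $C$ and is above $u$, instantiating $j_f(u)$ at it yields $w\Rightarrow v$; then $j_f(w)$ instantiated at $v$ yields $v$.

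This is not a formality: the statement really uses that $f$ is a morphism of the topos, not an arbitrary monotone map on a frame. On the frame $\mathcal{P}(\{1,2\})$, take the monotone map sending $\{1\}$ to $\{1,2\}$ and fixing all other elements. It is not strong, since $\{2\}\land f(\{1\})=\{2\}\not\le\varnothing=f(\varnothing)$. The meet-of-$C$ formula then gives $j_f(\{1\})\land j_f(\{2\})=\{2\}\neq\varnothing=j_f(\{1\}\land\{2\})$. So in that setting the formula only produces the least closure operator above $f$, not a nucleus.
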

\begin{proof}[References]
See \cite[prop. 1.2]{LeeVanOosten} and the second line of the proof
there.  In the interest of the history of this folkore result,
additional references for it and closely related statements and/or the
following corollary include: \cite[theorem 3.57]{JohnstoneTopos} (the
core idea, which is attributed there to André \textsc{Joyal}),
\cite[prop. 16.3]{HylandEff} (stated for the effective topos,
but this is not relevant), \cite[prop. 1]{JohnstoneNote} (under the
assumption that $f$ is a “prenucleus”) and \cite[corollary
  A4.5.13(i)]{JohnstoneElephant} (without the explicit formula).  We
might also mention \cite[definition 4.2(a)]{CaramelloIL} even though
it is couched in the form of satisfying a logical formula rather than
making a subobject dense.  See
also \cite{MO484068} for some more context and yet another proof.
\end{proof}

\begin{cor}\label{smallest-l-t-topology-making-subobject-dense}
If $A\hookrightarrow X$ is a subobject of an object in a topos, then
the smallest Lawvere–Tierney topology $j$ that makes $A$ $j$-dense
in $X$ (i.e., satisfies $\forall x:X.\; j(x\in A)$) exists and is
given explicitly by:
\[
j_{A\hookrightarrow X}\colon
u \; \mapsto \; \forall v:\Omega.\;((((\exists x:X.\; ((x\in A) \Rightarrow v))\Rightarrow v)\land (u\Rightarrow v))\Rightarrow v)
\]
\end{cor}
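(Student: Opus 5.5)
The plan is to combine \cref{smallest-monotonic-map-making-subobject-dense} with \cref{smallest-l-t-topology-above-monotone-map}, working throughout in the internal logic of the topos. The key observation is that a Lawvere–Tierney topology $j$ satisfies $\forall x:X.\; j(x\in A)$ if and only if $g\leq j$, where $g\colon\Omega\to\Omega$ denotes the map $u\mapsto \exists x:X.\;((x\in A)\Rightarrow u)$.

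First I would check that $g$ is order-preserving. This is immediate: if $u\Rightarrow u'$, then $(x\in A)\Rightarrow u$ implies $(x\in A)\Rightarrow u'$, and the existential quantifier is monotone.

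Next I would establish the equivalence above. Any Lawvere–Tierney topology $j$ is in particular monotonic. So by \cref{smallest-monotonic-map-making-subobject-dense}, applied with $f := j$, the condition $\forall x:X.\; j(x\in A)$ holds exactly when $g\leq j$. The only point to verify is that the lemma's characterization really is the biconditional "$f$ satisfies the density condition iff $g\leq f$". This is exactly what its proof shows, and it uses nothing about $f$ beyond monotonicity.

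Finally, I would apply \cref{smallest-l-t-topology-above-monotone-map} to the monotone map $g$. It yields a smallest Lawvere–Tierney topology $j$ with $g\leq j$, given by
\[
u\mapsto\forall v:\Omega.\;(((g(v)\Rightarrow v)\land(u\Rightarrow v))\Rightarrow v).
\]
By the previous step, the topologies with $g\leq j$ are precisely those making $A$ $j$-dense in $X$. Hence this $j$ is the smallest such topology, and substituting the definition of $g(v)$ gives literally the displayed formula for $j_{A\hookrightarrow X}$.

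There is no real obstacle here. The only care needed is to interpret "smallest" internally, i.e.\ pointwise in $\Omega$, consistently between the lemma and the proposition, and to note that both are stated for arbitrary monotone maps, so the topology $j$ qualifies.
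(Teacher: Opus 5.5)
Your proposal is correct and is essentially the paper's own argument: the paper derives the corollary in one line from \cref{smallest-monotonic-map-making-subobject-dense} and \cref{smallest-l-t-topology-above-monotone-map}. You additionally make explicit three points the paper leaves tacit: that $u\mapsto\exists x:X.\,((x\in A)\Rightarrow u)$ is order-preserving, that every Lawvere--Tierney topology is monotone, and that the lemma's proof really establishes the biconditional ``$f$ makes $A$ dense iff $g\leq f$'' for monotone $f$.
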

\begin{proof}
The corollary follows trivially from
\cref{smallest-monotonic-map-making-subobject-dense} and
\cref{smallest-l-t-topology-above-monotone-map}.
\end{proof}

\begin{rmk}\label{remark-general-topos-reducibility}
For what it's worth, when $A \hookrightarrow X$ and $B \hookrightarrow
Y$ are two subobjects in a topos, we can use the above to write down a
logical formula expressing the fact that $j_{A\hookrightarrow X} \leq
j_{B\hookrightarrow Y}$, namely:
\[
\begin{aligned}
\forall v:\Omega.\;(&((\exists y:Y.\; ((y\in B) \Rightarrow v))\Rightarrow v)\\
\Rightarrow\;&((\exists x:X.\; ((x\in A) \Rightarrow v))\Rightarrow v))
\end{aligned}
\]
(It is a straightforward exercise in higher-order propositional
logic to check that this formula is indeed equivalent to $\forall
u:\Omega.\; (j_{A\hookrightarrow X}(u) \Rightarrow j_{B\hookrightarrow
  Y}(u))$ where $j_{A\hookrightarrow X}$ and $j_{B\hookrightarrow Y}$
are given in \ref{smallest-l-t-topology-making-subobject-dense}.)

While this formula does not seem particularly engaging nor even really
useful, it is interesting to know that it exists, in that (a) it gives
us a notion of Turing-like reducibility for any kind of subobjects in
any topos (maybe best applied to subobjects of the form $A \cup
(X\setminus A)$ to better imitate the usual Turing reducibility), and
(b) specifically for the effective topos, combined with
propositions \ref{t3-oracle-to-internal-topology}
and \ref{other-oracles-to-internal-topology}, it lets us, in
principle, express the notions of reducibility of our T0–T3 oracles
purely in terms of realizability and without any reference to games or
oracles (cf. \cite{CSTSE55511} for an explicit unwinding of ordinary
Turing reducibility from this formula).
\end{rmk}

\thingy\label{preparation-for-t3-oracle-to-internal-topology}
Let us now also explain or recall the internal perspective
on how a T3 oracle $g \colon
\mathbb{N} \to \mathcal{P}(\mathcal{P}(\mathbb{N}))$ gives rise
to a Lawvere–Tierney topology on the effective topos.

First, we have a standard map $\varpi\colon
\nabla\mathcal{P}(\mathbb{N}) \to \Omega$ (represented simply by the
identity map on $\mathcal{P}(\mathbb{N})$).  We note that it can be
defined internally by identifying $\nabla\mathcal{P}(\mathbb{N})$ with
the object of $(\neg\neg)$-closed subobjects of $\mathbf{N}$ (meaning
the internal hom object $(\nabla 2)^{\mathbf{N}}$, where $\nabla 2$ is
itself seen as the object $\{p\in\Omega \mid \neg\neg p = p\}$ of
$(\neg\neg)$-stable truth values; see \cite[p. 129]{vanOostenBook}):
then $\varpi(G)$ is the truth value $\llbracket \exists
v:\mathbf{N}.(v\in G) \rrbracket$ of “$G$ is inhabited” (see
\textit{op. cit.}, proof of prop. 3.1.11, which states that
$\varpi$ is surjective).

Next, $\nabla\mathcal{P}(\mathcal{P}(\mathbb{N}))$ can similarly be
defined as the object of $(\neg\neg)$-closed subobjects
of $\nabla\mathcal{P}(\mathbb{N})$.  A map of sets $g \colon
\mathbb{N} \to \mathcal{P}(\mathcal{P}(\mathbb{N}))$ defines a
morphism $\mathbf{N} \to \nabla\mathcal{P}(\mathcal{P}(\mathbb{N}))$
in the effective topos, which we somewhat abusively still denote
by $g$.  We are now concerned with two subobjects of $\mathbf{N}
\times \nabla\mathcal{P}(\mathcal{P}(\mathbb{N}))$ associated with the
situation.  The first is the subobject of pairs $\{(n,G) \mid G \in
g(n)\}$ (which we can think of as a move $n$ of Arthur and one $G$ of
Nimue for the oracle $g$): note that it has the set of the same
description as underlying set, and the only realizer of $(n,G)$ is
simply the integer $n$ itself.  The second is the subobject $\{(n,G) \mid
G \in g(n) \,\land\, \varpi(G)\}$ of the former consisting of pairs
such that, additionally, $G$ is inhabited: note that now realizers of
$(n,G)$ are given by $\{n\} \times G$ (we can think of them as a move
with a response by Merlin).

\begin{prop}\label{t3-oracle-to-internal-topology}
If $g \colon \mathbb{N} \to \mathcal{P}(\mathcal{P}(\mathbb{N}))$ is a
T3 oracle, then the Lawvere–Tierney topology\footnote{We use a
lowercase $j$ to refer to the morphism in $\mathbf{Eff}$ to contrast
with the uppercase $J$ referring to a map $\mathcal{P}(\mathbb{N}) \to
\mathcal{P}(\mathbb{N})$ that represents it.} on the effective topos
$j_g\colon\Omega\to\Omega$ associated to $g$ (in the sense
of \cref{definition-oracle-topology-correspondence}) is the one given
by \cref{smallest-l-t-topology-making-subobject-dense} for the
subobject
\[
\{(n,G) \mid G \in g(n) \,\land\, \varpi(G)\}
\hookrightarrow
\{(n,G) \mid G \in g(n)\}
\]
described above.
\end{prop}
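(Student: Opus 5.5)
The plan is to reduce the statement to a comparison of two external maps $\P(\N)\to\P(\N)$ up to $\equiv$. Write $X := \{(n,G) \mid G\in g(n)\}$ and $A := \{(n,G)\mid G\in g(n)\land\varpi(G)\}$.

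\emph{Step 1: compute the monotone map.} First I would compute, in realizability terms, the monotone map $f(u) := \exists x:X.\,((x\in A)\Rightarrow u)$ of \cref{smallest-monotonic-map-making-subobject-dense}. Recall the realizers described in ¶\ref{preparation-for-t3-oracle-to-internal-topology}: an element $(n,G)$ of $X$ is realized by $n$, and $(n,G)\in A$ is realized by the elements of $\{n\}\times G$. Unwinding the realizability clauses for $\exists$ and $\Rightarrow$, I expect that for a truth value represented by $V\subseteq\N$, $f(V)$ is represented, up to $\equiv$, by
\[
F(V) := \bigcup_{n\in\N}\;\bigcup_{G\in g(n)} \{\langle n,e\rangle \mid e\in (G\Rrightarrow V)\}.
\]
The coding of $\{n\}\times G$ versus $G$ only costs a computable translation, independent of $V$. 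The point is that the union over $G$ is taken \emph{outside}, and no realizer of $G$ is required, because $G$ ranges over a $\nabla$-object. This $F(V)$ is exactly ``one round of the game'': Arthur plays $n$, Nimue secretly picks $G$, and the continuation $e$ must handle every reply $v\in G$ from Merlin. By \cref{smallest-l-t-topology-making-subobject-dense}, $j_{A\hookrightarrow X}$ is then the least Lawvere–Tierney topology $K$ with $F\preceq K$. So it suffices to prove two things: $F\preceq J_g$, and $J_g\preceq K$ for every topology $K$ with $F\preceq K$. I would also note in passing that $J_g$ is itself a topology, which is part of \cref{statement-oracle-topology-correspondence}.

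\emph{Step 2: $F\preceq J_g$.} Given $\langle n,e\rangle\in F(V)$, the program will output the Arthur strategy ``query $n$, receive $v$, declare $\varphi_e(v)$''. The accompanying Nimue strategy plays a $G\in g(n)$ with $e\in G\Rrightarrow V$. This $G$ exists by the definition of $F(V)$, and it may be chosen noncomputably, which is allowed. The strategy wins the game for $\{V\}$, so the uniform translation lands in $J_g(V)$.

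\emph{Step 3: $J_g\preceq K$.} This is the main obstacle. Let $K$ be a topology with programs $\texttt{return}$, $\texttt{join}$, $\texttt{fmap}$ and a program $t$ witnessing $F(W)\Rrightarrow K(W)$ uniformly in $W$. Using Kleene's recursion theorem, I would define a program $r$ that takes an Arthur strategy $\sigma$ (relativised to a history of Merlin moves) and proceeds as follows.
\begin{itemize}
\item If $\sigma$ declares $u$, it returns $\texttt{return}(u)$.
\item If $\sigma$ queries $n$, it returns $\texttt{join}(t(\langle n, e\rangle))$, where $e$ is the program $v\mapsto r(\sigma\text{ extended by }v)$.
\end{itemize}
Correctness, namely $r(\sigma)\in K(V)$ whenever $\sigma$ belongs to a winning Arthur+Nimue strategy for $\{V\}$, would be proved by well-founded induction on the tree of positions consistent with the fixed Nimue strategy. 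This tree is well-founded precisely because the strategy is winning: every play ends in finite time. At a query node, Nimue's choice is some $G\in g(n)$. The induction hypothesis gives $e\in G\Rrightarrow K(V)$, so $\langle n,e\rangle\in F(K(V))$, hence $t(\langle n,e\rangle)\in K(K(V))$ and $\texttt{join}$ lands in $K(V)$. If $G$ is empty, Merlin loses immediately, and the condition $e\in G\Rrightarrow K(V)$ holds vacuously. The points that need care are that the induction uses classical well-foundedness on a noncomputable tree while the program $r$ itself does not depend on Nimue or on $V$, and that positions where $g(n)$ is not inhabited cannot occur in a winning play. Combining Steps 2 and 3 gives $J_g\equiv j_{A\hookrightarrow X}$, as claimed.
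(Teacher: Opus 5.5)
Your proof is correct. Step~1 coincides with the paper's argument: the paper also unwinds $\exists (n,G).\,(G\in g(n)\land(\varpi(G)\Rightarrow u))$ into the codes $\langle n,e\rangle$ with $e\in G\Rrightarrow U$ for some $G\in g(n)$, and identifies this with \textsc{Kihara}'s $g^{\rightarrow}$.

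The two proofs differ in what comes next. The paper does not prove that the least topology above this map is $J_g$. It cites \cite[theorem 3.4]{KiharaLT}, which says that the topology generated from $g^{\rightarrow}$ via \cref{smallest-l-t-topology-above-monotone-map} is the game topology $g^{\Game\rightarrow}$. Your Steps~2 and~3 are a self-contained proof of that cited result. In one direction, a single round of the game realizes $F$. In the other, the recursion theorem unfolds a winning strategy into nested applications of $t$ and \texttt{join}, and correctness follows by induction on the classically well-founded (possibly infinitely branching) tree of plays consistent with Nimue's strategy.

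Your route is more transparent. It shows that the game topology is exactly one round of querying, iterated. It also shows that only \texttt{return}, \texttt{join} and $F\preceq K$ are needed; your \texttt{fmap} is never used, because the continuation $e$ is already packaged into the realizers of $F$. The paper's route is shorter but leaves the substantive content inside the citation.

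You still use that $J_g$ is a topology, to obtain $j_{A\hookrightarrow X}\preceq J_g$ from minimality. Taking this from \cref{statement-oracle-topology-correspondence} is legitimate. It is also easy to check directly:
\begin{itemize}
\item for \texttt{return}, declare at once;
\item for \texttt{fmap}, post-compose the final declaration;
\item for \texttt{join}, run the outer strategy until it declares an inner one, then run that, with Nimue switching to a strategy witnessing the inner one.
\end{itemize}
Doing so would make your argument independent of \cite{KiharaLT}.
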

\begin{proof}[Explanation]
This is essentially a restatement
of \cite[prop. 2.3]{LeeVanOosten}, but since the latter is left
to the reader (and since we need to be sure that the construction does
agree with the one of \cite{KiharaLT} on which our theory is based),
we give more explanations.  The map $f$ associated to the
inclusion $\{(n,G) \mid G \in g(n) \,\land\, \varpi(G)\} \hookrightarrow
\{(n,G) \mid G \in g(n)\}$ under consideration
by means of \cref{smallest-monotonic-map-making-subobject-dense}
is $u \; \mapsto \;
\exists (n,G). \; (G \in g(n) \,\land\, (\varpi(G) \Rightarrow u))$.
Now a realizer for $\exists (n,G). \; (G \in g(n) \,\land\, (\varpi(G)
\Rightarrow u))$, if we denote $U \subseteq \mathbb{N}$ for the set
of realizers of the truth value $u:\Omega$,
is the code $\langle n,e\rangle$ for a pair where, for
some $G \in g(n)$ we have $e \in (G \Rrightarrow U)$.  Up to trivial
notational changes, this is what is denoted $g^\rightarrow(U)$ in
\cite[beginning of the proof of theorem 3.1]{KiharaLT}.  Now
\textit{op. cit.}, theorem 3.4 ensures that the $j$ defined from $f$
by \ref{smallest-l-t-topology-above-monotone-map} is the Lawvere–Tierney
topology associated with $g$ (denoted there as $g^{\Game\rightarrow}$).
\end{proof}

\thingy\label{other-oracles-to-internal-topology} When $g\colon
\mathbb{N} \dasharrow \mathcal{P}(\mathbb{N})$ is a T2 oracle, the
above description of $j_g$ simplifies as follows.  First, let us
consider $g$ as a total function $g\colon D_g \to
\mathcal{P}(\mathbb{N})$ (with $D_g := \{n\in\mathbb{N} :
g(n){\downarrow}\}$).  So $D_g$ defines a $(\neg\neg)$-closed
subobject of $\mathbf{N}$ in $\mathbf{Eff}$ (very accurately, it is
$\nabla D_g \cap \mathbf{N}$ where the intersection is taken as
subobjects of $\nabla\mathbb{N}$), which, slightly abusively, we still
denote by $D_g$.  Then the set map $g\colon D_g \to
\mathcal{P}(\mathbb{N})$ gives rise to a morphism $D_g \to \nabla
\mathcal{P}(\mathbb{N})$ in $\mathbf{Eff}$, and the above description
translates to $j_g$ being the topology associated to the subobject
\[
\{n \mid n\in D_g \,\land\, \varpi(g(n))\} \hookrightarrow D_g
\]

When $g\colon \mathbb{N} \dasharrow \mathbb{N}$ is a T1 oracle, which again
we had better see as a total function $g\colon D_g \to \mathbb{N}$ and
which gives rise to a morphism from a $(\neg\neg)$-closed subset $D_g$
of $\mathbf{N}$ to $\nabla\mathbb{N}$ in $\mathbf{Eff}$, the topology
$j_g$ is now that associated to the subobject
\[
\{n \mid n\in D_g \,\land\, g(n) \in \mathbf{N}\} \hookrightarrow D_g
\]
(keeping in mind that $\mathbf{N}$ is a subobject\footnote{We can
see $\nabla\mathbb{N}$ as the object of $(\neg\neg)$-closed,
subterminal and $(\neg\neg)$-inhabited subobjects of $\mathbf{N}$, in
which case $\mathbf{N}$ consists of those that are inhabited (see
\cref{construction-j-sheafification} for a justification), i.e., are
mapped to true by $\varpi$.} of $\nabla\mathbb{N}$).  This can be a
bit confusing as we have now two “domains of definition” of $g$: the
“effective” one on the left being the one which makes $g$ into a
morphism to $\mathbf{N}$ and the “classical” one on the right being
the one which makes $g$ into a morphism to $\nabla\mathbb{N}$; and the
topology $j_g$ is precisely the one which makes the effective domain
dense in the classical domain.

When $g\colon \mathbb{N} \to \mathbb{N}$ is a T0 oracle, which gives
rise to a morphism $\mathbf{N} \to \nabla\mathbb{N}$ in
$\mathbf{Eff}$, the topology $j_g$ is that associated to the subobject
\[
\{n \mid g(n) \in \mathbf{N}\} \hookrightarrow \mathbf{N}
\]
where again the subobject on the left might be seen as the “effective”
domain of definition of $g$, i.e., the domain of $g$ as a partial
morphism $\mathbf{N} \dasharrow \mathbf{N}$.  If the set-theoretic
$g$ is the characteristic function of a subset $A \subseteq
\mathbb{N}$, which we identify with a $(\neg\neg)$-closed subobject of
$\mathbf{N}$, then our inclusion of subobjects above is $\{(n\in A)
\lor\neg (n\in A)\} \hookrightarrow \mathbf{N}$.  Thus we have made
the connection with \cite[theorem 17.2]{HylandEff}.

Finally, when $\mathscr{G} \in \mathcal{P}(\mathcal{P}(\mathbb{N}))$
is a basic T3 oracle, the topology $j_\mathscr{G}$ is that associated
to the subobject
\[
\{G \in \nabla \mathscr{G} \mid \varpi(G)\}
\hookrightarrow
\nabla \mathscr{G}
\]

\bigbreak

Having discussed Lawvere–Tierney topologies in general, let us now
explain how the $\mathbf{d}_{P,Q}$ and $\mathbf{c}_{P,Q}$ degrees of
\cref{section-decoding-and-coding-degrees} (or their associated
topologies) fit into the picture.

\begin{dfn}\label{definition-x-p-q-object}
If $P,Q\subseteq\mathbb{N}$ are inhabited and disjoint, we denote by
$X_{P,Q}$ the object of the effective topos $\mathbf{Eff}$ given by
the assembly $(\{0,1\}, \; (0 \mapsto P, \; 1 \mapsto Q))$.  (In other
words, the set of realizers of $0$ is $P$ and the set of realizers
of $1$ is $Q$.)
\end{dfn}

Note as a special case that $X_{\{0\},\{1\}} = 2$ is the two-element
set in $\mathbf{Eff}$ (the coproduct of two copies of the terminal
object).

We also denote by $\nabla 2$, as is standard, the “constant” object
$\nabla\{0,1\}$ given by the assembly $(\{0,1\}, \; (0 \mapsto
\mathbb{N}, \; 1 \mapsto \mathbb{N}))$.

\begin{prop}\label{toposic-construction-of-coding-decoding-degrees}
If $P,Q\subseteq\mathbb{N}$ are inhabited and disjoint.  Then the
identity on the underlying set $\{0,1\}$ defines monomorphisms of
assemblies
\[
2 \hookrightarrow X_{P,Q} \hookrightarrow \nabla 2
\]
and the topologies $j_{2 \hookrightarrow X_{P,Q}}$ and $j_{X_{P,Q}
  \hookrightarrow \nabla 2}$ defined
by \cref{smallest-l-t-topology-making-subobject-dense} are the ones
associated respectively to $\mathbf{d}_{P,Q}$ and $\mathbf{c}_{P,Q}$
under the
correspondence from \cref{statement-oracle-topology-correspondence}.
\end{prop}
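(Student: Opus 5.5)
The plan is to first check the monomorphism claims directly on assemblies, and then to identify each topology by rewriting it, via \cref{t3-oracle-to-internal-topology} and ¶\ref{other-oracles-to-internal-topology}, as the topology attached to a dense-making subobject of the oracle's domain.

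\textbf{Monomorphisms.} The identity $\{0,1\}\to\{0,1\}$ is tracked from $2=X_{\{0\},\{1\}}$ to $X_{P,Q}$ by the constant map $i\mapsto$ (a fixed element of $P$ if $i=0$, of $Q$ if $i=1$). This uses that $P,Q$ are inhabited, and the map is computable since it is a finite table. It is tracked from $X_{P,Q}$ to $\nabla 2$ by any program. Injective maps of assemblies are monomorphisms.

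\textbf{The decoding topology.} By ¶\ref{other-oracles-to-internal-topology}, the decoding oracle (a T1 oracle with domain $D=P\cup Q$ and $g(n)=0$ on $P$, $1$ on $Q$) gives the topology associated to $\{n\in D\mid g(n)\in\mathbf{N}\}\hookrightarrow D$. I will exhibit a pullback square relating this inclusion to $2\hookrightarrow X_{P,Q}$. There is a map of assemblies $D\to X_{P,Q}$ (with $D$ the $\neg\neg$-closed subobject of $\mathbf{N}$, realizers of $n$ being $\{n\}$) sending $n\mapsto g(n)$, tracked by the identity, and it is a regular epimorphism. The pullback of $2\hookrightarrow X_{P,Q}$ along it is exactly $\{n\in D\mid g(n)\in \mathbf{N}\}$, where one checks realizers: a realizer of $n$ together with a realizer of $g(n)\in 2$.

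Then I use the general fact, read off from the formula of \cref{smallest-monotonic-map-making-subobject-dense}, that the smallest topology making $A\hookrightarrow X$ dense is invariant under pulling back along an epimorphism $e\colon Y\twoheadrightarrow X$. Indeed $\exists y.\,(e(y)\in A\Rightarrow u)$ equals $\exists x.\,(x\in A\Rightarrow u)$ when $e$ is epi. Hence $j_{2\hookrightarrow X_{P,Q}}$ is the topology of $\mathbf{d}_{P,Q}$.

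\textbf{The coding topology.} Here ¶\ref{other-oracles-to-internal-topology} gives the topology of $\{G\in\nabla\{P,Q\}\mid\varpi(G)\}\hookrightarrow\nabla\{P,Q\}$. Since $P\neq Q$, we have $\nabla\{P,Q\}\cong\nabla 2$ via $P\mapsto0$, $Q\mapsto1$. The subobject $\{G\mid \varpi(G)\}$ has as realizers of $G$ exactly the elements of $G$ (per ¶\ref{preparation-for-t3-oracle-to-internal-topology}), so it is the assembly $(\{0,1\},0\mapsto P,1\mapsto Q)=X_{P,Q}$, included in $\nabla 2$ by the identity. Thus $j_{X_{P,Q}\hookrightarrow\nabla2}$ is literally the topology of $\mathbf{c}_{P,Q}$.

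\textbf{Main obstacle.} The delicate step is the decoding case: correctly identifying the subobject $\{n\in D\mid g(n)\in\mathbf{N}\}$ as a pullback, and justifying the epi-invariance in the internal logic. If the pullback bookkeeping fails, I would fall back on computing both sides of \cref{smallest-monotonic-map-making-subobject-dense} in realizers. For $2\hookrightarrow X_{P,Q}$, a realizer of $\exists x.((x\in 2)\Rightarrow u)$ is essentially an $x$-realizer $r\in P\cup Q$ plus a program sending (the realizer of $x$ in $2$) into $U$. This matches the realizers $\langle n,e\rangle$ of the $f$ attached to the decoding oracle up to computable translation, and since the closure of \cref{smallest-l-t-topology-above-monotone-map} is monotone in $f$, equivalent $f$'s give the same topology.
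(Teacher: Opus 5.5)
Your proposal is correct and follows the paper's proof essentially step by step. Both arguments track the two identity maps in the same way. Both pull back $2\hookrightarrow X_{P,Q}$ along the surjection $D\to X_{P,Q}$ from the domain $D=P\cup Q$ of the decoding oracle, recovering the T1 subobject of paragraph~\ref{other-oracles-to-internal-topology}. And both read $\{G\in\nabla\{P,Q\}\mid\varpi(G)\}\hookrightarrow\nabla\{P,Q\}$ literally as $X_{P,Q}\hookrightarrow\nabla 2$.

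Your epi-invariance check via the formula of \cref{smallest-monotonic-map-making-subobject-dense} is exactly what justifies the paper's transfer of density across its pullback square. You are also right to claim only a (regular) epimorphism there: the paper's aside that this map ``even has a section'' is unnecessary and false in general. A section $X_{P,Q}\to D$ would be tracked by a program constant on $P$ and constant on $Q$, so it exists only when $\mathbf{d}_{P,Q}=\mathbf{0}$.
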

\begin{proof}
We note that all objects under discussion are assemblies.  The arrow
$2 \to X_{P,Q}$ comes from the identity on sets, which is
tracked by a program taking $0$ to some fixed element of $P$ and $1$
to some fixed element of $Q$.  The arrow $X_{P,Q} \to
\nabla 2$ also comes from the identity on sets, which is trivially
tracked.  Both are monomorphisms because the identity is injective.

To make the following description less confusing, we need a notation
to distinguish a subset $E \subseteq \mathbb{N}$ and the
$(\neg\neg)$-closed subobject of $\mathbf{N}$ it defines in the
effective topos (precisely: $\nabla E \cap \mathbf{N}$ where the
intersection is taken as subobjects of $\nabla\mathbb{N}$).  So let us
denote the latter by $\class(E)$.

Now concerning $\mathbf{d}_{P,Q}$: the description given
in ¶\ref{other-oracles-to-internal-topology} for the T1 level ensures
that it is associated with the subobject $(\class(P) \,\cup\, \class(Q))
\hookrightarrow \class(P \,\cup\, Q)$.  So, to be clear, $\class(P \,\cup\,
Q)$ is the assembly with $P \,\cup\, Q$ as underlying set, where each
element of $P \,\cup\, Q$ is realized by itself, whereas $\class(P) \,\cup\,
\class(Q)$ is that with the same underlying set, where each element
$p$ of $P$ is realized by $\langle 0,p\rangle$ and each element $q$ of
$Q$ by $\langle 1,q\rangle$.  Now we have a pullback diagram
\[
\begin{array}{ccc}
(\class(P) \,\cup\, \class(Q)) & \hookrightarrow & \class(P \,\cup\, Q) \\
\downarrow & \square & \downarrow \\
2 & \hookrightarrow & X_{P,Q} \\
\end{array}
\]
where the vertical arrow on the right comes at the level of sets from
taking elements of $P$ to $0$ and elements of $Q$ to $1$ and tracked
by the identity program, and it is surjective (meaning, epimorphic)
and indeed even has a section.  So to say that $\forall x \in
\class(P \,\cup\, Q). \; j(x \in (\class(P) \,\cup\, \class(Q)))$ is the same
as to say $\forall x \in X_{P,Q}. \; j(x \in 2)$.

Concerning $\mathbf{c}_{P,Q}$: the description given
in ¶\ref{other-oracles-to-internal-topology} for basic T3 oracles
ensures that it associated with the subobject $\{G \in \nabla \{P,Q\}
\mid \varpi(G)\} \hookrightarrow \nabla\{P,Q\}$, and the left hand side
is just a fancy name for $X_{P,Q}$ while the right hand side is one
for $\nabla 2$.
\end{proof}

\thingy\label{explanation-explicit-formulas-for-coding-decoding-topologies}
This explains the formulas stated
in ¶\ref{explicit-formulas-for-coding-decoding-topologies}: they are
obtained by applying
\cref{smallest-l-t-topology-making-subobject-dense} to $2
\hookrightarrow X_{P,Q}$ and $X_{P,Q} \hookrightarrow \nabla 2$
respectively.  Indeed, (if we denote by $V$ the set of realizers
of $v:\Omega$) the set of realizers of $\exists x:X_{P,Q}.\; ((x\in 2)
\Rightarrow v)$, or equivalently, of $\exists x:\nabla 2.\; ((x\in
X_{P,Q}) \land ((x\in 2) \Rightarrow v))$ is
$((P\sqcap(\{0\}\Rrightarrow V))\cup(Q\sqcap(\{1\}\Rrightarrow
V)))\Rrightarrow V$, and that the set of realizers of $\exists
x:\nabla 2.\; ((x\in X_{P,Q}) \Rightarrow v)$ is $((P\Rrightarrow
V)\cup(Q\Rrightarrow V))\Rrightarrow V$.

\begin{rmk}
To say that $X_{P_2,Q_2} \subseteq X_{P_1,Q_1}$ as subobjects
of $\nabla 2$ means precisely (keeping in mind that a morphism
$X_{P_2,Q_2} \to X_{P_1,Q_1}$ compatible with the map to $\nabla 2$
needs to be the identity on the underlying set $\{0,1\}$) that there exists
$f\colon\mathbb{N}\dasharrow\mathbb{N}$ partial computable tracking
the identity on $\{0,1\}$, in other words, defined at least on
$P_2\cup Q_2$, and such that $f(P_2) \subseteq P_1$ and $f(Q_2)
\subseteq Q_1$: this is precisely the definition we have given
in \ref{definition-many-to-one-reduction} for many-to-one reduction
$(P_2,Q_2) \mathrel{\preceq_{\mathrm{m}}} (P_1,Q_1)$ of promise
problems.

So the $X_{P,Q}$ can be seen as the incarnation in the effective topos
of the promise problems $(P,Q)$ up to many-to-one reduction, the
easiest one being $2$ (the decidable promise problem) and the
“ultimate” one being $\nabla 2$ (not itself a promise problem, but
see \cref{degenerate-cases-of-decoding-and-coding-degrees}).  The
decoding degree $\mathbf{d}_{P,Q}$ corresponds to the smallest
topology making $2 \hookrightarrow X_{P,Q}$ dense, and the coding
degree $\mathbf{c}_{P,Q}$ corresponds to the smallest topology
making $X_{P,Q} \hookrightarrow \nabla 2$ dense.  Now propositions
\ref{many-to-one-reduction-implies-decoding-oracles-reduction}
and \ref{many-to-one-reduction-implies-coding-oracles-reduction}
become obvious.

(One might, of course, be interested in the “relative-coding-decoding”
degree defined, when $(P_1,Q_1)$ and $(P_2,Q_2)$ are two promise
problems such that $(P_2,Q_2) \mathrel{\preceq_{\mathrm{m}}}
(P_1,Q_1)$, as corresponding to the smallest topology $j_{X_{P_2,Q_2}
  \hookrightarrow X_{P_1,Q_1}}$ making $X_{P_2,Q_2} \hookrightarrow
X_{P_1,Q_1}$ dense.  An adaptation of the proof of
\cref{toposic-construction-of-coding-decoding-degrees} shows that it
corresponds to the oracle which takes an instance of the harder
problem $(P_1,Q_1)$ and returns the answer in the form of an instance
of the easier problem $(P_2,Q_2)$.  But we do not mention it any
further since we do not have much to say about it.)
\end{rmk}

\bigbreak

Having discussed how the $\mathbf{d}_{P,Q}$ and $\mathbf{c}_{P,Q}$
are associated with the $X_{P,Q}$ object,
we now turn to the $\tilde{\mathbf{d}}_A$ and $\tilde{\mathbf{c}}_A$
degrees (or their associated topologies).  To better make sense of the
appearance of $H_0^A,H_1^A$ in \cref{section-co-turing-degrees},
and, specifically, to make the appearance of the object $X_{P,Q}$ in this case
(that is, $X_{H_0^A, H_1^A}$) less ad hoc, as we will do in
\cref{construction-j-assembly-for-turing-degree}, we now
turn to the notion of \textbf{sheafification} for a Lawvere–Tierney topology,
whose construction we first recall.

(Let us also mention that a more general definition of sheafification,
and reducibilities related to our
\cref{remark-general-topos-reducibility}, but applicable in the wider
context of type theory, has been given and investigated
in \cite{AhmanBauer}.)

\begin{prop}\label{construction-j-sheafification}
If $j\colon\Omega\to\Omega$ is a Lawvere–Tierney topology on a topos,
and $X$ is an object of said topos which is $j$-separated (meaning:
$\forall x,y:X.(j(x=y) \Rightarrow (x=y))$), then the sheafification
$\sh_j(X)$ of $X$ for $j$ is the object of subobjects $S$ of $X$ that
satisfy the following three properties:
\begin{enumerate}
\item $S$ is $j$-closed: $\forall x\in X.\, (j(x\in S) \Rightarrow
  (x\in S))$,
\item $S$ is subterminal: $\forall x,y\colon X.\, (x,y\in S \Rightarrow
  x=y)$,
\item $S$ is $j$-inhabited: $j(\exists x\colon X.\, (x\in S))$.
\end{enumerate}
with the unit morphism $X \to \sh_j(X)$ taking $x$ to the subobject
$\{x\}$ (and it is injective).
\end{prop}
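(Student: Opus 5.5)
The plan is to verify directly, in the internal logic, that the object $\tilde X$ of subobjects $S\subseteq X$ satisfying (1)–(3) is a $j$-sheaf, and that the map $\eta\colon X\to\tilde X$, $x\mapsto\{x\}$, is universal among maps from $X$ to $j$-sheaves. The statement is standard folklore (compare \cite[§3.4]{JohnstoneTopos} or \cite[chap. V, §3]{MacLaneMoerdijk}, where $\sh_j(X)$ is built as the closure of the image of the singleton map in $\Omega_j^X$). So the most economical route is to identify $\tilde X$ with that closure and only check the identification by hand.

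\textbf{Step 1: $\eta$ is well defined and injective.} First I check that $\{x\}$ satisfies (1)–(3). Subterminality and inhabitedness are trivial. For $j$-closedness, suppose $j(y\in\{x\})$, i.e.\ $j(y=x)$. Then $j$-separatedness of $X$ gives $y=x$. This is exactly where the separatedness hypothesis enters. Injectivity follows because $\{x\}=\{y\}$ implies $x\in\{y\}$.

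\textbf{Step 2: identification with the standard construction.} In general, $\sh_j(X)$ for separated $X$ is the $j$-closure of the image of $X\to\Omega_j^X$, $x\mapsto\{x\}$ (already $j$-closed by Step 1). So I must show that a $j$-closed $S$ lies in this closure, i.e.\ satisfies $j(\exists x.\,S=\{x\})$, if and only if it satisfies (2) and (3).

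For the forward direction, $S=\{x\}$ clearly gives (2) and (3). Both (2) and (3) are preserved under $j(\cdot)$ when $S$ is $j$-closed. For (3) this uses $jj=j$. For (2), from $j(S=\{z\})$ and $x,y\in S$ we get $j(x=y)$, hence $x=y$ by separatedness.

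For the converse, assume (2) and (3). From $j(\exists x.\,x\in S)$, monotonicity of $j$ reduces the claim to showing $x\in S\Rightarrow S=\{x\}$. By (2) we have $S\subseteq\{x\}$, and $\{x\}\subseteq S$ holds because $x\in S$.

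\textbf{Step 3: conclude.} The closure of a subobject of a sheaf ($\Omega_j^X$ is a $j$-sheaf) is a sheaf. The standard theorem then says this closure, with $\eta$, is the associated sheaf of the separated object $X$. Alternatively, I can prove the universal property directly. Given $f\colon X\to F$ with $F$ a sheaf, send $S$ to the unique $y\in F$ with $j(\exists x\in S.\,f(x)=y)$. Existence uses the sheaf gluing of the $j$-dense partial element $\{y\mid\exists x\in S.\,f(x)=y\}$, and uniqueness uses (2) together with separatedness of $F$.

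The main obstacle I expect is Step 2's careful handling of $j$ under quantifiers in the intuitionistic internal logic, specifically the preservation of (2) under $j$-closure. There I will use $j(p\Rightarrow q)\le(jp\Rightarrow jq)$ and the closedness of the equality on $X$.
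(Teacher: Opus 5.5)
Your proposal is correct and is essentially the paper's own argument. The paper likewise uses that $\Omega_j^X$ is a $j$-sheaf, that conditions (2) and (3) are $j$-closed (so the object they cut out is a closed subobject of a sheaf, hence a sheaf), and that (2) with (3) make $x\mapsto\{x\}$ a $j$-dense inclusion, so that the universal property is just the definition of a sheaf; your Step~2 packages these same two verifications as an identification with the closure of the image of the singleton map, and your Step~1 check that $\{x\}$ is $j$-closed is a detail the paper leaves implicit.
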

\begin{proof}[Explanation]
This is essentially a restatement of the construction of the
associated sheaf object in \cite[chap. V, §3]{MacLaneMoerdijk}.  In
summary: first observe that $\Omega_j := \{u\in\Omega \mid j(u)=u\}$ is a
$j$-sheaf, so this is also the case of the object $(\Omega_j)^X$ of
$j$-closed subobjects of $X$, then observe that the next two
conditions are themselves $j$-closed in $(\Omega_j)^X$ in the sense
that if $S$ is $j$-closed then $j(\phi)$ implies $\phi$ for either one
of the two conditions $\phi$.  But a $j$-closed subobject of a sheaf
is a sheaf, so we have defined a sheaf $X_j$.  The third condition
guarantees that $X \hookrightarrow X_j$ is $j$-dense.  So now if $Y$ is any sheaf,
the morphism $X\to Y$ factors uniquely through $X_j$ by the very
definition of a sheaf.  So $X_j$ is indeed the $j$-sheafification
of $X$.
\end{proof}

(If $X$ were not assumed $j$-separated, we would first need to
quotient by the equivalence relation $j(x=y)$.  We can, however, do
both in one go by replacing the second condition by “$S$ is
$j$-subterminal”, meaning $\forall x,y\colon X.\, (x,y\in S
\Rightarrow j(x=y))$; but then $X \to \sh_j(X)$ is given by $x \mapsto
\{y\in X \mid j(x=y)\}$, and it is, of course, no longer
injective in general.)

\thingy\label{reminder-on-j-false}
In what follows, the reader is advised to keep in mind that,
when $j\colon\Omega\to\Omega$ is a Lawvere–Tierney topology on a
topos, $j \leq (\neg\neg)$ is equivalent to $j(\bot) = \bot$.
(\textit{Proof:} if $j \leq (\neg\neg)$ then in particular $j(\bot)
\leq \neg\neg\bot = \bot$; but conversely, assuming $j(\bot) = \bot$,
if $j(u)$ holds, i.e., $j(u) = \top$, then $\neg(u=\bot)$, which is
exactly to say $\neg\neg u$, and this shows $j(u) \leq \neg\neg u$.)

Furthermore, it is worth keeping in mind that this is also equivalent
to $2$ being $j$-separated (because $j(0=1) \Rightarrow (0=1)$ boils
down to $j(\bot) = \bot$).

\medskip

The following proposition is certainly well known (e.g., it is
implicit in \cite[§2]{vanOostenPittsOperator}), but we could not find
the explicit statement in the literature:

\begin{prop}\label{construction-j-assembly}
If $J \colon \mathcal{P}(\mathbb{N}) \to \mathcal{P}(\mathbb{N})$ is a
Lawvere–Tierney topology on the effective topos, and $J(\varnothing) =
\varnothing$ (which amounts to $J \leq (\neg\neg)$), and if $(X,E)$ is
an assembly, then $(X,E)$ is $J$-separated, and its $J$-sheafification
is $(X,\; J\circ E)$ (meaning the set of realizers of $x$ in the
$J$-sheafification is $J$ of the set of realizers of $x$ in the
original assembly).
\end{prop}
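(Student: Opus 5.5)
The plan is to use the description of sheafification from \cref{construction-j-sheafification}, after first checking that $(X,E)$ is $J$-separated. For separatedness: in an assembly, the truth value $\llbracket x=y\rrbracket$ has realizer set $E(x)\cap E(y)$ (up to equivalence, say $\N$ when $x=y$ and $\varnothing$ otherwise; the canonical choice is $\{\langle a,b\rangle \mid a\in E(x), b\in E(y)\}$ when $x=y$). When $x\neq y$ this truth value is $\bot$, and $J(\varnothing)=\varnothing$ gives $j(\bot)=\bot$. When $x=y$ the inference $j(x=y)\Rightarrow (x=y)$ is realized uniformly: from a $J$-realizer of the pair we obtain, by \texttt{fmap} applied to a projection, an element of $J(E(x))$, and we need an element of $E(x)$. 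Realizing this uniformly in $x$ is not possible directly. The cleaner route is to note that $x=y$ is $\neg\neg$-stable for assemblies in the appropriate sense. We realize $j(x=y)\Rightarrow (x=y)$ by observing that its conclusion may be taken as the equality of $\nabla X$ restricted to $X$: for assemblies, $X\hookrightarrow\nabla X$ is monic and equality on $X$ is the pullback of the $\neg\neg$-closed equality on $\nabla X$. Then $j\le\neg\neg$ (¶\ref{reminder-on-j-false}) gives $j(x=y)\le\neg\neg(x=y)$, and $\neg\neg(x=y)$ has realizers $\N$ or $\varnothing$ and implies $x=y$ in $\nabla X$, hence in $X$ by that pullback. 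This settles separatedness.

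For the sheafification, I would show that $(X,J\circ E)$ is a $J$-sheaf and that the identity map $(X,E)\to(X,J\circ E)$, tracked by \texttt{return}, is a $J$-dense mono. Uniqueness of sheafification then finishes the argument. Denseness: $\llbracket x\in X\rrbracket$ in $(X,J\circ E)$ is realized by $J(E(x))$, and membership of the subobject $(X,E)$ is realized by $E(x)$, so denseness asks for $j$ of the latter to contain the former, which holds literally (up to a uniform recoding by $\langle$pair$\rangle$ conventions).

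Alternatively, and I think more robustly, I would identify $(X,J\circ E)$ with the object $\sh_j(X)$ of \cref{construction-j-sheafification}. Send $x\in X$ to the subobject $S_x=\{y\mid j(x=y)\}$, which is the $j$-closure of $\{x\}$. Conversely, given $S$ satisfying conditions 1--3, note that $S$ is subterminal and $j$-inhabited, and $j\le\neg\neg$, so classically (the underlying set level is $\neg\neg$-true) there is a unique $x$ with $S$ nonempty only at $x$. A realizer of "$S$ is in $\sh_j X$" contains a realizer of $j(\exists y.\,y\in S)$, which lies in $J(E(x)\times\text{(realizers of }x\in S))$; applying \texttt{fmap} of the first projection yields an element of $J(E(x))$. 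Conversely, given $r\in J(E(x))$ we realize condition 3 for $S_x$ by \texttt{fmap} of $a\mapsto\langle a,\text{realizer of }j(x=x)\rangle$, and conditions 1--2 have uniform realizers by idempotence (\texttt{join}) and separatedness. Checking that these maps are mutually inverse isomorphisms is routine: the underlying functions are inverse bijections, and the trackers are uniform programs.

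The main obstacle is the bookkeeping in the second direction. The realizers of the three conditions are uniform in $S$ except for condition 3, and the key point is to show that the realizer set of the whole predicate at $S_x$ is equivalent, uniformly in $x$, to $J(E(x))$. Also, no "spurious" $S$ may exist that is $j$-inhabited without any classical element. This is exactly where $J(\varnothing)=\varnothing$ is used: if $S$ had no element, then $j(\exists y.\,y\in S)$ would have realizer set $J(\varnothing)=\varnothing$, so such $S$ contributes nothing to the underlying set.
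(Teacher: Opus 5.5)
Your proposal is correct, and your second route is essentially the paper's proof. The paper likewise identifies $(X,J\circ E)$ with the explicit $\sh_J(X,E)$ of \cref{construction-j-sheafification} by sending $x$ to the subobject concentrated at $x$ with realizers $J(E(x))$ (your $S_x$), and leaves the verification as routine; your first route, by contrast, is incomplete, since it never checks that $(X,J\circ E)$ is a $J$-sheaf, which is where the work lies.

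For separatedness, contrary to your remark, the direct realizer does work: the realizability clause for $\forall x,y{:}X$ supplies some $a\in E(x)$, and this $a$ realizes $x=y$ when $x=y$ (this is the paper's one-line argument). Your detour through the $\neg\neg$-separatedness of assemblies is valid but implicitly relies on that very fact.
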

\begin{proof}
That $(X,E)$ is $J$-separated means that given realizers of $x$ and
$y$ and a $J$-realizer of $x=y$ (meaning an element of $J(E(x)\cap
E(y))$) we can (uniformly in $x,y$) produce a realizer of $x=y$: the
given realizer of $x$ will do.

We define a morphism $(X,\; J\circ E) \to \sh_J (X,E)$ using the
construction of $\sh_J (X,E)$ given
in \cref{construction-j-sheafification} as follows:
since $\sh_J (X,E)$ is a subobject of the object $\Omega^{(X,E)}$ of
subobjects of $(X,E)$, we will define a morphism $(X,\; J\circ E) \to
\Omega^{(X,E)}$ by adjunction from a morphism $(X,\; J\circ E) \times
(X, E) \to \Omega$, which takes $(x,y) \in X \times X$ to
$\varnothing$ if $x\neq y$, and to $J(E(x))$ if $x=y$.  It is then a
straightforward affair in unrolling the definitions to check that it
injects $(X,\; J\circ E)$ in $\Omega^{(X,E)}$ and that its image is
precisely the $\sh_J (X,E)$ we have constructed
in \ref{construction-j-sheafification}.
\end{proof}

\thingy For example, when $J$ is the $(\neg\neg)$ topology given by
$\varnothing \mapsto \varnothing$ but $U \mapsto \mathbb{N}$ when $U
\subseteq \mathbb{N}$ is inhabited, the above construction (provided
we remove the useless elements with $E(x) = \varnothing$ in $X$) gives
$\sh_{\neg\neg} (X,E) = \nabla X$ as sheafification.

\begin{rmk}\label{remark-on-hom-nat-to-nat}
One consequence of \cref{construction-j-assembly} which may help
clarify matters (and which does not seem to appear explicitly in the
literature) is this: if $g$ is a T3 oracle and $J_g$ the topology it
defines (cf. \cref{definition-oracle-topology-correspondence}), then
in the topos of $J_g$-sheaves (that is, the subtopos of the effective
topos defined by $J_g$), the morphisms $\mathbf{N}_g \to
\mathbf{N}_g$, where $\mathbf{N}_g$ is its natural numbers object, are
precisely given by maps of sets $f \colon \N \to \N$ such that $f
\redTThree g$.  In fact, even more is true: the object
$\mathbf{N}_g^{\mathbf{N}_g}$ in the topos of $J_g$-sheaves is given
by the assembly whose underlying set is the set of such maps, with the
realizers of $e$ being the winning Arthur strategies (i.e., the Arthur
part of a winning Arthur+Nimue strategy) for the reduction.
\end{rmk}

\begin{proof}[Justification of remark]
Indeed, to see this, let us denote by $\mathbf{Eff}_g$ the topos of
$J_g$-sheaves, and $i^* \dashv i_* \colon \mathbf{Eff}_g \to
\mathbf{Eff}$ the embedding; and let us first recall that, by
\cite[proof of lemma A4.1.14]{JohnstoneElephant}, the natural numbers
object $\mathbf{N}_g$ in $\mathbf{Eff}_g$ is the $J_g$-sheafification
$i^* \mathbf{N}$ of the natural numbers object $\mathbf{N}$ in the
original topos $\mathbf{Eff}$.  So by \cref{construction-j-assembly}
this $\mathbf{N}_g$ is given by the assembly $(\mathbb{N}, \; k
\mapsto J_g(\{k\}))$.  Now morphisms $\mathbf{N}_g \to \mathbf{N}_g$
in $\mathbf{Eff}_g$ are the same as in $\mathbf{Eff}$ (because $i_*$
is fully faithful), which are also the same as morphisms $\mathbf{N}
\to \mathbf{N}_g$ in $\mathbf{Eff}$ (because $i^* \dashv i_*$ gives
$\Hom_{\mathbf{Eff}_g}(i^*\mathbf{N}, \mathbf{N}_g) =
\Hom_{\mathbf{Eff}}(\mathbf{N}, i_* \mathbf{N}_g)$, that is
$\Hom_{\mathbf{Eff}_g}(\mathbf{N}_g, \mathbf{N}_g) =
\Hom_{\mathbf{Eff}}(\mathbf{N}, \mathbf{N}_g)$).  Now morphisms in
$\mathbf{Eff}$ from the assembly $(\mathbb{N}, \; m \mapsto \{m\})$
(which represents $\mathbf{N}$) to $(\mathbb{N}, \; k \mapsto
J_g(\{k\}))$ are functions $f\colon \N\to\N$ which are tracked by a
computable function which takes $m \in \mathbb{N}$ to a an Arthur
strategy for declaring $k := f(m)$, and that is just the same as a
winning Arthur strategy for the reduction game of $f$ to $g$.  And the
same reasoning works for the internal Hom-sets, providing the
description of ${\mathbf{N}_g}^{\mathbf{N}_g}$ as an assembly.
\end{proof}

\begin{cor}\label{construction-j-assembly-for-turing-degree}
If $A \subseteq \mathbb{N}$ then the sheafification of $2$ for the
Lawvere–Tierney topology $J_A \colon U \mapsto \{e\in\mathbb{N} \mid
\varphi_e^A(0){\downarrow} \in U\}$ associated with the Turing
oracle $A$ is the object $X_{H_0^A, H_1^A}$ defined
in \ref{definition-x-p-q-object}, where $H_0^A, H_1^A$ are
$J_A(\{0\}), J_A(\{1\})$ as in \ref{definition-codegrees}.
\end{cor}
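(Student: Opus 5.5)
The plan is to derive the corollary directly from \cref{construction-j-assembly}, applied to the assembly $2 = (\{0,1\},\; (0\mapsto\{0\},\; 1\mapsto\{1\}))$ and the topology $J_A$.

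First I will check the hypothesis $J_A(\varnothing)=\varnothing$. This is immediate from the definition: no program $e$ can have $\varphi_e^A(0){\downarrow}\in\varnothing$. By ¶\ref{reminder-on-j-false}, this is the same as $J_A\leq(\neg\neg)$, which also gives that $2$ is $J_A$-separated. Next I will confirm that $J_A$ really is the topology associated with the Turing oracle $A$ in the sense of \cref{definition-oracle-topology-correspondence}. For a T0 oracle, Nimue and Merlin have no choice, so the winning Arthur strategies for producing an element of $U$ are exactly the oracle programs $e$ with $\varphi_e^A(0){\downarrow}\in U$, up to the harmless recoding between strategies and oracle programs noted in ¶\ref{simplification-of-reducibility}. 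Since Lawvere–Tierney topologies are only considered up to $\equiv$, it suffices to work with the representative $J_A$ written in the statement.

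I will then apply \cref{construction-j-assembly}. It states that $\sh_{J_A}(2)$ is the assembly $(\{0,1\},\; J_A\circ E)$, where $E(0)=\{0\}$ and $E(1)=\{1\}$. So the realizers of $i$ are
\[
J_A(\{i\})=\{e\mid\varphi_e^A(0){\downarrow}=i\}=H_i^A .
\]
Since $H_0^A$ and $H_1^A$ are inhabited and disjoint (\cref{definition-codegrees}), this assembly is exactly $X_{H_0^A,H_1^A}$ of \cref{definition-x-p-q-object}.

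The only delicate point is the representative issue. The sheafification depends only on the equivalence class of $J$, but the explicit realizer sets depend on the chosen representative. A different representative $J'\equiv J_A$ would produce an isomorphic assembly $(\{0,1\}, J'\circ E)$, because the uniform programs witnessing $J'(U)\Rrightarrow J_A(U)$ and the converse track the identity in both directions. So the identification with $X_{H_0^A,H_1^A}$ holds up to isomorphism, which is all the statement claims. I expect this to need only a sentence, so there is no real obstacle beyond invoking \cref{construction-j-assembly}.
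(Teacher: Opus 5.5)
Your proposal is correct and matches the paper's proof, which simply observes that the corollary is the special case of \cref{construction-j-assembly} applied to the assembly $2 = (\{0,1\},\, i \mapsto \{i\})$, so that the realizers of $i$ become $J_A(\{i\}) = H_i^A$. Your additional checks, namely $J_A(\varnothing)=\varnothing$ and the invariance up to isomorphism under a change of representative of $J_A$, are sound bookkeeping that the paper leaves implicit.
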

\begin{proof}
This is just a special case of \cref{construction-j-assembly}.
\end{proof}

\begin{cor}\label{codegree-construction-from-sheafification}
If $A \subseteq \mathbb{N}$, then the degrees $\tilde{\mathbf{d}}_A$
and $\tilde{\mathbf{c}}_A$ can be defined as follows: start with the
Lawvere–Tierney topology $J_A$ associated with the Turing oracle $A$,
consider the inclusions
\[
2 \hookrightarrow \sh_{J_A}(2) \hookrightarrow \nabla 2
\]
where $\sh_{J_A}(2)$ is the sheafification of $2$ for this topology
(and $\nabla 2$ for the $(\neg\neg)$ topology).  Then
$\tilde{\mathbf{d}}_A$ (which turns out to be
just $[A]_{\mathrm{T0}}$) and $\tilde{\mathbf{c}}_A$ are the
degrees associated to the topologies defined
by \cref{smallest-l-t-topology-making-subobject-dense} from the
subobjects $2 \hookrightarrow \sh_{J_A}(2)$ and $\sh_{J_A}(2)
\hookrightarrow \nabla 2$ respectively.
\end{cor}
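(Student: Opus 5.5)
The plan is to chain the three identifications already established, with no new computation. First, \cref{construction-j-assembly-for-turing-degree} identifies $\sh_{J_A}(2)$ with the assembly $X_{H_0^A,H_1^A}$. To use it I will check the hypothesis of \cref{construction-j-assembly}, namely $J_A(\varnothing)=\varnothing$. This is immediate: no program can halt with a value in the empty set. By ¶\ref{reminder-on-j-false}, it also ensures that $2$ is $J_A$-separated.

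Second, I will check that under this identification the inclusions $2 \hookrightarrow \sh_{J_A}(2) \hookrightarrow \nabla 2$ agree, as subobjects, with the monomorphisms $2 \hookrightarrow X_{H_0^A,H_1^A} \hookrightarrow \nabla 2$ of \cref{toposic-construction-of-coding-decoding-degrees}. For the first arrow, the unit $2 \to \sh_{J_A}(2)$ is the identity on the underlying set $\{0,1\}$. It is tracked by $\texttt{return}$, which sends $i$ to a program outputting $i$, i.e.\ an element of $J_A(\{i\}) = H_i^A$. For the second arrow, $\sh_{J_A}(2) \to \nabla 2$ should be the comparison map into the $(\neg\neg)$-sheafification, which exists because $J_A \leq \neg\neg$. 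By \cref{construction-j-assembly} applied to $\neg\neg$, that sheafification is $\nabla 2$, and the comparison map is again the identity on $\{0,1\}$, trivially tracked.

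These are the same set-level maps as in \cref{toposic-construction-of-coding-decoding-degrees}. Subobjects of assemblies given by injections on underlying sets are determined by the underlying map up to isomorphism, so the two pairs of subobjects coincide. \Cref{toposic-construction-of-coding-decoding-degrees} then says that the topologies from \cref{smallest-l-t-topology-making-subobject-dense} correspond to $\mathbf{d}_{H_0^A,H_1^A} = \tilde{\mathbf{d}}_A$ and $\mathbf{c}_{H_0^A,H_1^A} = \tilde{\mathbf{c}}_A$. The parenthetical claim $\tilde{\mathbf{d}}_A = [A]_\TZero$ is \cref{decoding-relative-halting-is-turing-degree}.

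The main obstacle is the second step, in particular the arrow $\sh_{J_A}(2)\hookrightarrow\nabla 2$. Read naively, the statement only gives two arbitrary objects, so I must make precise that this arrow is the canonical comparison map. I must also confirm that the isomorphism $\sh_{J_A}(2)\cong X_{H_0^A,H_1^A}$ from \cref{construction-j-assembly} respects both inclusions. I expect this to reduce to the explicit description in the proof of \cref{construction-j-assembly}, where $x$ corresponds to the singleton-like subobject with realizers $J(E(x))$. That description makes both arrows the identity on $\{0,1\}$.
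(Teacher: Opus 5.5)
Your proposal is correct and follows the paper's proof, which simply combines \cref{construction-j-assembly-for-turing-degree} (identifying $\sh_{J_A}(2)$ with $X_{H_0^A,H_1^A}$) with \cref{toposic-construction-of-coding-decoding-degrees}. You also make explicit what the paper leaves implicit: that $J_A(\varnothing)=\varnothing$, and that the unit and comparison maps are the identity on $\{0,1\}$, hence equal to the monomorphisms $2 \hookrightarrow X_{H_0^A,H_1^A} \hookrightarrow \nabla 2$ because morphisms of assemblies are determined by their underlying functions.
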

\begin{proof}
This follows immediately from
\cref{toposic-construction-of-coding-decoding-degrees}
and \cref{construction-j-assembly-for-turing-degree}.
\end{proof}

\thingy What makes the above corollary interesting is that this
describes our “co-Turing degree” construction $\coT\colon
[A]_{\mathrm{T0}} \mapsto \tilde{\mathbf{c}}_A$ as an instance of
a construction that makes sense for any Lawvere–Tierney topology $j
\leq (\neg\neg)$ in any topos: construct the subobject $\sh_j(2)
\hookrightarrow \sh_{\neg\neg}(2)$ and then consider the smallest
topology $k$ (per \cref{smallest-l-t-topology-making-subobject-dense})
that makes this $k$-dense.  But let us first explain why $\sh_j(2)
\hookrightarrow \sh_{\neg\neg}(2)$ is injective in general and how it
can be seen fairly explicitly.

\begin{prop}\label{description-j-sheafification-of-2}
In any topos, we can identify the sheafification $\sh_{\neg\neg}(2)$
of $2$ for the $(\neg\neg)$ topology with the object $\{u \in \Omega \mid
\neg\neg u \Rightarrow u\}$ of $(\neg\neg)$-stable truth values.

Furthermore, once this identification is performed, if $j \leq
(\neg\neg)$ is a Lawvere–Tierney topology on a topos, then the
morphism $\sh_j(2) \to \sh_{\neg\neg}(2)$ (coming from the fact that
since $j \leq (\neg\neg)$, the $(\neg\neg)$-sheaf $\sh_{\neg\neg}(2)$
is \textit{a fortiori} a $j$-sheaf) is the injection of the subobject of
the object $\{u \in \sh_{\neg\neg}(2) \mid j(u\lor\neg u)\}$
consisting of those $(\neg\neg)$-stable truth values $u$ such that
$j(u\lor\neg u)$ holds.
\end{prop}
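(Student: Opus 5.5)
For the first claim, I would use \cref{construction-j-sheafification}. Since $2$ is $\neg\neg$-separated (by ¶\ref{reminder-on-j-false}, as $\neg\neg\bot=\bot$), $\sh_{\neg\neg}(2)$ is the object of subobjects $S\subseteq 2$ that are $\neg\neg$-closed, subterminal and $\neg\neg$-inhabited. A subobject of $2=1+1$ is a pair $(a,b)$ of truth values. Subterminality means $\neg(a\land b)$. Being $\neg\neg$-closed means $\neg\neg a\Rightarrow a$ and $\neg\neg b\Rightarrow b$. Being $\neg\neg$-inhabited means $\neg\neg(a\lor b)$.

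I would send such a pair to $u:=a$ and send a stable $u$ to $(u,\neg u)$. To check these are inverse, note that from $\neg(a\land b)$ we get $b\Rightarrow\neg a$. From $\neg\neg(a\lor b)$ we get $\neg a\Rightarrow\neg\neg b$, and $\neg\neg$-closedness of $b$ then gives $\neg a\Rightarrow b$. Hence $b=\neg a$. Conversely, $(u,\neg u)$ with $u$ stable satisfies all three conditions, because $\neg u$ is always stable and $\neg\neg(u\lor\neg u)$ holds intuitionistically. This identifies $\sh_{\neg\neg}(2)$ with $\{u\mid\neg\neg u\Rightarrow u\}$.

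For the second claim, I would apply the same description to $j$. Here $2$ is $j$-separated because $j(\bot)=\bot$. So $\sh_j(2)$ is the object of pairs $(a,b)$ that are $j$-closed, satisfy $\neg(a\land b)$, and satisfy $j(a\lor b)$. The comparison map to $\sh_{\neg\neg}(2)$ should be the unique extension of the unit $2\to\sh_{\neg\neg}(2)$. I would check that it is given by $S\mapsto S$: a $j$-sheaf-level subobject satisfying the $j$-conditions also satisfies the $\neg\neg$-conditions, using $j\le\neg\neg$ (so $j(\phi)\Rightarrow\neg\neg\phi$). The point needing care is that $j$-closed need not imply $\neg\neg$-closed. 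So I expect the actual map to be $(a,b)\mapsto(\neg\neg a,\neg\neg b)$, which is the $\neg\neg$-closure.

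Instead of closure, I would argue directly that in a $j$-sheaf pair we already have $b=\neg a$, and hence $a$ is $\neg\neg$-stable. From $j(a\lor b)$ and $b\Rightarrow\neg a$ we get $j(a\lor\neg a)$. Given $\neg\neg a$, we get $j(a\lor\neg a)\Rightarrow j(a)$, since under $\neg\neg a$ the implication $a\lor\neg a\Rightarrow a$ holds and $j$ is monotone internally. Then $j$-closedness gives $a$. So $a$ is stable. Symmetrically, $\neg a\Rightarrow j(b)\Rightarrow b$, so $b=\neg a$.

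Thus $\sh_j(2)$ is identified with $\{u \text{ stable}\mid j(u\lor\neg u)\}$. The remaining closedness requirements hold automatically: $u$ is $j$-closed because $j(u)\le\neg\neg u=u$. The inclusion into $\sh_{\neg\neg}(2)$ is then literally the identity on $u$, so it is injective. Commutation with the units ($0\mapsto\top$, $1\mapsto\bot$) is immediate, and uniqueness of extension to the sheafification shows that this is the canonical morphism.

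The main obstacle is the bookkeeping of which closure conditions are automatic. The key step to get right is the argument that $j(a\lor b)$ together with $j$-closedness forces $\neg\neg$-stability of $a$.
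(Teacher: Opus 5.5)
Your proposal is correct and takes essentially the paper's route. You describe $\sh_j(2)$ via \cref{construction-j-sheafification} as pairs $(p,q)$ of $j$-closed truth values with $\neg(p\land q)$ and $j(p\lor q)$, show these force $q=\neg p$ with $p$ $(\neg\neg)$-stable, and conclude that only $j(u\lor\neg u)$ survives; the paper obtains the $\neg\neg$ case as the instance $j=\neg\neg$ rather than separately, and your uniqueness-of-extension check that the inclusion is the canonical map is a more careful version of the paper's parenthetical justification.
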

\begin{proof}
By \cref{construction-j-sheafification}, and using the fact that $j
\leq (\neg\neg)$ implies that $2$ is $j$-separated
(cf. ¶\ref{reminder-on-j-false}), if we identify a subobject
$S\hookrightarrow 2$ with the pair $(p,q)$ of truth values $p :=
\llbracket 0\in S\rrbracket$ and $q := \llbracket 1\in S\rrbracket $
respectively, we can see $\sh_j(2)$ as the object of
pairs $(p,q)\in\Omega^2$ such that (1) $j(p)=p$ and $j(q)=q$,
(2) $\neg(p\land q)$, and (3) $j(p\lor q)$.  Working internally for
such a pair $(p,q)$: if $q$ holds then $\neg p$ holds by (2); but
conversely, if $\neg p$ holds then so does $j(\neg p)$ and by (3) we
have $j(q)$, which by (1) gives us $q$; so we have shown the
equivalence of $q$ with $\neg p$, that is, $q = (\neg p)$.  And of
course $p = (\neg q)$ as everything is symmetrical.  So we can just
keep one of the truth values, rename it to $u$, which is required to
satisfy $\neg\neg u = u$.  The conditions (1) $j(u) = u$ and $j(\neg
u) = \neg u$ are then automatic since $u$ and $\neg u$ are
$(\neg\neg)$-stable so they are $j$-stable since $j \leq (\neg\neg)$;
the condition (2) $\neg(u\land\neg u)$ is also automatic.  So the only
surviving condition is (3) $j(u\lor\neg u)$, and we have performed the
required identification.  (And the injection $\sh_j(2) \to
\sh_{\neg\neg}(2)$ we got is the natural one coming from $j \leq
(\neg\neg)$ because we started from the description of them as objects
of subterminal objects of $2$.)
\end{proof}

To put it more succinctly, $\sh_j(2) = \{u \in \sh_{\neg\neg}(2) \mid
j(u\lor\neg u)\}$ is the $j$-closure of $2 = \{u\in\sh_{\neg\neg}(2) \mid
u\lor\neg u\}$ in $\sh_{\neg\neg}(2) = \{u \in \Omega \mid \neg\neg u
\Rightarrow u\}$ (this actually constitutes a proof of the proposition,
if we grant that last identification).

\begin{dfn}\label{codegree-construction-for-topologies}
If $j \leq (\neg\neg)$ is a Lawvere–Tierney topology on a topos, we
define another topology $\coT(j)$ as the smallest one (as
per \cref{smallest-l-t-topology-making-subobject-dense}) which makes
the inclusion $\sh_j(2) \hookrightarrow \sh_{\neg\neg}(2)$ dense,
where $\sh_j(2)$ is the $j$-sheafification of $2$ that we have just
described.
\end{dfn}

More generally, if $j \leq \ell \leq (\neg\neg)$, we can define $\coT(j\leq \ell)$
as the smallest topology making the inclusion $\sh_j(2)
\hookrightarrow \sh_\ell(2)$ dense.  (We have will little intelligent to
say about this operation, however.)

\thingy Thus, \cref{codegree-construction-from-sheafification}
expresses the fact that this $j \mapsto \coT(j)$ construction for Lawvere–Tierney
topologies on the effective topos is compatible, under the
correspondence \ref{definition-oracle-topology-correspondence}, with
the $\coT$ construction from \cref{reverse-embedding-theorem} (and even
$\coT_1$ and $\coT_2$ from remarks \ref{remark-extension-to-t1-degrees}
and \ref{remark-extension-to-t2-degrees}).

The \emph{easy} implication $(\mathbf{b} \leq \mathbf{a}) \;
\Rightarrow \; (\coT(\mathbf{a}) \leq \coT(\mathbf{b}))$ (seen
in \cref{turing-reduction-implies-codegree-reduction}) is an instance
of the completely general phenomenon that $k \leq j$ implies $\coT(j)
\leq \coT(k)$ with the $\coT$ defined
in \ref{codegree-construction-for-topologies}: indeed, $k \leq j
\leq (\neg\neg)$ gives a decomposition $\sh_k(2) \hookrightarrow
\sh_j(2) \hookrightarrow \sh_{\neg\neg}(2)$, and clearly the smallest
topology making $\sh_k(2) \hookrightarrow \sh_{\neg\neg}(2)$ dense
must also make $\sh_j(2) \hookrightarrow \sh_{\neg\neg}(2)$ dense.
The \emph{hard} implication $(\coT(\mathbf{a}) \leq \coT(\mathbf{b}))
\; \Rightarrow \; (\mathbf{b} \leq \mathbf{a})$ (seen
in \cref{codegree-reduction-implies-turing-reduction}, and whose proof
traces back to \cref{main-strategy-result}) \emph{cannot} be part of a
general phenomenon because of the following counterexamples:

\begin{exm}\label{example-wlem-sheaf-topos}
Let $X$ be a Hausdorff, extremally disconnected but non discrete
topological space, e.g., $\beta\mathbb{N}$ (the Stone–Čech
compactification of a countably infinite discrete space),
and $\Sh(X)$ the topos of sheaves of sets on $X$.  Then the
weak law of excluded middle WLEM ($\forall u.(\neg u\lor\neg\neg
u)$) holds in $\Sh(X)$, because “extremally disconnected” means the
closure $\operatorname{cl}(U)$ of any open set $U$ is open, hence
clopen, so it is $\neg\neg U =
\operatorname{int}(\operatorname{cl}(U))$ and it is also the
set-theoretic complement of $\neg U = \operatorname{int}(X \setminus
U)$.  On the other hand, the law of excluded middle LEM
($\forall u.(u\lor\neg u)$) does not hold
in $\Sh(X)$, because the complement $X\setminus\{x\}$ of a
non-isolated point $x$ is a dense open set that is not $X$.

Now by the description in \cref{description-j-sheafification-of-2}
applied to $j$ being the identity, WLEM is exactly equivalent to the
assertion that $2 \hookrightarrow \sh_{\neg\neg}(2)$ be an
isomorphism.  So in $\Sh(X)$, the two topologies $\id$ and
$(\neg\neg)$ have the same value under $\coT$ (namely $\id$), despite
being distinct.

The same example also shows that
the smallest topology “$\coT(\id \leq j)$” making $2 \hookrightarrow
\sh_j(2)$ dense need not always be $j$ itself, contra what
\cref{decoding-relative-halting-is-turing-degree} tells us about the
specific case of Turing degrees.  (We have already encountered this
phenomenon in the final sentence of \cref{remark-extension-to-t2-degrees}.)

We note that the specific case of the smallest topology “$\coT(\id
\leq (\neg\neg))$” making $2 \hookrightarrow \sh_{\neg\neg}(2)$ dense
(which, as we have just pointed out, is equivalent to WLEM, or
equivalently, the De Morgan laws), has been studied under the name
“De Morgan topology” in \cite{CaramelloDM}.
\end{exm}

\begin{exm}\label{example-codegree-for-error-one-third}
Let $\mathtt{Error}_{1/m}$ denote the basic T3 oracle given by the set
$\{\{1,\ldots,m\}\setminus\{i\} \mid 1\leq i\leq m\}$ (in
$\mathcal{P}(\mathcal{P}(\mathbb{N}))$); for $m=2$ this is our $\Om$,
but let us now consider $m=3$.  In other words, this is an oracle
which, out of $3$ choices, rules out one “bad” choice.  (See
\cite[§4.1]{KiharaLT} for more on this degree and its properties.)  It
is not difficult to see that if $f\colon\N\dasharrow\N$ is a T1 oracle
and $f \redTThree \mathtt{Error}_{1/3}$ then in fact $f$ is
computable.  (Essentially, a reduction strategy of $f$ to
$\mathtt{Error}_{1/3}$ defines a $3$-branching computable tree in
which, at each branching node, $2$ of the $3$ branches lead to a
correct computation, which is always the same as $f$ is assumed
deterministic, and this makes it possible to compute $f$ by majority.)

But this means that if we define $J := J_{\mathtt{Error}_{1/3}}$ to be
the Lawvere–Tierney topology associated to the degree
$\mathtt{Error}_{1/3}$, and if we let $P := J(\{0\})$ and $Q :=
J(\{1\})$, then the function taking the value $0$ on $P$ and $1$
on $Q$ (and undefined elsewhere) is, in fact, computable, that is,
$\mathbf{d}_{P,Q} = \mathbf{0}$, and consequently $\mathbf{c}_{P,Q} =
\Om$ (by \cref{coding-and-decoding-give-omniscience}).

In the language of this section, this means that the sheafification of
$2$ for the topology associated to the $\mathtt{Error}_{1/3}$ degree
is $2$ itself (it is already a sheaf), so, as in the previous example,
the $\coT$ construction gives the same result on this topology as on
the bottom ($\id$) topology.

In the language of remarks \ref{remark-extension-to-t1-degrees}
and \ref{remark-extension-to-t2-degrees}, this example means that if
we extend $\coT$ to the T3 degrees as per our general construction,
i.e., as the map $\coT_3 \colon \mathscr{D}_\TThree \to
\mathscr{D}_\TThree$, given explicitly by $\mathbf{c}_{J(\{0\}),
  J(\{1\})}$, then we have $\coT_3(\mathtt{Error}_{1/3}) = \Om$ even
though $\mathtt{Error}_{1/3} > \mathbf{0}$, and we have lost
injectivity.
\end{exm}

\thingy\label{explicit-formula-for-cotopology} Applying the formula
of \cref{smallest-l-t-topology-making-subobject-dense} to the
description of \cref{description-j-sheafification-of-2}, we find that
for $j \leq (\neg\neg)$, our $\coT(j)$ is given by the explicit
formula
\[
u \; \mapsto \; \forall v:\Omega.\;((((\exists w:\Omega.\; ((\neg\neg w \Rightarrow w) \land (j(w\lor\neg w) \Rightarrow v)))\Rightarrow v)\land (u\Rightarrow v))\Rightarrow v)
\]
While this formula does not seem particularly inspiring, it is
interesting to note that it makes sense in considerable generality
(for a nucleus on a frame), so it would be worth investigating what
this general construction means.

\section{Questions and further discussion}\label{section-questions}

\begin{ques}
Can we define $\coT(\mathbf{a})$ “intrinsically” from the T0 degree
$\mathbf{a}$?  Is it the smallest T3 degree $\mathbf{c}$ such that
$\mathbf{a} \vee \mathbf{c} = \Om$?  (When does the latter exist?)
\end{ques}

While ¶\ref{explicit-formula-for-cotopology} offers an answer of sorts
to the first part of this question, we do not deem it to be
satisfactory in that it is not purely in reference to the order
structure of the degrees themselves.  At any rate, it seems that
$\coT$ has a kind of “dual” flavor compared to the Heyting operation
or negation (for which cf. ¶\ref{digression-heyting-operation}):
whereas the negation would be the largest $\mathbf{d}$ such that
$\mathbf{a} \wedge \mathbf{d} = \mathbf{0}$ (and this exists but is
just $\mathbf{0}$ for $\mathbf{a}$ a Turing degree as we explained at
the end of ¶\ref{digression-heyting-operation}), here we are more
inclined to search along the lines of the smallest $\mathbf{c}$ such
that $\mathbf{a} \vee \mathbf{c} = \Om$ (something which generally
does not exist).  Still, we suspect that the answer to the second
question is negative; however, this does not preclude a different
answer along similar lines.

\begin{ques}\label{question-codegrees-for-t1-degrees}
Does the construction in \cref{remark-extension-to-t1-degrees} of a map
$\coT_1 \colon \mathscr{D}_{\mathrm{T1}}^\op \to
\mathscr{D}_{\mathrm{T3}}$ satisfy the analogue
of \cref{reverse-embedding-theorem}?  In other words, is it injective
and order-reflecting?  What about $\coT_2$ defined
in \cref{remark-extension-to-t2-degrees}?
\end{ques}

(We have pointed out in \cref{example-codegree-for-error-one-third}
that the “obvious” extension $\coT_3$ is \emph{not} injective.)

\begin{ques}\label{question-parallelizable-t1-degrees}
Is there a reasonable notion of “parallelizable” T1 degree which would
make the proof of \cref{main-strategy-result} work, giving the
following conclusion: if $\mathbf{a}$ is parallelizable and
$\mathbf{a} \vee \mathbf{c}_{P,Q} = \Om$ then $\mathbf{d}_{P\times Q,
  \, Q\times P} \leq \mathbf{a}$?
\end{ques}

This is also related to the remark made in \ref{remark-on-t1-meet},
which strongly suggests, for example, that the T1 meet of two
incomparable Turing degrees should \emph{not} be parallelizable.  The
following questions are also possibly related.

\begin{ques}\label{question-computability-of-t1-meet}
Does \cref{computability-of-t1-meet} generalize to two T1 degrees?  In
other words, does $\mathbf{f} \mathrel{\wedge_{\mathrm{T1}}}
\mathbf{g} = \mathbf{0}$ imply $\mathbf{f} = \mathbf{0}$ or
$\mathbf{g} = \mathbf{0}$ when $\mathbf{f},\mathbf{g}$ are two T1
degrees?
\end{ques}

\begin{ques}\label{question-t2-versus-t1-meet}
When do we have $\mathbf{f} \mathrel{\wedge_{\mathrm{T1}}} \mathbf{g}
= \mathbf{f} \mathrel{\wedge_{\mathrm{T2}}} \mathbf{g}$ for T1
degrees?
\end{ques}

(We have seen in \cref{remark-on-t2-versus-t1-meet} that this is not
always the case.)

\begin{ques}
What is the order on the $\coT(\mathbf{a})
\mathbin{\wedge_{\mathrm{T3}}} \mathbf{b}$, where
$\mathbf{a},\mathbf{b}$ range over the Turing degrees?  What about
that on the $\coT(\mathbf{a}) \vee \mathbf{b}$?
\end{ques}

(The above question is probably not too hard.)

\begin{ques}
Is $\mathbf{c}_{P,Q}$ a function of $\mathbf{d}_{P\times Q, \, Q\times
  P}$?
\end{ques}

(We have seen in \cref{remark-that-c-p-q-is-not-a-function-of-d-p-q}
that $\mathbf{c}_{P,Q}$ is not a function of $\mathbf{d}_{P,Q}$.)

\begin{ques}
Does $\mathbf{c}_{P,Q} \geq \mathbf{b}$, for $\mathbf{b}$ a Turing
degree, imply that either $\mathbf{c}_{P,Q} = \Om$ or $\mathbf{b} =
\mathbf{0}$?
\end{ques}

(We have seen in \cref{comparison-of-codegree-with-degree} that this is
true for the $\tilde{\mathbf{c}}_A$.)

\bigskip

We have seen in \cref{comparison-of-codegree-with-degree} how to
compare the co-Turing degrees with the Turing degrees: the following
questions ask whether we can compare the co-Turing degrees with other
important T3 degrees.

\begin{ques}\label{comparison-of-codegree-with-error-degrees}
Does $\coT(\mathbf{a}) \geq \mathtt{Error}_{1/m}$ imply $\mathbf{a} =
\mathbf{0}$?  (We suspect that the answer is positive.)
\end{ques}

Here, $\mathtt{Error}_{1/m}$ denotes the basic T3 degree defined by
the set $\{\{1,\ldots,m\}\setminus\{i\} \mid 1\leq i\leq m\}$ (in
$\mathcal{P}(\mathcal{P}(\mathbb{N}))$); for $m=2$ this is our $\Om$.
In other words, the reduction game here is one where Merlin secretly
chooses an element $1\leq i\leq m$ and Arthur's goal is to
\emph{avoid} that $i$ by getting help from Nimue, whose “no”/“yes”
moves are encrypted by Merlin to elements of $H_0^A$ or $H_1^A$.

(We also do not know whether $\coT(\mathbf{a}) \geq \mathtt{LLPO}$,
where $\mathtt{LLPO}$ was defined in ¶\ref{digression-separating-degree}, and, by
\cite[prop. 4.2]{KiharaLT}, satisfies $\mathtt{LLPO} \leq
\mathtt{Error}_{1/3}$.)

Note that we do have $\coT(\mathbf{a}) \geq
\mathtt{Error}_{1/\mathbb{N}}$ for all $\mathbf{a} \in
\mathscr{D}_{\mathrm{T0}}$, where $\mathtt{Error}_{1/\mathbb{N}}$
denotes the basic T3 degree defined by the set
$\{\mathbb{N}\setminus\{i\} \mid i\in\mathbb{N}\}$; this is a general
fact (\cite[prop. 5.5(ii)]{LeeVanOosten}), but we can see it
very explicitly: Merlin secretly chooses an element $i\in\mathbb{N}$,
then Nimue simply plays $0$ or $1$ according as $i$ is \emph{not} in
$H_0^A$ or $H_1^A$, forcing Merlin to give Arthur an element $\neq i$
which Arthur can then declare.

In the other direction: we have $\coT(\mathbf{a}) \not\leq
\mathtt{Error}_{1/m}$ for any $m\geq 3$
by \cite[prop. 4.17]{LeeVanOosten} (applied to $n=2$).

\begin{prop}\label{comparison-of-codegree-with-pitts-operator}
For all Turing degrees $\mathbf{a} > \mathbf{0}$, the (T3) degrees
$\coT(\mathbf{a})$ and $\mathtt{Cofinite}$ are incomparable.  Here,
$\mathtt{Cofinite}$ denotes the basic T3 degree (first considered
in \cite[example 5.8]{PittsThesis}) defined by the set of cofinite
subsets of $\mathbb{N}$.
\end{prop}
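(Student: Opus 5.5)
Both non-comparabilities follow quickly from results already quoted: one direction by a direct ``Merlin can make both plays identical'' argument, the other by combining the van Oosten–Pitts computation with \cref{comparison-of-codegree-with-degree}(i).

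\emph{First, $\coT(\mathbf{a}) \not\leq \mathtt{Cofinite}$.} This holds for every $\mathbf{a}$, and in fact for every coding degree $\mathbf{c}_{P,Q}$. Suppose there were a winning Arthur+Nimue strategy in the reduction game of $\{H_0^A, H_1^A\}$ to $\mathtt{Cofinite}$. Fix one of the two possible initial configurations, with secret part $H_0^A$, and the other, with secret part $H_1^A$; Arthur sees nothing in either case.

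We build, by induction on the number of oracle queries, a single sequence of Merlin moves that is legal in both plays simultaneously. As long as the two plays so far are identical (same Merlin replies), Arthur's computable strategy makes the same move in both. If that move is a query, Nimue plays two cofinite sets $G_0$ and $G_1$, one in each play. Their intersection $G_0\cap G_1$ is cofinite, hence inhabited, so Merlin replies with the same element of $G_0 \cap G_1$ in both plays.

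Because the strategy is winning, both plays must end in finite time. They end identically, with Arthur declaring one value $e$. Winning in both plays would require $e \in H_0^A \cap H_1^A = \varnothing$, a contradiction. The same argument shows $\Om \not\leq \mathtt{Cofinite}$, which is consistent since $\coT(\mathbf{0}) = \Om$.

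\emph{Second, $\mathtt{Cofinite} \not\leq \coT(\mathbf{a})$ when $\mathbf{a} > \mathbf{0}$.} Here I would invoke \cite[theorem 2.2]{vanOostenPittsOperator}, recalled in the introduction. It says that every hyperarithmetical function $\N\to\N$ is T3-reducible to $\mathtt{Cofinite}$. In particular, the characteristic function of the halting set gives $\mathbf{0}' \leq \mathtt{Cofinite}$ as T3 degrees. If we had $\mathtt{Cofinite} \leq \coT(\mathbf{a})$, we would get $\coT(\mathbf{a}) \geq \mathbf{0}'$ with $\mathbf{0}'$ a nonzero Turing degree. \Cref{comparison-of-codegree-with-degree}(i) would then force $\mathbf{a} = \mathbf{0}$, contradicting the hypothesis.

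\emph{Main obstacle.} The only delicate point is the bookkeeping in the first argument. Arthur's strategy depends only on Merlin's replies, so identical replies give identical Arthur moves. Nimue's moves may differ between the two plays, but Merlin only needs a common element of her two sets, and the intersection of two cofinite sets is inhabited. The induction needs no more care than that. The second direction is just a citation combined with the main comparison theorem.
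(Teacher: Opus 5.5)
Your proof is correct. The half $\mathtt{Cofinite} \not\leq \coT(\mathbf{a})$ is exactly the paper's argument: $\mathbf{0}' \leq \mathtt{Cofinite}$ by van Oosten--Pitts, combined with \cref{comparison-of-codegree-with-degree}(i).

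The other half, $\coT(\mathbf{a}) \not\leq \mathtt{Cofinite}$, is where you differ. The paper just cites \cite[prop.~4.17]{LeeVanOosten} with $n=2$. You instead prove it directly. Merlin answers every query with a common element of Nimue's two cofinite sets, using the same public part $m$ in both plays, since Arthur does see $m$. Arthur then cannot tell the challenge $H_0^A$ from $H_1^A$. He must declare the same value in both plays, and that value would have to lie in $H_0^A \cap H_1^A = \varnothing$.

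In effect you give a self-contained proof of the special case of the cited proposition that is needed here. Your argument uses only two facts: $P,Q$ are disjoint, and any two members of the basic oracle have inhabited intersection. So it shows $\mathbf{c}_{P,Q} \not\leq \mathscr{G}$ for every such $\mathscr{G}$. This includes $\mathtt{Error}_{1/m}$ for $m \geq 3$, which the paper also handles with the same citation.

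Your argument also never uses that Arthur's strategy is computable. This direction is purely information-theoretic, unlike the other one, which genuinely needs \cref{main-strategy-result}. The paper's citation is shorter but hides this distinction. Your version makes the proposition self-contained apart from the van Oosten--Pitts theorem. Even that could be replaced by a direct check that $\mathbf{0}' \leq \mathtt{Cofinite}$, as the paper notes.
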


(For context, we recall that
by \cite[theorem 2.2]{vanOostenPittsOperator}, for Turing degrees
$\mathbf{a}$, we have $\mathbf{a} \leq \mathtt{Cofinite}$ iff
$\mathbf{a}$ is hyperarithmetic.)

\begin{proof}[Proof of \cref{comparison-of-codegree-with-pitts-operator}]
By the result we have just recalled (or by a straightforward
verification), we have $\mathtt{Cofinite} \geq \mathbf{0}'$, where
$\mathbf{0}'$ denotes the degree of the halting set, whereas
$\coT(\mathbf{a}) \not\geq \mathbf{0}'$ by
our \cref{comparison-of-codegree-with-degree}(i).  This implies
$\coT(\mathbf{a}) \not\geq \mathtt{Cofinite}$

In the other direction, we have $\coT(\mathbf{a}) \not\leq
\mathtt{Cofinite}$ by the same
result \cite[prop. 4.17]{LeeVanOosten} (applied to $n=2$) that
we already mentioned under
question \ref{comparison-of-codegree-with-error-degrees}.
\end{proof}

\begin{ques}
To what extent can we prove \cref{reverse-embedding-theorem}
“internally” or “synthetically”?
\end{ques}

(We have seen in examples \ref{example-wlem-sheaf-topos} and
\ref{example-codegree-for-error-one-third} — and immediately preceding
remarks — that it cannot follow from completely formal reasons.
However, it might still be obtainable from arguments valid in certain
realizability toposes.)

\begin{ques}
Does the subtopos of the effective topos $\mathbf{Eff}$ defined by
(the Lawvere–Tierney topology corresponding to) the co-Turing degree
$\coT(\mathbf{0}')$ of the halting set degree $\mathbf{0}'$ have
interesting internal properties?
\end{ques}

We single out this example because it seems to be the most interesting
co-Turing degree.  Morphisms $\mathbf{N} \to \mathbf{N}$ in this topos
are given by computable maps $\N \to \N$ (because of
\cref{comparison-of-codegree-with-degree}(i) taken together
with \cref{remark-on-hom-nat-to-nat}), but it seems that we
are just “one Turing jump away” from obtaining all maps $\N \to \N$,
which is intriguing.

\end{document}